\RequirePackage{fix-cm}
\documentclass[smallextended]{svjour3}       
\smartqed  
\usepackage{graphicx}
\graphicspath{{./Figures/}}
\usepackage{latexsym,amsmath,amssymb,amsthm,amsfonts}
\usepackage{comment}
\usepackage{algorithm,algpseudocode}
\usepackage{xcolor}
\usepackage{subcaption}
\usepackage{amsfonts}
\usepackage{tcolorbox}
\usepackage[toc]{appendix}
\usepackage{bm}
\allowdisplaybreaks
\usepackage{float}
\usepackage{tikz}
\usepackage{amsthm}
\usepackage{hyperref}

\newtheorem{thm}{Theorem}
\newtheorem{prop}[thm]{Proposition}
\newtheorem{lem}[thm]{Lemma}

\newtheorem{defnition}[thm]{Definition}

\newcommand{\Rs}{\mathbb R}

\newcommand{\calA}{{\mathcal A}}

\newcommand{\calE}{{\mathcal E}}
\newcommand{\calF}{{\mathcal F}}

\newcommand{\calG}{{\mathcal G}}

\renewcommand{\a}{\mbox{\boldmath $a$}}

\newcommand{\one}{\mbox{\boldmath $1$}}
\newcommand{\zero}{\mbox{\boldmath $0$}}
\newcommand{\e}{\mbox{\boldmath $e$}}
\newcommand{\f}{\mbox{\boldmath $f$}}
\newcommand{\g}{\mbox{\boldmath $g$}}
\newcommand{\h}{\mbox{\boldmath $h$}}

\newcommand{\n}{\mbox{\boldmath $n$}}
\newcommand{\x}{\mbox{\boldmath $x$}}
\newcommand{\y}{\mbox{\boldmath $y$}}
\renewcommand{\u}{\mbox{\boldmath $u$}}
\renewcommand{\v}{\mbox{\boldmath $v$}}
\newcommand{\z}{\mbox{\boldmath $z$}}
\newcommand{\w}{\mbox{\boldmath $w$}}
\newcommand{\q}{\mbox{\boldmath $q$}}
\newcommand{\G}{\mbox{\boldmath $G$}}

\newcommand{\bxi}{\mbox{\boldmath $\xi$}}

\definecolor{green}{rgb}{0.0, 0.5, 0.0}

\newcommand{\Tr}{\ensuremath{^{\mr{T}}}} 
\newcommand{\mr}[1]{\ensuremath{\mathrm{#1}}} 
\newcommand{\fnc}[1]{\ensuremath{\mathcal{#1}}} 
\newcommand{\mat}[1]{\ensuremath{\mathsf{#1}}} 

\DeclareMathOperator{\diag}{diag} 

\newcommand{\DoneD}[1]{\ensuremath{\overline{\mat{D}}}_{#1}}
\newcommand{\QoneD}[1]{\ensuremath{\overline{\mat{Q}}}_{#1}}
\newcommand{\PoneD}[1]{\ensuremath{\overline{\mat{P}}}_{#1}}

\newcommand{\EoneD}[1]{\ensuremath{\overline{\mat{E}}}_{#1}}

\newcommand{\D}[1]{\ensuremath{\mat{D}}_{#1}}

\newcommand{\Q}[1]{\ensuremath{\mat{Q}}_{#1}}
\newcommand{\Pnorm}[1]{\ensuremath{\mat{P}}_{#1}}
\newcommand{\E}[1]{\ensuremath{\mat{E}}_{#1}}

\newcommand{\R}[1]{\ensuremath{\mat{\mat R}}_{#1}}
\newcommand{\I}[1]{\ensuremath{\mat{I}}_{#1}}

\newcommand{\Jdxildxm}[2]{\ensuremath{J\frac{\partial\xi_{#1}}{\partial x_{#2}}}}

\newcommand{\MJack}[1]{\ensuremath{\left[J\right]_{#1}}}
\newcommand{\MJdxildxmk}[3]{\ensuremath{\left[J\dfrac{\partial\xi_{#1}}{\partial x_{#2}}\right]_{#3}}}

\newcommand{\MSJack}[2]{\ensuremath{\left[\mat{\hat{J}}_{#1}\right]_{#2}}}
\newcommand{\Cmat}[2]{\ensuremath{\left[\mat{C}_{#1}\right]_{#2}}}
\newcommand{\Normal}[1]{\ensuremath{\left[\mat{\zeta}_{#1}\right]}}

\journalname{Journal of Scientific Computing}
\begin{document}

\title{A finite-difference summation-by-parts conditionally stable partitioned algorithm for conjugate heat transfer problems
}

\titlerunning{A FD-SBP-SATs conditionally stable partitioned algorithm for CHT problems}  
\author{ Sarah Nataj  \and
	     David C. Del Rey Fern\'andez \and
	     David Brown \and
	     Rajeev Jaiman
}


\institute{
	Submitted to Journal of Scientific Computing, May 2025\\
	\\
	Sarah Nataj \at
              Department of Applied  Mathematics, University of Waterloo, Waterloo, ON, Canada  \\
              \email{sarah.nataj@gmail.com}             \\
              \emph{Present address: Institute of Mechanical and Electrical Engineering, University of
              	Southern Denmark, Odense, Denmark} 
           \and
           David C. Del Rey Fern\'andez \at
           Department of Applied  Mathematics, University of Waterloo, Waterloo, ON, Canada \\
           \email{ddelreyfernandez@uwaterloo.ca}
           \and
           David Brown \at
           Ansys Canada Inc., Waterloo, ON, Canada \\
           \email{david.brown@ansys.com}
           \and
           Rajeev Jaiman \at
           Department of Mechanical Engineering, the University of British Columbia, Vancouver, BC, Canada\\
           \email{rjaiman@mech.ubc.ca}
}


\maketitle

\begin{abstract}
In this work, we design and analyze a novel, provably conditionally stable weakly coupled partitioned scheme to solve the conjugate heat transfer (CHT) problem. We consider a model CHT problem consisting of linear advection–diffusion and heat equations, coupled at an interface through the continuity of temperature and heat flux. We employ high-order summation-by-parts finite-difference operators in conjunction with simultaneous-approximation-terms (SATs) in curvilinear coordinates for spatial derivatives, combined with first- and second-order time discretization, and extrapolation in time at the interface. Energy stability is maintained by carefully defining SAT parameters at the interface. A range of coupling parameters are explored to identify those that yield a stable scheme and a step-wise approach for choosing SAT parameters that result in stability is given. The effectiveness of the method is demonstrated through numerical experiments in a two-dimensional model problem on rectangular domain with curvilinear grids. The proposed approach enables the development of high-order conditionally-stable partitioned solvers suitable for general geometries.

\keywords{conjugate heat transfer \and multi-physics problems \and summation-by-parts \and simultaneous-approximation-terms \and numerical stability \and  weakly coupled partitioned schemes  }

\subclass{65M06
\and 65M12
\and 65Z05 
\and 80A19 
}
\end{abstract}

\section{Introduction}

Conjugate heat transfer (CHT) problems are examples of multi-physics problems that describe the thermal interaction between two media and have wide-ranging application, including aerospace, nuclear engineering, and biology \cite{ELKaramany2003,Lee2003,Zeng2020,Cang2022}.
Developing efficient and accurate solvers for CHT problems depends on careful discretization of the interface conditions between the fluid and solid/fluid, i.e., the continuity of temperature, heat flux, and possibly discontinuities in the thermal properties of the materials at the interface. The main objective of this work is to develop a provably energy-stable partitioned algorithm for a model transient heat conduction problem defined on two general subdomains with distinct thermal properties. We propose a novel high-order in space partitioned scheme that employs separate solvers for each subdomain and enforces the interface conditions using a penalty method to ensure conditional stability.

Numerical methods for solving multi-physics problems can be broadly categorized into monolithic and partitioned approaches \cite{Jaiman2022}. Monolithic methods consider the equations describing the physics in the whole domain as a single system. While this approach can be computationally efficient, it requires development of a specific solver for each coupled problem. In contrast,  a partitioned approach employs separate solvers in each subdomain which are sequentially solved until convergence is achieved, and thereby enable coupling existing single physics solvers to solve multi-physics problems \cite{Banks2014I,Banks2014II}. Partitioned schemes use two main coupling strategies: strongly coupled and weakly (or loosely) coupled approaches. Strongly coupled schemes perform multiple iterations per time step, solving the coupled equations until the interface conditions meet a specified tolerance. On the other hand, weakly coupled schemes rely on only one or a small fixed number of iterations \cite{Felippa2001}. Despite their flexibility, partitioned schemes often encounter stability issues for certain multi-physics problems \cite{Causin2005}.

The numerical stability of the CHT problem is discussed in the literature mostly through the Godunov–Ryabenkii normal-mode analysis 
\cite{Giles1997,Henshaw2009,Joshi2014,Roe2008,Meng2017}.
Giles in \cite{Giles1997} employed a traditional Dirichlet-Neumann coupling approach, while in \cite{Henshaw2009,Joshi2014} authors applied a mixed (Robin) interface condition. Roe et al. \cite{Roe2008} employed combined interface boundary conditions  to solve a coupled system of two thermal diffusion problems with a moving interface. Using a modal analysis, they established a conditional stability criterion. When the time step is chosen to satisfy a condition analogous to the Courant–Friedrichs–Lewy (CFL) condition, the overall stability of the coupled system depends on the ratio of the spatial resolutions (or mesh sizes) between the subdomains.  Meng et. al. in \cite{Meng2017} introduced  the CHT advanced multi-domain partitioned  scheme, where they determined the weights in the Robin condition through an optimization process based on a condition derived from the local stability analysis of the coupling scheme. They showed that the proposed scheme for coupled heat equations is second-order accurate and stable for the vast majority of practical problems. This is achieved by applying a second-order difference operator in space and a second-order backward-difference formula  in time, combined with a third-order extrapolation in time  at the interface.

CHT and fluid structure interaction (FSI) problems are closely related in many engineering applications, as both involve coupled fluid and solid domains at an interface. While CHT primarily involves thermal interactions between fluid and solid, FSI incorporates kinematic and dynamic conditions at interface. Stability of partitioned schemes for FSI problems has been widely  studied \cite{Banks2014I,Banks2014II,Fernandez2009,Banks2011,Hou2012,Banks2016}.

Of particular interest in this work  is the development of schemes based on high-order accurate operators in space that follow the summation-by-parts (SBP) property \cite{Strand1994,DelRey2014,Svard2014}. SBP operators, in conjunction with simultaneous approximation terms (SATs) \cite{Carpenter1994,Carpenter1999,Nordstrom1999,Mattsson2003}, have wide applications in fluid dynamics simulations and wave propagation problems \cite{Mattsson2006,Hicken2008,Kozdon2013,Carpenter2014}.  By replicating integration-by-parts at the discrete level, SBP-SAT schemes enforce boundary conditions weakly, providing a systematic framework for constructing energy-stable numerical methods, see the review papers  \cite{DelRey2014,Svard2014}.
Moreover, the SBP-SAT approach can be applied in the context of complex domains with curved boundaries through careful construction of SBP operators that mimic integration by parts \cite{Nordstrom2001}. This can be accomplished using a skew-symmetric splitting of the derivative operator (i.e., an average of a conservative and non-conservative form) and to maintain conservation, the metric terms are approximated such that they satisfy the discrete metric invariants \cite{Crean2018,Lundquist2018,DelRey2019,Nolasco2020}. The SBP-SAT approach has previously been used successfully to construct provably stable high-order discretizations of the CHT and FSI problems using a monolithic approach  \cite{Lundquist2018,Lindstrom2010,Carpenter2010,Gong2011,Nordstrom2013,Sjogreen2013,Ghasemi2017}. In contrast to these monolithic formulations, in the present work we introduce a partitioned strategy within the SBP-SAT framework and provide an energy-based stability analysis.

In this work, we focus on a model transient CHT problem, which involves solving the heat conduction equation in solids coupled with the convective heat transfer equation for fluids. An example of such a problem is the design of an electronic cooling system \cite{Zeng2020,Sun2024}. 
The SBP-SAT discretization employed in this work is specifically designed for general curvilinear grids, enabling flexibility and applicability to complex geometries typically encountered in practical engineering applications.  We propose a provably conditionally-energy-stable partitioned scheme that utilizes high-order spatial SBP differentiation operators in curvilinear coordinates. The interface conditions are enforced using a combination of SATs, ensuring the conditional stability of the scheme. We show that the proposed scheme, employing first-order backward Euler time discretization, is energy stable for a range of penalty parameters related to the SATs at the interface, along with a CFL-like relationship between the spatial and temporal grid spacings. Additionally, we demonstrate stability when a second-order extrapolation in time is applied at the interface. We further validate the conditional stability using a second-order time discretization. The proposed scheme is weakly coupled, applying a fixed number of iterations at each time level, independent of any convergence criteria. However, to improve the accuracy of the method, we observe from numerical results that sub-iterations are necessary to achieve the same error as a monolithic scheme. The main novelty of this work is the combination of a curvilinear SBP--SAT construction with a provably conditionally stable partitioned coupling for the CHT problem. In particular, the main contributions of this article are:

\begin{itemize}
	\item[$-$] We establish conditional energy stability for a weakly coupled partitioned scheme using high-order finite difference SBP operator for spatial derivatives, first- and second-order method for temporal derivatives and first- and second order extrapolation in time at the interface. 
	
	\item[$-$] SATs are carefully designed at the interface to ensure stability within the partitioned framework. This novel application is distinct from prior works focused on monolithic SBP-SAT schemes in multi-physics application. Moreover, we provide  a step-wise approach for choosing SAT parameters that result in stability.
	
	\item[$-$] SBP operators are constructed in curvilinear coordinate system, extending their applicability to complex geometries and domains in multi-physics applications.
\end{itemize}

An outline of the remainder of this paper is as follows. In Section \ref{sec:CHT}, we present the model CHT problem, discuss its well-posedness and provide a continuous stability estimate. In Section~\ref{sec:1dCart}, we develop the SBP--SAT discretization in a one-dimensional (1D) Cartesian setting. We describe both monolithic and partitioned interface couplings, and we derive discrete energy estimates that yield conditional stability for the corresponding 1D schemes. In Section~\ref{sec:3dCurv}, we extend the SBP--SAT framework to three-dimensional (3D) curvilinear grids and formulate the corresponding partitioned scheme. We derive conditional discrete stability estimates and discuss higher-order extensions in time for temporal discretization and extrapolation at interface. Section~\ref{sec:Numeric} offers some numerical experiments that validate our theoretical results in a two-dimensional (2D) spatial domain. Finally, in Section~\ref{sec:Summary}, we draw conclusions. For readability, several technical proofs are deferred to the appendices.

\section{The continuous system and well-posedness}\label{sec:CHT}

Assume $\Omega_L,\, \Omega_R\subset\Rs^d$,  where $d=1, 2, 3$,  denote the  two domains  under consideration, with boundaries $\partial \Omega_L$ and $\partial\Omega_R$. Let $\n_L$ and $\n_R$ denote the outward unit normal vectors for $ \partial \Omega_L$ and $\partial \Omega_R$, respectively.  Let $\Sigma:=\partial \Omega_L\cap\partial\Omega_R$ denote the common interface between the two domains, where the two PDEs are interacting. 
Figure \eqref{diag:domain} presents the 2D domain that we consider for a model CHT problem.

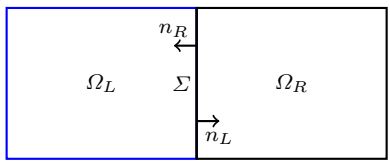
\begin{figure}[ht]
	\centering
	\begin{tikzpicture}[scale=0.5] 
		\draw[blue, thick] (-5, -2) rectangle (0, 2); 
		\draw[black, thick] (0, -2) rectangle (5, 2); 
		\draw[thick, black] (0, -2) -- (0, 2);
		
		\node at (-2.5, 0) { $\Omega_{L}$};
		\node at (2.5, 0) { $\Omega_{R}$};  
		
		\node at (-5.4, 0) {}; 
		\node at (6, 0) {};  
		
		\draw[->, thick] (0, 1) -- (-0.6, 1);
		\node at (-0.6, 1.4) {$n_R$};
		
		\draw[->, thick] (0, -1) -- (0.6, -1); 
		\node at (0.6, -1.45) {$n_L$};
		
		\node at (-0.4, 0) { $\Sigma$}; 
	\end{tikzpicture}
	\caption{Domain representation for the model CHT problem in 2D.}
	\label{diag:domain}
\end{figure}
Our model CHT problem consists of a linear advection-diffusion equation, in the left subdomain, coupled with a heat equation in the right subdomain. The advection term is included only in the left subdomain to model a fluid (convection--diffusion) region, while the right subdomain models a solid (pure conduction) region. This choice allows us to assess the impact of convection on the interface coupling in a simplified CHT setting.  These equations can be used to model transient of heat  between a fluid flow and solid-heated plates in electronic cooling system or air-cooled  heat sinks \cite{Zeng2020,Sun2024}. 

The left problem is given as   
\begin{align}\label{eq03a}
	\begin{aligned}
		\fnc{W}_t+\a\cdot \nabla \fnc{W}&=\epsilon \Delta \fnc{W}, &\quad &\bm{x}:=(x_1,\dots,x_d)\in \Omega _L,\quad 0\le t\le T,\\
		\zeta \fnc{W}+\epsilon \n_L\cdot \nabla \fnc{W}&=g(\bm{x}, t),&\quad &\bm{x}\in\partial \Omega_L\backslash\Sigma,\quad 0\le t\le T,\\\fnc{W}(\bm{x},0)&=q_L(\bm{x}),&\qquad &\bm{x}\in \Omega_L,
	\end{aligned}
\end{align}
where $\fnc{W}(\cdot, t)\in H^2(\Omega_L)$  for each $t\in[0,T]$ is the temperate in the fluid and $\a\in\Rs^d$ is a non-zero given constant advective velocity. Furthermore, $\epsilon>0$ is the thermal conductivity of fluid  and $\zeta$ is a constant. Moreover, $g(\bm{x},t)$ is the given boundary function, and  $q_L(\bm{x})$ is the given initial condition. On  the right subdomain we have that
\begin{align}\label{eq03b}
	\begin{aligned}
		\fnc{V}_t&=\kappa \Delta \fnc{V},&\quad &\bm{x}\in \Omega _R,\quad 0\le t\le T,\\
		\fnc{V}+\varphi \n_R\cdot \nabla \fnc{V}&=h(\bm{x},t),&\qquad &\bm{x}\in\partial \Omega_R\backslash\Sigma,\quad 0\le t\le T,\\
		\fnc{V}(\bm{x},0)&=q_R(\bm{x}),&\qquad &\bm{x}\in \Omega_R,
	\end{aligned}
\end{align}
where $\fnc{V}(\cdot, t)\in H^2(\Omega_L)$  is the temperature of the solid for each $t\in[0,T]$, $\kappa>0$ is the thermal conductivity of solid, and $h(\bm{x})$ is the boundary function, $q_R(\bm{x})$ is the initial condition and $\varphi$ is a given non-zero constant. We assume that these PDEs have been non dimensionalized.  These PDEs are coupled at $\Sigma$ with interface conditions:
\begin{align}\label{eq03c}
	\begin{aligned}
		\fnc{W}&=\fnc{V},\qquad\qquad\quad\;\quad\mbox{on}\;\, \Sigma,\\\epsilon \,\n_L\cdot\nabla \fnc{W}&=-\kappa\, \n_R\cdot\nabla \fnc{V},\qquad\mbox{on}\;\, \Sigma.
	\end{aligned}
\end{align}

To ensure a meaningful physical interpretation of the interface condition,
the analysis is restricted to purely tangential advection, i.e.\ it is assumed that
$\a\cdot\n_{L}=0$ at the interface. This assumption is required for the continuous
energy estimate, since it 
eliminates the interface flux term in the continuous energy identity. 
If instead $\a\cdot\n_{L}\neq 0$, the interface becomes direction-dependent (inflow/outflow), and a stable coupling generally requires a characteristic (upwind) interface treatment.  For further discussion of interface procedures for coupling advection--diffusion and heat equations in one dimension, we refer to \cite{Lindstrom2010,Gong2011}.

For a given $\Omega\in\Rs^d$, define the inner product   $(\fnc{U},\fnc{V})_\Omega:=\int_{\Omega} \fnc{U}\, \fnc{V} \,dx$  for $\fnc{U}$ and $\fnc{V}\in L^2(\Omega)$ and the associated $L^2$-norm $\|\fnc{U}\|_{\Omega}^2:=\int_{\Omega} \fnc{U}^2 dx$.  It can be shown that the PDEs on the left and right subdomains, together with the coupling conditions, yield a well-posed problem satisfying the differential inequality
\[
\frac{d}{dt}E(t)+\epsilon\|\nabla \fnc{W}(t)\|^2_{\Omega_L}+\kappa\|\nabla \fnc{V}(t)\|^2_{\Omega_R}
\le \mathcal{B}(t),
\]
where the energy is defined by
\[
E(t):=\tfrac12\|\fnc{W}(t)\|^2_{\Omega_L}+\tfrac12\|\fnc{V}(t)\|^2_{\Omega_R},
\]
and the (time-dependent) boundary-data term is
\[
\mathcal{B}(t):=\frac{1}{4\alpha}\|\mathcal{G}(t)\|^2_{\partial\Omega_L\setminus\Sigma}
+\frac{\beta}{4}\|\mathcal{H}(t)\|^2_{\partial\Omega_R\setminus\Sigma},
\qquad
\|\mathcal{G}(t)\|^2_{\partial\Omega_L\setminus\Sigma}:=\int_{\partial\Omega_L\setminus\Sigma} g(x,t)^2\,ds,
\]
and similarly for $\|\mathcal{H}(t)\|^2_{\partial\Omega_R\setminus\Sigma}$. Here we assume that $\a\cdot\n_L= 0$ on the interface, $\alpha=\big(\dfrac12\a\cdot \n_L+\zeta\big)>0$ and $\beta=\dfrac{\kappa}{\varphi}>0$. Therefore, for any $\zeta>-\dfrac12\a\cdot \n_L$ and $\varphi>0$, we obtain the energy estimate
\[
E(t)\le E(0)+\int_0^t \mathcal{B}(\tau)\,d\tau,
\qquad
E(0)=\tfrac12\|q_L\|^2_{\Omega_L}+\tfrac12\|q_R\|^2_{\Omega_R}.
\]
For simplicity in exposition, we assume $\zeta=\dfrac12\big(|\a\cdot \n_L|-\a\cdot \n_L\big)$ and $\varphi=\kappa$. See Appendix~\ref{apdA} for more details.

\textit{\emph{\bf Note 1.}} The unweighted choice in the definition of the energy $E(t)$ (equal weights on $\Omega_L$ and $\Omega_R$) is essential for the energy argument, since it yields exact cancellation of the interface contributions. If one instead considers a weighted energy
\[
E_{\theta}(t)=\theta_L\|\fnc{W}\|^2_{\Omega_L}+\theta_R\|\fnc{V}\|^2_{\Omega_R},\qquad 
\theta_L,\theta_R>0,\qquad \theta_L+\theta_R=1,
\]
then the corresponding energy identity contains an additional interface term proportional to $\theta_L-\theta_R$, which does not vanish unless $\theta_L=\theta_R$. Consequently, the weighted energy analysis may fail to yield a coercive stability estimate unless the interface coupling is modified.

\section{1D Cartesian SBP–SAT formulation, monolithic and partitioned interface coupling}\label{sec:1dCart}
The SBP--SAT framework provides a foundation for designing high-order accurate, conservative, and stable numerical schemes for PDEs. In this section, we present a full discretization of the coupled CHT model problem \eqref{eq03a}--\eqref{eq03c} in a 1D Cartesian setting.  A key contribution of this work is the design of interface SAT terms that lead to a  conditionally stable partitioned coupling. Working in 1D Cartesian coordinates enables us to present the scheme and the stability argument in their simplest form, making the interface SAT mechanism and the role of the penalty parameters transparent without additional geometric notation.

We proceed as follows. In Section~\ref{subsec:alg1D} we construct the monolithic and partitioned SBP--SAT discretizations for the model CHT problem and define the corresponding boundary and interface SAT terms. In Section~\ref{subsec:Sta1D} we prove discrete stability estimates for both couplings. The results in this section serve as the template for the higher-dimensional formulation in complex geometry and analysis presented later in the paper.

\subsection{1D discretization and interface coupling using SBP-SATs}\label{subsec:alg1D}
To obtain a fully discrete scheme for the time-dependent PDEs, we employ SBP differentiation operators to discretize the spatial derivatives, and a first-order backward finite-difference scheme for the time discretization.
Assume $x^{(1)},x^{(2)},\ldots, x^{(N)}$ are grid points defined on the 1D domain
$\Omega=(\alpha,\beta)$, where $N$ is the number of nodes and $x^{(1)}=\alpha$ and $x^{(N)}=\beta$. 
The definition of a 1D SBP operator $\DoneD{x}$ approximating the first derivative is defined as follows,
\begin{definition}\label{Def:SBP1D}
A matrix operator $\DoneD{x}\in\Rs^{N\times N}$ is a  SBP operator of degree $p$
approximating the first derivative $\frac{\partial}{\partial x}$ on $x\in[\alpha,\beta]$ with vectors of nodes $\x=[x^{(1)},\dots,x^{(N)}]^T$, if
\begin{enumerate}
  \item  
  $\DoneD{x}\,\x^{\,r}=r\,\x^{\,r-1}$ for $r=1,2,\ldots,p$, where
  $\x^{\,r}:=[(x^{(1)})^r,\dots,(x^{(N)})^r]^T$, and $\DoneD{x}\,\bm{1}=\bm{0}$, where $\one$ is a vector of ones and  $\zero $ is a vector of zeroes of  size of the number of nodes in the domain.
  \item  $\DoneD{x}=\PoneD{x}^{-1}\QoneD{x}$ with $\PoneD{x}$ symmetric positive definite;
  \item  $\QoneD{x}+\QoneD{x}^T=\EoneD{x}$, where
  \[
    \EoneD{x}=\diag(-1,0,\ldots,0,1)
    =\e_{\beta}^T\e_{\beta}-\e_{\alpha}^T\e_{\alpha},
  \]
with $\e_{\alpha},\e_{\beta}\in\Rs^{1\times N}$ given by
$\e_{\alpha}=[1,0,\ldots,0]$ and $\e_{\beta}=[0,\ldots,0,1]$.

\end{enumerate}
\end{definition}
Thus, a degree $p$ SBP operator exactly differentiates monomials up to degree $p$ on the nodal set \cite{DelRey2014}. Given symmetric positive definite matrix $\PoneD{x}$, we define
$\langle \x,\y\rangle_{\PoneD{x}}:=\x\Tr \PoneD{x}\,\y$ and the associated squared $\PoneD{x}$-norm by
$\|\x\|_{\PoneD{x}}^2:=\x\Tr \PoneD{x}\,\x$.

For any $t\in\Rs$, assume $\fnc{U}(\cdot,t),\fnc{V}(\cdot,t)\in H^{1}(\Omega)$. 
The continuous integration-by-parts identity in one dimension reads
\begin{align*}
\begin{aligned}
\int_{\alpha}^{\beta}\fnc{U}(x)\,\frac{\partial \fnc{V}}{\partial x}(x)\,dx
+\int_{\alpha}^{\beta}\fnc{V}(x)\,\frac{\partial \fnc{U}}{\partial x}(x)\,dx
=
\fnc{U}(\beta)\fnc{V}(\beta)-\fnc{U}(\alpha)\fnc{V}(\alpha).
\end{aligned}
\end{align*}
Define the grid function $\u$ on the $N$ nodes by
\[
\u(t) := \big[\fnc{U}(x^{(1)},t),\,\fnc{U}(x^{(2)},t),\,\ldots,\,\fnc{U}(x^{(N)},t)\big]\Tr,
\]
and define $\v(t)$ in the same way from $\fnc{V}(\cdot,t)$. 
SBP operators mimic integration by parts. To see this at the discrete level, note that discretizing the left-hand side using $\PoneD{}$ and $\DoneD{x}$ yields
\begin{align*}
\begin{aligned}
\u\Tr\PoneD{}\DoneD{x}\v+\v\Tr\PoneD{}\DoneD{x}\u
=\u\Tr\QoneD{x}\v+\v\Tr\QoneD{x}\u=\u\Tr\QoneD{x}\v+\u\Tr\QoneD{x}\Tr\v=\u\Tr\big(\QoneD{x}+\QoneD{x}\Tr\big)\v
=\u\Tr\EoneD{x}\v,
\end{aligned}
\end{align*}

For the time discretization in multi-physics applications, implicit methods like the backward Euler method can be advantageous due to their broader stability region compared to explicit methods. This advantage ensures more reliable and stable simulations of the interactions between fluid dynamics and structural responses, see \cite{Meng2017}  and references therein for more detail. Let $\{t_k\}$ denote the grid points for time and assume an equal time step for all 
$0\le k\le N_t$, given by $\delta t=t_{k+1}-t_k$.  
A first-order backward finite-difference scheme (Backward Euler, BE) is given by
\[
\frac{d\u}{dt}(t_{k+1})\approx\partial_{\delta t}\u^{k+1}:=\frac{\u^{k+1}-\u^k}{\delta t},
\]
where $\u^{k+1}:=\u(t_{k+1})$ and $\u^{k}:=\u(t_{k})$. 

Next we introduce the fully discretized scheme for the coupled PDEs. We consider the 1D Cartesian setting of the equations \eqref{eq03a}-\eqref{eq03c} on the interval $[\alpha,\beta]$, split into two subdomains
$\Omega_L:=(\alpha,\sigma)$ and $\Omega_R:=(\sigma,\beta)$, where $\alpha<\sigma<\beta$.
The (constant) advection constant satisfies $a\ge 0$, and we define
\(c:=\zeta+\frac{a}{2},\) with \(\zeta\) chosen so that $c>0$. 
For simplicity, we assume that the left and right subdomains are discretized by 1D grids
$\{x_{L}^{(i)}\}_{i=1}^N \subset \Omega_{L}$ and $\{x_{ R}^{(i)}\}_{i=1}^N \subset \Omega_{ R}$,
using the same differentiation matrix $\DoneD{x}$ and associated weight matrix $\PoneD{x}$. 
With this convention, the boundary selector vectors $\e_{\alpha}=[1,0,\ldots,0]$ and $\e_{\beta}=[0,\ldots,0,1]$ can be used consistently on both subdomains to extract the boundary points. The boundary data in \eqref{eq03a}--\eqref{eq03b} are denoted by $g(x,t)$ at $x=\alpha$ and $h(x,t)$ at $x=\beta$.
In the fully discrete scheme, we store the boundary data as grid vectors
\[
\g^{i}:=\big[g(x_L^{(1)}, t_{i}),\ldots, g(x_L^{(N)}, t_{i})\big]^T,
\qquad
\h^{i}:=\big[h(x_R^{(1)}, t_{i}),\ldots, h(x_R^{(N)}, t_{i})\big]^T,\qquad i=0,\dots, N_t
\]
Hence, $\e_{\alpha}\g^i$ and $\e_{\beta}\h^i$ extract the boundary entries
$g(x_L^{(1)},t_i)$ and $h(x_R^{(N)},t_i)$, respectively.
The initial data are given by $q_L(x)$ on $\Omega_L$ and $q_R(x)$ on $\Omega_R$, and we set the initial grid vectors
\[
\q_{\mat L}:=\big[\fnc{W}(x_L^{(1)},0),\ldots,\fnc{W}(x_L^{(N)},0)\big]^T,\qquad
\q_{\mat R}:=\big[\fnc{V}(x_R^{(1)},0),\ldots,\fnc{V}(x_R^{(N)},0)\big]^T.
\]

Then the coupled SBP-SAT scheme is defined as follow. For given initial guess for $\v^*$ and $\w^*$ and  given SAT parameters $\gamma_1,\gamma_2\ge0$, the following iterative procedure is applied:
\begin{itemize}
\item solve the left sub-problem: given $\v^*$, find $\w^{k+1}$ such that
\begin{align}\label{eq82}
\begin{aligned}
\partial_{\delta t} \w^{k+1}
+a\DoneD{x}\w^{k+1}
+\gamma_1\,{\mat S}^{{\mat L},1}_{\sigma}
+\gamma_2\,{\mat S}^{{\mat L},2}_{\sigma}
&=\epsilon\,\DoneD{x}^2\w^{k+1}+{\mat S}_{\alpha}^{\mat L}.
\end{aligned}
\end{align}
where ${\mat S}_{\alpha}^{\mat L}$ weakly imposes the physical boundary condition in~\eqref{eq03a}:
 \begin{align*}
\begin{aligned}
{\mat S}_{\alpha}^{\mat L}
=-\PoneD{}^{-1}\e_{\alpha}\Tr\e_{\alpha}\big(\zeta\,\w^{k+1}-\epsilon\, \DoneD{x}\w^{k+1}- \g^{k+1}\big),
\end{aligned}
\end{align*}
and ${\mat S}^{{\mat L},1}_{\sigma}$ and ${\mat S}^{{\mat L},2}_{\sigma}$ weakly impose interface conditions~\eqref{eq03c},
\begin{align*}
\begin{aligned}
{\mat S}^{{\mat L},1}_{\sigma}
&=\PoneD{}^{-1}\e_{\beta}\Tr\big(\e_{\beta}\w^{k+1}-\e_{\alpha}\v^{*}\big),\\
{\mat S}^{{\mat L},2}_{\sigma}
&=\epsilon\,\PoneD{}^{-1}\DoneD{x}\Tr\e_{\beta}\Tr\big(\epsilon\, \e_{\beta}\DoneD{x}\w^{k+1}-\kappa\, \e_{\alpha}\DoneD{x}\v^*\big).
\end{aligned}
\end{align*}  
\item update the interface data $\w^*$.
\item solve the right sub-problem: given $\w^*$, find $\v^{k+1}$ such that
\begin{align}\label{eq83}
\begin{aligned}
\partial_{\delta t} \v^{k+1}
+\gamma_1\,{\mat S}^{{\mat R},1}_{\sigma}
+\gamma_2\,{\mat S}^{{\mat R},2}_{\sigma}
+{\mat S}^{{\mat R},3}_{\sigma}
&=\kappa\, \DoneD{x}^2\v^{k+1}+{\mat S}_{\beta}^{\mat R}.
\end{aligned}
\end{align}
where  ${\mat S}_{\beta}^{\mat R}$ weakly imposes the  physical boundary condition in~\eqref{eq03b}:
\begin{align*}
\begin{aligned}
{\mat S}_{\beta}^{\mat R}
=-\PoneD{}^{-1}\e_{\beta}\Tr\e_{\beta}\big(\v^{k+1}+\kappa\, \DoneD{x}\v^{k+1}-\h^{k+1}\big),
\end{aligned}
\end{align*}
and ${\mat S}^{{\mat R},1}_{\sigma}$, ${\mat S}^{{\mat R},2}_{\sigma}$, and ${\mat S}^{{\mat R},3}_{\sigma}$ weakly impose interface conditions~\eqref{eq03c},
\begin{align*}
\begin{aligned}
{\mat S}^{{\mat R},1}_{\sigma}
&=\PoneD{}^{-1}\e_{\alpha}\Tr\big(\e_{\alpha}\v^{k+1}-\e_{\beta}\w^*\big),\\
{\mat S}^{{\mat R},2}_{\sigma}
&=\kappa\,\PoneD{}^{-1}\DoneD{x}^T\e_{\alpha}\Tr\big(\kappa\, \e_{\alpha}\DoneD{x}\v^{k+1}-\epsilon\,\e_{\beta}\DoneD{x}\w^*\big),\\
{\mat S}^{{\mat R},3}_{\sigma}
&=\PoneD{}^{-1}\e_{\alpha}\Tr\big(\epsilon\e_{\beta}\DoneD{x}\w^*-\kappa\e_{\alpha}\DoneD{x}\v^{k+1}\big).
\end{aligned}
\end{align*}
\item update the interface data $\v^*$.
\end{itemize}
Note that the SATs and penalty parameters  at the interface are precisely defined to ensure conditional stability in the partitioned scheme. If we set $\v^* := \v^{k+1}$ and $\w^* := \w^{k+1}$, the scheme corresponds to the monolithic approach; on the other hand, if we set for example $\v^* := \v^{k}$ and $\w^* := \w^{k+1}$, the scheme corresponds to the partitioned approach (without extrapolation in time at interface).

\subsection{Energy stability of the discrete 1D monolithic and partitioned scheme}\label{subsec:Sta1D}
In the subsequent stability proof in 1D the case, we use the following local discrete trace inequality.
\begin{lem}[Discrete trace inequality in 1D]\label{lemTrace1D}
Let $\,\PoneD{}=\diag(\hat p_1,\ldots,\hat p_{N_x})$ with $\hat p_i>0$.
Define\, \[\rho:=\min_{1\le i\le N_x}\{\hat p_i\}. \]
Then for any grid vector $\z\in\mathbb{R}^{N_x}$,
\begin{align}\label{lemTrace1D:eq}
(\e_{\alpha}\z)^2 \le \frac{1}{\rho}\,\|\z\|^2_{\PoneD{}},
\qquad
(\e_{\beta}\z)^2 \le \frac{1}{\rho}\,\|\z\|^2_{\PoneD{}},
\end{align}
where $\|\z\|^2_{\PoneD{}}:=\z\Tr\PoneD{}\z$.
\end{lem}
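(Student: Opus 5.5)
The plan is a direct, one-line estimate that uses only the diagonal structure of $\PoneD{}$; no earlier SBP property is needed.

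\textbf{Step 1: expand the norm.} Write $\z=[z_1,\ldots,z_{N_x}]\Tr$. Since $\PoneD{}=\diag(\hat p_1,\ldots,\hat p_{N_x})$, the squared norm splits into a weighted sum,
\[
\|\z\|^2_{\PoneD{}}=\z\Tr\PoneD{}\z=\sum_{i=1}^{N_x}\hat p_i z_i^2 .
\]
Every term in this sum is nonnegative because $\hat p_i>0$.

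\textbf{Step 2: bound the boundary entry.} By definition of the selector vectors, $\e_{\alpha}\z=z_1$ and $\e_{\beta}\z=z_{N_x}$. Discarding all but one nonnegative term and then using $\hat p_j\ge\rho$ gives, for either index $j\in\{1,N_x\}$,
\[
\rho\, z_j^2\le \hat p_j z_j^2\le \sum_{i=1}^{N_x}\hat p_i z_i^2=\|\z\|^2_{\PoneD{}} .
\]

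\textbf{Step 3: conclude.} Since $\rho>0$, we may divide by $\rho$. Taking $j=1$ gives the bound on $(\e_{\alpha}\z)^2$, and taking $j=N_x$ gives the bound on $(\e_{\beta}\z)^2$, which together are \eqref{lemTrace1D:eq}.

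There is no real obstacle here. The only point to state explicitly is that $\rho>0$, which holds because it is a minimum of finitely many positive numbers. One could sharpen the constant to $1/\hat p_1$ and $1/\hat p_{N_x}$ respectively, but the uniform constant $1/\rho$ is what the later 1D stability arguments use.
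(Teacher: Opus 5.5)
Your proposal is correct and matches the paper's proof: both expand $\z\Tr\PoneD{}\z=\sum_i \hat p_i z_i^2$, use $\hat p_i\ge\rho$, and keep only the boundary entry. The only difference is order: you discard the other terms first and then bound $\hat p_j\ge\rho$, while the paper applies $\hat p_i\ge\rho$ to every term and then discards.
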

\begin{proof}
Since $\PoneD{}$ is diagonal and positive,
\[
\|\z\|^2_{\PoneD{}}=\sum_{i=1}^{N_x}\hat p_i z_i^2
\ge \rho \sum_{i=1}^{N_x} z_i^2
\ge \rho\, z_1^2
=\rho\,(\e_{\alpha}\z)^2,
\]
which gives $(\e_{\alpha}\z)^2 \le \dfrac{1}{\rho}\|\z\|^2_{\PoneD{}}$. The right boundary estimate follows analogously.
\end{proof}
Notice  that for diagonal-norm SBP operators on a uniform grid with spacing $h$, the minimum of quadrature weights satisfy
$\rho=\mathcal{O}(h)$. In the following, we establish discrete stability for the monolithic coupling.

\begin{thm}[Stability of the monolithic 1D scheme]\label{thm:1D_mono}
Assume  $2\gamma_1\rho\ge\epsilon$.
Consider the scheme \eqref{eq82}--\eqref{eq83} with the \emph{monolithic} choice
$\v^*=\v^{k+1}$ and $\w^*=\w^{k+1}$.
Then, for all $k\ge 0$,
\begin{align*}
\begin{aligned}
\|\w^{k+1}\|^2_{\PoneD{}}
+\|\v^{k+1}\|^2_{\PoneD{}}
\le\;
\|\q_{\mat L}\|^2_{\PoneD{}}
+\|\q_{\mat R}\|^2_{\PoneD{}}
+\sum_{i=1}^{k+1}\left(
\frac{\delta t}{2c}\,(\e_{\alpha}\g^{i})^2
+\frac{\delta t}{2}\,(\e_{\beta}\h^{i})^2\right).
\end{aligned}
\end{align*}
\end{thm}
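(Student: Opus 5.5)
We follow the standard discrete energy method. Take the monolithic choice $\v^*=\v^{k+1}$, $\w^*=\w^{k+1}$. Multiply \eqref{eq82} from the left by $2\delta t\,(\w^{k+1})\Tr\PoneD{}$ and \eqref{eq83} by $2\delta t\,(\v^{k+1})\Tr\PoneD{}$, then add the two identities. For the time derivative we use
\[
2\,\u\Tr\PoneD{}(\u-\u^{k})=\|\u\|^2_{\PoneD{}}-\|\u^{k}\|^2_{\PoneD{}}+\|\u-\u^{k}\|^2_{\PoneD{}}\ \ge\ \|\u\|^2_{\PoneD{}}-\|\u^{k}\|^2_{\PoneD{}}.
\]
So it suffices to show the one-step bound
\[
\|\w^{k+1}\|^2_{\PoneD{}}+\|\v^{k+1}\|^2_{\PoneD{}}\le \|\w^{k}\|^2_{\PoneD{}}+\|\v^{k}\|^2_{\PoneD{}}+\frac{\delta t}{2c}(\e_\alpha\g^{k+1})^2+\frac{\delta t}{2}(\e_\beta\h^{k+1})^2 ,
\]
and then telescope from $0$ to $k$, using $\w^0=\q_{\mat L}$ and $\v^0=\q_{\mat R}$.

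For the spatial terms we use the SBP property $\PoneD{}\DoneD{x}=\QoneD{x}$ with $\QoneD{x}+\QoneD{x}\Tr=\EoneD{x}$. For the advection term this gives $a\,\w\Tr\QoneD{x}\w=\tfrac a2\big((\e_\beta\w)^2-(\e_\alpha\w)^2\big)$. For the diffusion term it gives
\[
\epsilon\,\w\Tr\QoneD{x}\DoneD{x}\w=\epsilon\big(\e_\beta\w\,\e_\beta\DoneD{x}\w-\e_\alpha\w\,\e_\alpha\DoneD{x}\w\big)-\epsilon\|\DoneD{x}\w\|^2_{\PoneD{}},
\]
and similarly for $\kappa\,\v\Tr\QoneD{x}\DoneD{x}\v$.

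\emph{Physical boundaries.} At $x=\alpha$, the SAT ${\mat S}^{\mat L}_\alpha$ cancels the term $\epsilon\,\e_\alpha\w\,\e_\alpha\DoneD{x}\w$. Combined with the advective boundary term, this leaves $-c(\e_\alpha\w)^2+\e_\alpha\w\,\e_\alpha\g$, where the sign bookkeeping of $\zeta$ and $a/2$ is arranged so that the coefficient is $c>0$. Young's inequality then bounds this by $(\e_\alpha\g)^2/(4c)$, which after multiplication by $2\delta t$ yields exactly $\tfrac{\delta t}{2c}(\e_\alpha\g^{k+1})^2$. At $x=\beta$, the SAT ${\mat S}^{\mat R}_\beta$ cancels $\kappa\,\e_\beta\v\,\e_\beta\DoneD{x}\v$ and leaves $-(\e_\beta\v)^2+\e_\beta\v\,\e_\beta\h\le(\e_\beta\h)^2/4$. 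This gives the term $\tfrac{\delta t}{2}(\e_\beta\h^{k+1})^2$.

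\emph{Interface.} Write $J:=\e_\beta\w-\e_\alpha\v$, $y:=\e_\beta\DoneD{x}\w$, $z:=\e_\alpha\DoneD{x}\v$. Note that $\w\Tr\PoneD{}\PoneD{}^{-1}\DoneD{x}\Tr\e_\beta\Tr=y$, which is how the $\gamma_2$ terms collapse. Collecting all interface contributions on the right-hand side, the terms involving $\kappa\,\e_\alpha\v\,z$ cancel thanks to ${\mat S}^{{\mat R},3}_\sigma$. The advective interface term $-\tfrac a2(\e_\beta\w)^2$ is nonpositive and can be dropped. What remains should be
\[
\epsilon\,y\,J-\gamma_1 J^2-\gamma_2(\epsilon y-\kappa z)^2 .
\]
The $\gamma_2$ term is nonpositive, so the only indefinite piece is $\epsilon yJ$. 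This is the main step, and we absorb it into the volume dissipation $-\epsilon\|\DoneD{x}\w\|^2_{\PoneD{}}$. Apply Lemma~\ref{lemTrace1D} to $\DoneD{x}\w$, which gives $y^2\le\rho^{-1}\|\DoneD{x}\w\|^2_{\PoneD{}}$. Then by Young's inequality,
\[
\epsilon yJ\le \epsilon\rho\,y^2+\frac{\epsilon}{4\rho}J^2\le \epsilon\|\DoneD{x}\w\|^2_{\PoneD{}}+\frac{\epsilon}{4\rho}J^2 .
\]
This is dominated by the available negative terms provided $\gamma_1\ge\epsilon/(4\rho)$, which the hypothesis $2\gamma_1\rho\ge\epsilon$ guarantees. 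The dissipation $-\kappa\|\DoneD{x}\v\|^2_{\PoneD{}}$ is simply discarded. This yields the one-step bound, and summation over the time levels completes the proof.

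The delicate points are tracking the signs in the interface algebra, so that the cancellations (especially the $\kappa$ flux terms) come out exactly, and checking the advective boundary sign convention that produces the coefficient $c$.
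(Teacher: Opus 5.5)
Your proof follows the paper's argument essentially step for step. It uses the same energy multiplication and time-difference inequality, the same SBP splitting of the advection and diffusion terms, and the same cancellations at $x=\beta$ and at the interface. Your residual $\epsilon yJ-\gamma_1J^2-\gamma_2(\epsilon y-\kappa z)^2$, after dropping $-\tfrac a2(\e_\beta\w^{k+1})^2$, is exactly the content of \eqref{d10}, and you apply Lemma~\ref{lemTrace1D} to $\DoneD{x}\w^{k+1}$ as the paper does. The one difference is the weighting in Young's inequality. The paper uses $2yJ\le\rho^{-1}J^2+\rho y^2$, which spends only half of the volume dissipation. That is where the hypothesis $2\gamma_1\rho\ge\epsilon$ comes from; the leftover $\delta t\,\epsilon\|\DoneD{x}\w^{k+1}\|^2_{\PoneD{}}$ in \eqref{d11} is not needed for this theorem. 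Your split $\epsilon yJ\le\epsilon\rho y^2+\tfrac{\epsilon}{4\rho}J^2$ spends all of the dissipation and needs only $4\gamma_1\rho\ge\epsilon$, so the stated hypothesis is more than sufficient.

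The step you flagged as delicate and then asserted does not come out as you claim, and the paper's \eqref{d6} makes the same slip.
\begin{itemize}
\item At $x=\alpha$, once ${\mat S}^{\mat L}_\alpha$ has cancelled the $\epsilon$-flux term, the surviving right-hand-side terms are $+\tfrac a2(\e_\alpha\w^{k+1})^2$ from advection and $-\zeta(\e_\alpha\w^{k+1})^2+(\e_\alpha\w^{k+1})(\e_\alpha\g^{k+1})$ from the SAT.
\item The coefficient of $(\e_\alpha\w^{k+1})^2$ is therefore $-(\zeta-\tfrac a2)$, not $-(\zeta+\tfrac a2)$.
\item The correct constant is $c=\zeta-\tfrac a2$. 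This is the discrete counterpart of $\zeta+\tfrac12\a\cdot\n_L$ from Section~\ref{sec:CHT}, since the outward normal at $x=\alpha$ is $-1$. The upwind choice $\zeta=a$ gives $c=a/2$.
\end{itemize}
With $c$ defined this way and assumed positive, your argument is complete.

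With the literal definition $c=\zeta+\tfrac a2$, the bound can fail. Take $\zeta=0$, $g=h=0$ and $\gamma_2=0$. Let $\w^{1}$ be linear from $1$ at $x=\alpha$ to $0$ at $x=\sigma$, and let $\v^{1}=\zero$. Every interface and right-boundary term then vanishes, and the energy rate is $\tfrac a2-\epsilon/(\sigma-\alpha)$, which is positive when $a(\sigma-\alpha)>2\epsilon$. The backward-Euler preimage $(\w^0,\v^0)$ of this state then has strictly smaller energy than $(\w^1,\v^1)$ for small $\delta t$. You should therefore carry out this boundary computation explicitly rather than assume it produces $c$.
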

\begin{proof} 
Left-multiply both side of equation \eqref{eq82} by ${\w^{k+1}}\Tr \PoneD{}$ to obtain
\begin{align}\label{d1}
\begin{aligned}
{\w^{k+1}}\Tr \PoneD{}\,\partial_{\delta t}\w^{k+1}
+a\,{\w^{k+1}}\Tr \PoneD{}\DoneD{x}\w^{k+1}
+\gamma_1\,{\w^{k+1}}\Tr \PoneD{}{\mat S}^{{\mat L},1}_{\sigma}
+\gamma_2\,{\w^{k+1}}\Tr \PoneD{}{\mat S}^{{\mat L},2}_{\sigma} \\
=\epsilon\,{\w^{k+1}}\Tr \PoneD{}\DoneD{x}^2\w^{k+1}
+{\w^{k+1}}\Tr \PoneD{}{\mat S}_{\alpha}.
\end{aligned}
\end{align}
Notice that
\begin{align*}
\begin{aligned}
{\w^{k+1}}\Tr\PoneD{}\partial_{\delta t} \w^{k+1}
=\frac{1}{\delta t}\,{\w^{k+1}}\Tr\PoneD{}(\w^{k+1}-\w^{k})
=\frac{1}{\delta t}\|\w^{k+1}\|^2_{\PoneD{}}
-\frac{1}{\delta t}\,{\w^{k+1}}\Tr\PoneD{}\w^{k}.
\end{aligned}
\end{align*}
Moreover,
\begin{align*}
\begin{aligned}
{\w^{k+1}}\Tr\PoneD{}\w^{k}
=\frac12\Big(\|\w^{k+1}\|^2_{\PoneD{}}+\|\w^{k}\|^2_{\PoneD{}}
-\|\w^{k+1}-\w^{k}\|^2_{\PoneD{}}\Big)
\le \frac12\|\w^{k+1}\|^2_{\PoneD{}}+\frac12\|\w^{k}\|^2_{\PoneD{}}.
\end{aligned}
\end{align*}
Therefore,
\begin{align*}
\begin{aligned}
{\w^{k+1}}\Tr\PoneD{}\partial_{\delta t} \w^{k+1}
&\ge \frac{1}{2\delta t}\|\w^{k+1}\|^2_{\PoneD{}}-\frac{1}{2\delta t}\|\w^{k}\|^2_{\PoneD{}}.
\end{aligned}
\end{align*}
Apply this in \eqref{d1}
\begin{align}\label{d2}
\begin{aligned}
\frac1{2\delta t}\|\w^{k+1}\|^2_{\PoneD{}}
&+a{\w^{k+1}}\Tr\PoneD{}\DoneD{x} \w^{k+1}
+\gamma_1\,{\w^{k+1}}\Tr\PoneD{}{\mat S}^{{\mat L},1}_{\sigma}
+\gamma_2\,{\w^{k+1}}\Tr\PoneD{}{\mat S}^{{\mat L},2}_{\sigma}
\\&\le \frac1{2\delta t}\|\w^{k}\|^2_{\PoneD{}}+\epsilon{\w^{k+1}}\Tr\PoneD{}\DoneD{x}^2\w^{k+1}
+{\w^{k+1}}\Tr\,\PoneD{}{\mat S}_{\alpha}^{\mat L}.
\end{aligned}
\end{align}
Also
\begin{align}\label{d3}
\begin{aligned}
a{\w^{k+1}}^T\PoneD{}\DoneD{x}\w^{k+1}
&=\dfrac{a}2{\w^{k+1}}^T(\QoneD{x}+\QoneD{x}\Tr)\w^{k+1}=\dfrac{a}2{\w^{k+1}}^T\e_{\beta}\Tr\e_{\beta}\w^{k+1}
-\dfrac{a}2{\w^{k+1}}^T\e_{\alpha}\Tr\e_{\alpha}\w^{k+1}.
\end{aligned}
\end{align}
Additionally, it can be shown that 
\(
\PoneD{}\DoneD{x}^2=\EoneD{x}\DoneD{x}-\DoneD{x}\Tr\PoneD{}\DoneD{x},
\)
therefore
\begin{align}\label{d4}
\begin{aligned}
\epsilon{\w^{k+1}}\Tr\PoneD{}\DoneD{x}^2 \w^{k+1}
&=\epsilon{\w^{k+1}}\Tr\Big(\EoneD{x}\DoneD{x}-\DoneD{x}\Tr\PoneD{}\DoneD{x}\Big)\w^{k+1}\\
&=\epsilon{\w^{k+1}}\Tr\e_{\beta}\Tr\e_{\beta}\DoneD{x}\w^{k+1}
-\epsilon{\w^{k+1}}\Tr\e_{\alpha}\Tr\e_{\alpha}\DoneD{x}\w^{k+1}
-\epsilon\|\DoneD{x}\w^{k+1}\|^2_{\PoneD{}}.
\end{aligned}
\end{align}
Substitute \eqref{d3} and \eqref{d4} in \eqref{d2} to obtain
\begin{align}\label{d5}
\begin{aligned}
\frac1{2\delta t}\|\w^{k+1}\|^2_{\PoneD{}}
&+\epsilon \|\DoneD{x}\w^{k+1}\|^2_{\PoneD{}}
+\gamma_1\,{\w^{k+1}}\Tr\PoneD{}{\mat S}^{{\mat L},1}_{\sigma}
+\gamma_2\,{\w^{k+1}}\Tr\PoneD{}{\mat S}^{{\mat L},2}_{\sigma}\\
&\;\le\;
\frac1{2\delta t}\|\w^{k}\|^2_{\PoneD{}}
-\frac{a}2{\w^{k+1}}\Tr\e_{\beta}\Tr\e_{\beta}\w^{k+1}
+\frac{a}2{\w^{k+1}}\Tr\e_{\alpha}\Tr\e_{\alpha}\w^{k+1}\\
&\quad+\epsilon{\w^{k+1}}\Tr\e_{\beta}\Tr\e_{\beta}\DoneD{x}\w^{k+1}
-\epsilon{\w^{k+1}}\Tr\e_{\alpha}\Tr\e_{\alpha}\DoneD{x}\w^{k+1}
+{\w^{k+1}}\Tr\,\PoneD{}{\mat S}_{\alpha}^{\mat L}.
\end{aligned}
\end{align}
By direct calculation,
\begin{align}\label{d6}
\begin{aligned}
\frac{a}{2}\,{\w^{k+1}}\Tr\e_{\alpha}\Tr\e_{\alpha}\w^{k+1}
&-\epsilon\,{\w^{k+1}}\Tr\e_{\alpha}\Tr\e_{\alpha}\DoneD{x}\w^{k+1}
+{\w^{k+1}}\Tr\,\PoneD{}{\mat S}_{\alpha}^{\mat L} \\
&=
{\w^{k+1}}\Tr\e_{\alpha}\Tr\e_{\alpha}\g^{k+1}
-c\,{\w^{k+1}}\Tr\e_{\alpha}\Tr\e_{\alpha}\w^{k+1}\le \frac{1}{4c}\,(\e_{\alpha}\g^{k+1})^2
\end{aligned}
\end{align}
Substituting the estimate \eqref{d6} into \eqref{d5} and multiplying by $2\delta t$ gives
\begin{align*}
\begin{aligned}
\|\w^{k+1}\|^2_{\PoneD{}}
&+2\delta t\,\epsilon \|\DoneD{x}\w^{k+1}\|^2_{\PoneD{}}
+2\delta t\,\gamma_1\,{\w^{k+1}}\Tr\PoneD{}{\mat S}^{{\mat L},1}_{\sigma}
+2\delta t\,\gamma_2\,{\w^{k+1}}\Tr\PoneD{}{\mat S}^{{\mat L},2}_{\sigma}\\
\le&
\|\w^{k}\|^2_{\PoneD{}}
-\delta t\,a\,{\w^{k+1}}\Tr\e_{\beta}\Tr\e_{\beta}\w^{k+1}
+2\delta t\,\epsilon{\w^{k+1}}\Tr\e_{\beta}\Tr\e_{\beta}\DoneD{x}\w^{k+1}
+\frac{\delta t}{2c}\,(\e_{\alpha}\g^{k+1})^2.
\end{aligned}
\end{align*}
Notice that we could cancel the advection boundary contribution at interface, since it is negative,
\begin{align}\label{d7}
\begin{aligned}
\|\w^{k+1}\|^2_{\PoneD{}}
&+2\delta t\,\epsilon \|\DoneD{x}\w^{k+1}\|^2_{\PoneD{}}
+2\delta t\,\gamma_1\,{\w^{k+1}}\Tr\PoneD{}{\mat S}^{{\mat L},1}_{\sigma}
+2\delta t\,\gamma_2\,{\w^{k+1}}\Tr\PoneD{}{\mat S}^{{\mat L},2}_{\sigma}\\
\le&
\|\w^{k}\|^2_{\PoneD{}}
+2\delta t\,\epsilon{\w^{k+1}}\Tr\e_{\beta}\Tr\e_{\beta}\DoneD{x}\w^{k+1}
+\frac{\delta t}{2c}\,(\e_{\alpha}\g^{k+1})^2.
\end{aligned}
\end{align}
Next multiply both side of  Equation \eqref{eq83} by ${\v^{k+1}}\Tr\PoneD{}$, simplify to obtain
\begin{align*}
\begin{aligned}
\frac1{2\delta t}\|\v^{k+1}\|^2_{\PoneD{}}
&+\kappa\,\|\DoneD{x}\v^{k+1}\|^2_{\PoneD{}}
+\gamma_1\,{\v^{k+1}}\Tr\PoneD{}\,{\mat S}_{\sigma}^{{\mat R},1}
+\gamma_2\,{\v^{k+1}}\Tr\PoneD{}\,{\mat S}_{\sigma}^{{\mat R},2}
+{\v^{k+1}}\Tr\PoneD{}\,{\mat S}_{\sigma}^{{\mat R},3} \\
\le&
\frac1{2\delta t}\|\v^{k}\|^2_{\PoneD{}}
+\kappa\,{\v^{k+1}}\Tr\e_{\beta}\Tr\e_{\beta}\DoneD{x}\v^{k+1}
-\kappa\,{\v^{k+1}}\Tr\e_{\alpha}\Tr\e_{\alpha}\DoneD{x}\v^{k+1}
+{\v^{k+1}}\Tr\PoneD{}\,{\mat S}_{\beta}^{\mat R}.
\end{aligned}
\end{align*}
Doing the same calculations for right boundary terms leads to
\begin{align}\label{d8}
\begin{aligned}
\|\v^{k+1}\|^2_{\PoneD{}}
&+2\delta t\,\kappa\,\|\DoneD{x}\v^{k+1}\|^2_{\PoneD{}}
+2\delta t\,\gamma_1\,{\v^{k+1}}\Tr\PoneD{}\,{\mat S}_{\sigma}^{{\mat R},1}
+2\delta t\,\gamma_2\,{\v^{k+1}}\Tr\PoneD{}\,{\mat S}_{\sigma}^{{\mat R},2}
+2\delta t\,{\v^{k+1}}\Tr\PoneD{}\,{\mat S}_{\sigma}^{{\mat R},3}\\
\le&
\|\v^{k}\|^2_{\PoneD{}}
-2\delta t\,\kappa\,{\v^{k+1}}\Tr\e_{\alpha}\Tr\e_{\alpha}\DoneD{x}\v^{k+1}
+\frac{\delta t}{2}(\e_{\beta}\h^{k+1})^2.
\end{aligned}
\end{align}
Adding \eqref{d7} and \eqref{d8} gives
\begin{align*}
\begin{aligned}
\|\w^{k+1}\|^2_{\PoneD{}}
&+\|\v^{k+1}\|^2_{\PoneD{}}
+2\delta t\,\epsilon \|\DoneD{x}\w^{k+1}\|^2_{\PoneD{}}
+2\delta t\,\kappa\,\|\DoneD{x}\v^{k+1}\|^2_{\PoneD{}}
+2\delta t\,{\v^{k+1}}\Tr\PoneD{}\,{\mat S}_{\sigma}^{{\mat R},3}\\
&+2\delta t\,\gamma_1\Big(
{\w^{k+1}}\Tr\PoneD{}{\mat S}^{{\mat L},1}_{\sigma}
+{\v^{k+1}}\Tr\PoneD{}\,{\mat S}_{\sigma}^{{\mat R},1}\Big)
+2\delta t\,\gamma_2\Big(
{\w^{k+1}}\Tr\PoneD{}{\mat S}^{{\mat L},2}_{\sigma}
+{\v^{k+1}}\Tr\PoneD{}\,{\mat S}_{\sigma}^{{\mat R},2}
\Big)
\\\le&
\|\w^{k}\|^2_{\PoneD{}}
+\|\v^{k}\|^2_{\PoneD{}}+\frac{\delta t}{2c}\,(\e_{\alpha}\g^{k+1})^2
+\frac{\delta t}{2}\,(\e_{\beta}\h^{k+1})^2\\
&
+2\delta t\,\epsilon{\w^{k+1}}\Tr\e_{\beta}\Tr\e_{\beta}\DoneD{x}\w^{k+1}
-2\delta t\,\kappa\,{\v^{k+1}}\Tr\e_{\alpha}\Tr\e_{\alpha}\DoneD{x}\v^{k+1}\\
\end{aligned}
\end{align*}
In the monolithic case ($\v^*=\v^{k+1}$ and $\w^*=\w^{k+1}$),
and this can be simplified to
\begin{align}\label{d10}
\begin{aligned}
\|\w^{k+1}\|^2_{\PoneD{}}
&+\|\v^{k+1}\|^2_{\PoneD{}}
+2\delta t\,\epsilon \|\DoneD{x}\w^{k+1}\|^2_{\PoneD{}}
+2\delta t\,\kappa \|\DoneD{x}\v^{k+1}\|^2_{\PoneD{}}\\
&+2\delta t\,\gamma_1\Big(
{\w^{k+1}}\Tr\PoneD{}{\mat S}^{{\mat L},1}_{\sigma}
+{\v^{k+1}}\Tr\PoneD{}\,{\mat S}_{\sigma}^{{\mat R},1}
\Big)
+2\delta t\,\gamma_2\Big(
{\w^{k+1}}\Tr\PoneD{}{\mat S}^{{\mat L},2}_{\sigma}
+{\v^{k+1}}\Tr\PoneD{}\,{\mat S}_{\sigma}^{{\mat R},2}
\Big)
\\\le&
\|\w^{k}\|^2_{\PoneD{}}
+\|\v^{k}\|^2_{\PoneD{}}
+\frac{\delta t}{2c}\,(\e_{\alpha}\g^{k+1})^2
+\frac{\delta t}{2}\,(\e_{\beta}\h^{k+1})^2\\
&
+2\delta t\,\epsilon(\e_{\beta}\DoneD{x}\w^{k+1})\Big(\e_{\beta}\w^{k+1}-\e_{\alpha}\v^{k+1}\Big).
\end{aligned}
\end{align}
By Cauchy--Schwarz and Young's inequality, for any $\rho>0$,
\begin{align*}
\begin{aligned}
2(\e_{\beta}\DoneD{x}\w^{k+1})\big(\e_{\beta}\w^{k+1}-\e_{\alpha}\v^{k+1}\big)
\le \frac{1}{\rho}\,\big(\e_{\beta}\w^{k+1}-\e_{\alpha}\v^{k+1}\big)^2
+\rho\,(\e_{\beta}\DoneD{x}\w^{k+1})^2.
\end{aligned}
\end{align*}
Applying Lemma~\ref{lemTrace1D} with $\z=\DoneD{x}\w^{k+1}$ yields,
\(
(\e_{\beta}\DoneD{x}\w^{k+1})^2 \le \dfrac{1}{\rho}\,\|\DoneD{x}\w^{k+1}\|^2_{\PoneD{}},
\) therefore,
\begin{align*}
\begin{aligned}
2(\e_{\beta}\DoneD{x}\w^{k+1})\big(\e_{\beta}\w^{k+1}-\e_{\alpha}\v^{k+1}\big)
\le\frac{1}{\rho}\,\big(\e_{\beta}\w^{k+1}-\e_{\alpha}\v^{k+1}\big)^2
+\|\DoneD{x}\w^{k+1}\|_{\PoneD{}}^2.
\end{aligned}
\end{align*}
Apply this in \eqref{d10} 
\begin{align}\label{d11}
\begin{aligned}
\|\w^{k+1}\|^2_{\PoneD{}}
&+\|\v^{k+1}\|^2_{\PoneD{}}
+\delta t\,\epsilon \|\DoneD{x}\w^{k+1}\|^2_{\PoneD{}}
+2\delta t\,\kappa \|\DoneD{x}\v^{k+1}\|^2_{\PoneD{}}\\
&+2\delta t\,\gamma_1\Big(
{\w^{k+1}}\Tr\PoneD{}{\mat S}^{{\mat L},1}_{\sigma}
+{\v^{k+1}}\Tr\PoneD{}\,{\mat S}_{\sigma}^{{\mat R},1}
\Big)
+2\delta t\,\gamma_2\Big(
{\w^{k+1}}\Tr\PoneD{}{\mat S}^{{\mat L},2}_{\sigma}
+{\v^{k+1}}\Tr\PoneD{}\,{\mat S}_{\sigma}^{{\mat R},2}
\Big)
\\\le&
\|\w^{k}\|^2_{\PoneD{}}
+\|\v^{k}\|^2_{\PoneD{}}
+\frac{\delta t}{2c}\,(\e_{\alpha}\g^{k+1})^2
+\frac{\delta t}{2}\,(\e_{\beta}\h^{k+1})^2
+\dfrac{\delta t\,\epsilon}{\rho}\big(\e_{\beta}\w^{k+1}-\e_{\alpha}\v^{k+1}\big)^2.
\end{aligned}
\end{align}
Use $\v^*=\v^{k+1}$ and $\w^*=\w^{k+1}$ in the remaining interface SATs and simplify to obtain:
\begin{align*}
\begin{aligned}
\|\w^{k+1}\|^2_{\PoneD{}}
&+\|\v^{k+1}\|^2_{\PoneD{}}
+\delta t\,\epsilon \|\DoneD{x}\w^{k+1}\|^2_{\PoneD{}}
+2\delta t\,\kappa \|\DoneD{x}\v^{k+1}\|^2_{\PoneD{}}\\
&+\delta t\,(2\gamma_1-\dfrac{\epsilon}{\rho})\Big(\e_{\beta}\w^{k+1}-\e_{\alpha}\v^{k+1}\Big)^2
+2\delta t\,\gamma_2\Big(\epsilon\,\e_{\beta}\DoneD{x}\w^{k+1}-\kappa\,\e_{\alpha}\DoneD{x}\v^{k+1}\Big)^2\\
\le&
\|\w^{k}\|^2_{\PoneD{}}
+\|\v^{k}\|^2_{\PoneD{}}
+\frac{\delta t}{2c}\,(\e_{\alpha}\g^{k+1})^2
+\frac{\delta t}{2}\,(\e_{\beta}\h^{k+1})^2.
\end{aligned}
\end{align*}
By assumption $2\gamma_1\rho\ge\epsilon$. Therefore this simplifies to
\begin{align*}
\begin{aligned}
\|\w^{k+1}\|^2_{\PoneD{}}
+\|\v^{k+1}\|^2_{\PoneD{}}
\le
\|\w^{k}\|^2_{\PoneD{}}
+\|\v^{k}\|^2_{\PoneD{}}
+\frac{\delta t}{2c}\,(\e_{\alpha}\g^{k+1})^2
+\frac{\delta t}{2}\,(\e_{\beta}\h^{k+1})^2.
\end{aligned}
\end{align*}
Applying this estimate recursively yields
\begin{align*}
\begin{aligned}
\|\w^{k+1}\|^2_{\PoneD{}}
+\|\v^{k+1}\|^2_{\PoneD{}}
\le
\|\q_{\mat L}\|^2_{\PoneD{}}+\|\q_{\mat R}\|_{\PoneD{}}^2
+\Sigma_{i=1}^{k+1}\left(\frac{\delta t}{2c}\,(\e_{\alpha}\g^{i})^2
+\frac{\delta t}{2}\,(\e_{\beta}\h^{i})^2\right).
\end{aligned}
\end{align*}
Notice that we used \(\|\w^0\|^2_{\PoneD{}}+\|\v^0\|^2_{\PoneD{}}=\|\q_{\mat L}\|^2_{\PoneD{}}+\|\q_{\mat R}\|_{\PoneD{}}^2.\)
\end{proof}
\textit{\emph{\bf Note 2.}}  Notice that the calculation shows the contributions of ${\mat S}^{{\mat L},2}_{\sigma}$ and ${\mat S}^{{\mat R},2}_{\sigma}$ are not necessary to obtain a stable coupled monolithic scheme. Therefore, we may set $\gamma_2=0$ in the monolithic case without loss of stability. In the next theorem, we derive the conditional stability conditions for the proposed partitioned coupling.
\begin{thm}[Stability of the partitioned 1D scheme]\label{thm:1D_part}
Consider the scheme \eqref{eq82}--\eqref{eq83} with the \emph{partitioned} choice
$\v^*=\v^{k}$ and $\w^*=\w^{k+1}$. 
Then, under the conditions 
	\begin{subequations}
		\begin{align}
			\dfrac{\epsilon}{\rho}\le\gamma_1&\le \dfrac{C_1}{\delta t},\label{da1}
			\\ \gamma_2&\le \dfrac{C_2}{\kappa^2\delta t},\label{da2}
		\end{align}
	\end{subequations}
with $C_1$ and $C_2$ are given positive constants, the following energy estimate holds 
\begin{align*}
\begin{aligned}
\|\w^{k+1}\|^2_{\PoneD{}}
&+\|\v^{k+1}\|^2_{\PoneD{}}
+\delta t\,\gamma_1(\e_{\alpha}\v^{k+1})^2
+\delta t\,\gamma_2(\kappa\,\e_{\alpha}\DoneD{x}\v^{k+1})^2
\\\le&
\|\q_{\mat L}\|^2_{\PoneD{}}
+\|\q_{\mat R}\|^2_{\PoneD{}} 
+\sum_{i=1}^{k+1}\left(
\frac{\delta t}{2c}\,(\e_{\alpha}\g^{i})^2
+\frac{\delta t}{2}\,(\e_{\beta}\h^{i})^2\right)
+\delta t\,\gamma_1(\e_{\alpha}\v^{0})^2
+\delta t\,\gamma_2(\kappa\,\e_{\alpha}\DoneD{x}\v^{0})^2,
\end{aligned}
\end{align*}
for all $k\ge 0$,
\end{thm}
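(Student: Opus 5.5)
The plan is to repeat the energy argument of Theorem~\ref{thm:1D_mono} verbatim up to the summed inequality that precedes \eqref{d10}. Up to that point the choice of $\v^*,\w^*$ was never used. From there I would treat the interface terms with the lagged data $\v^*=\v^k$, $\w^*=\w^{k+1}$. The extra terms coming from the time lag should not be bounded away. Instead I would rewrite them as a telescoping trace energy plus a nonnegative remainder; this is why the estimate carries the extra quantities $\delta t\gamma_1(\e_\alpha\v^{k+1})^2$ and $\delta t\gamma_2(\kappa\e_\alpha\DoneD{x}\v^{k+1})^2$.

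Write $J:=\e_\beta\w^{k+1}-\e_\alpha\v^{k+1}$, $d:=\e_\alpha\v^{k+1}-\e_\alpha\v^k$, $A:=\epsilon\e_\beta\DoneD{x}\w^{k+1}$ and $B^{m}:=\kappa\e_\alpha\DoneD{x}\v^{m}$. First I would collect all non-penalized interface contributions. These are $2\delta t\,\epsilon\,\w^{k+1}{}^T\e_\beta^T\e_\beta\DoneD{x}\w^{k+1}$, the term $-2\delta t\,\kappa\,\v^{k+1}{}^T\e_\alpha^T\e_\alpha\DoneD{x}\v^{k+1}$, and the ${\mat S}^{{\mat R},3}_\sigma$ term, which is evaluated with $\w^*=\w^{k+1}$ and therefore contains no lag. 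Summing them gives
\[
2\delta t\big[(\e_\beta\w^{k+1})A-(\e_\alpha\v^{k+1})B^{k+1}-(\e_\alpha\v^{k+1})(A-B^{k+1})\big]=2\delta t\,A\,J ,
\]
which is exactly the monolithic remainder. Young's inequality and Lemma~\ref{lemTrace1D} then bound it by $\frac{\delta t\epsilon}{\rho}J^2+\delta t\epsilon\|\DoneD{x}\w^{k+1}\|^2_{\PoneD{}}$, precisely as in \eqref{d11}.

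Next come the penalty terms. A direct computation gives
\[
\w^{k+1}{}^T\PoneD{}{\mat S}^{{\mat L},1}_\sigma+\v^{k+1}{}^T\PoneD{}{\mat S}^{{\mat R},1}_\sigma=J^2+(\e_\beta\w^{k+1})\,d .
\]
I would split $\e_\beta\w^{k+1}=J+\e_\alpha\v^{k+1}$ and use the polarization identity $2(\e_\alpha\v^{k+1})d=(\e_\alpha\v^{k+1})^2-(\e_\alpha\v^k)^2+d^2$. This gives
\[
2\big(J^2+Jd+(\e_\alpha\v^{k+1})d\big)=J^2+(J+d)^2+(\e_\alpha\v^{k+1})^2-(\e_\alpha\v^k)^2\ \ge\ J^2+(\e_\alpha\v^{k+1})^2-(\e_\alpha\v^k)^2 .
\]
The $\gamma_2$ terms have identical structure: their sum equals $(A-B^{k+1})^2+A(B^{k+1}-B^k)$. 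Hence $2\delta t\gamma_2(\cdots)\ge\delta t\gamma_2\big[(A-B^{k+1})^2+(B^{k+1})^2-(B^k)^2\big]$.

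Inserting these bounds, the $J^2$ terms combine into $\delta t(\gamma_1-\epsilon/\rho)J^2$. This is nonnegative by the lower bound in \eqref{da1}, so it can be dropped together with the dissipative terms $\delta t\epsilon\|\DoneD{x}\w^{k+1}\|^2$, $2\delta t\kappa\|\DoneD{x}\v^{k+1}\|^2$ and $\delta t\gamma_2(A-B^{k+1})^2$. The one-step result is
\[
\mathcal{E}^{k+1}\le \mathcal{E}^{k}+\tfrac{\delta t}{2c}(\e_\alpha\g^{k+1})^2+\tfrac{\delta t}{2}(\e_\beta\h^{k+1})^2,\qquad
\mathcal{E}^m:=\|\w^m\|^2_{\PoneD{}}+\|\v^m\|^2_{\PoneD{}}+\delta t\gamma_1(\e_\alpha\v^m)^2+\delta t\gamma_2(B^m)^2 .
\]
Summing from $0$ to $k$ then gives the claimed estimate.

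The main obstacle is the algebra of the lagged cross terms. It must come out in the telescoping form above, and in particular the ${\mat S}^{{\mat R},3}_\sigma$ contribution must cancel exactly so that only $2\delta t AJ$ survives. If that identity fails, I would instead bound $(\e_\beta\w^{k+1})d$ by Young's inequality and use the upper bounds \eqref{da1}--\eqref{da2}. Those bounds make $\delta t\gamma_1$ and $\delta t\gamma_2\kappa^2$ uniformly bounded, so the resulting lag contributions stay controlled by $\|\v^{k}\|^2_{\PoneD{}}$ through Lemma~\ref{lemTrace1D}. The same upper bounds also ensure that the extra initial-data terms $\delta t\gamma_1(\e_\alpha\v^0)^2$ and $\delta t\gamma_2(\kappa\e_\alpha\DoneD{x}\v^0)^2$ remain $\mathcal{O}(1)$.
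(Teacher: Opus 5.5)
Your proposal is correct and follows the paper's proof essentially step for step: the paper also starts from \eqref{d11}, splits the penalty contributions into $I_1$ and $I_2$ exactly as in \eqref{d12}--\eqref{d13}, and turns the lagged cross terms into telescoping trace quantities via \eqref{d15}--\eqref{d16} (your sum-of-squares identity is that same Young step written as an equality). It likewise uses only the lower bound in \eqref{da1}, and your explicit check that the ${\mat S}^{{\mat R},3}_{\sigma}$ contribution needs only $\w^*=\w^{k+1}$ justifies the paper's tacit reuse of \eqref{d11} for the partitioned choice. One caveat is inherited from the paper rather than introduced by you: in \eqref{d6} the $x=\alpha$ terms actually combine to $(\e_{\alpha}\w^{k+1})(\e_{\alpha}\g^{k+1})-(\zeta-\tfrac{a}{2})(\e_{\alpha}\w^{k+1})^2$, because for $a\ge 0$ that end is an inflow boundary, so the data-term constant should be $\zeta-\tfrac{a}{2}>0$ rather than $c=\zeta+\tfrac{a}{2}$.
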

\begin{proof} 
Starting from \eqref{d11}, we consider the partitioned scheme and set
$\v^*=\v^{k}$ and $\w^*=\w^{k+1}$ in the remaining interface SAT terms.
In particular, by treating the interface SAT contributions
${\mat S}_{\sigma}^{{\mat L},1}$ and ${\mat S}_{\sigma}^{{\mat R},1}$ together, we obtain
\begin{align}\label{d12}
\begin{aligned}
I_1
&:=2\delta t\,\gamma_1\,{\w^{k+1}}\Tr\PoneD{}\,{\mat S}_{\sigma}^{{\mat L},1}
  +2\delta t\,\gamma_1\,{\v^{k+1}}\Tr\PoneD{}\,{\mat S}_{\sigma}^{{\mat R},1}\\
&\;=2\delta t\,\gamma_1\,(\e_{\beta}\w^{k+1})\big(\e_{\beta}\w^{k+1}-\e_{\alpha}\v^{k}\big)
  +2\delta t\,\gamma_1\,(\e_{\alpha}\v^{k+1})\big(\e_{\alpha}\v^{k+1}-\e_{\beta}\w^{k+1}\big)\\
&\;=2\delta t\,\gamma_1\Big(\e_{\beta}\w^{k+1}-\e_{\alpha}\v^{k+1}\Big)^2
  +2\delta t\,\gamma_1\,(\e_{\beta}\w^{k+1})\Big(\e_{\alpha}\v^{k+1}-\e_{\alpha}\v^{k}\Big).
\end{aligned}
\end{align}
Next, consider the interface SAT terms ${\mat S}_{\Sigma}^{{\mat L},2}$ and ${\mat S}_{\Sigma}^{{\mat R},2}$ together, in same way
\begin{align}\label{d13}
\begin{aligned}
I_2
&:=2\delta t\,\gamma_2\,{\w^{k+1}}\PoneD{}\,{\mat S}_{\sigma}^{{\mat L},2}
  +2\delta t\,\gamma_2\,{\v^{k+1}}\PoneD{}\,{\mat S}_{\sigma}^{{\mat R},2}\\
&\;=2\delta t\,\gamma_2\Big(\epsilon\,\e_{\beta}\DoneD{x}\w^{k+1}-\kappa\,\e_{\alpha}\DoneD{x}\v^{k+1}\Big)^2
 +2\delta t\,\gamma_2\,\big(\epsilon\,\e_{\beta}\DoneD{x}\w^{k+1}\big)
   \Big(\kappa\,\e_{\alpha}\DoneD{x}\v^{k+1}-\kappa\,\e_{\alpha}\DoneD{x}\v^{k}\Big).
\end{aligned}
\end{align}
Substituting \eqref{d12}--\eqref{d13} into \eqref{d11}, we obtain
\begin{align}\label{d14}
\begin{aligned}
\|\w^{k+1}\|^2_{\PoneD{}}
&+\|\v^{k+1}\|^2_{\PoneD{}}
+\delta t\,\epsilon \|\DoneD{x}\w^{k+1}\|^2_{\PoneD{}}
+2\delta t\,\kappa \|\DoneD{x}\v^{k+1}\|^2_{\PoneD{}}\\
&+\delta t\,\big(2\gamma_1-\dfrac{\epsilon}{\rho}\big)\Big(\e_{\beta}\w^{k+1}-\e_{\alpha}\v^{k+1}\Big)^2
+2\delta t\,\gamma_2\Big(\epsilon\,\e_{\beta}\DoneD{x}\w^{k+1}-\kappa\,\e_{\alpha}\DoneD{x}\v^{k+1}\Big)^2\\
\le&
\|\w^{k}\|^2_{\PoneD{}}
+\|\v^{k}\|^2_{\PoneD{}}
+\frac{\delta t}{2c}\,(\e_{\alpha}\g^{k+1})^2
+\frac{\delta t}{2}\,(\e_{\beta}\h^{k+1})^2\\&
-2\delta t\,\gamma_1\,(\e_{\beta}\w^{k+1})\Big(\e_{\alpha}\v^{k+1}-\e_{\alpha}\v^{k}\Big)\\&
-2\delta t\,\gamma_2\,\big(\epsilon\,\e_{\beta}\DoneD{x}\w^{k+1}\big)
\Big(\kappa\,\e_{\alpha}\DoneD{x}\v^{k+1}-\kappa\,\e_{\alpha}\DoneD{x}\v^{k}\Big).
\end{aligned}
\end{align}
Then consider the last term on the right-hand side of \eqref{d14}. We have
{\setlength{\abovedisplayskip}{7pt}
\setlength{\belowdisplayskip}{7pt}
\begin{align}\label{d15}
\begin{aligned}
-(\e_{\beta}\w^{k+1})&\Big(\e_{\alpha}\v^{k+1}-\e_{\alpha}\v^{k}\Big)
=(\e_{\beta}\w^{k+1})\Big(\e_{\alpha}\v^{k}-\e_{\alpha}\v^{k+1}\Big)\\
&=\Big(\e_{\beta}\w^{k+1}-\e_{\alpha}\v^{k+1}+\e_{\alpha}\v^{k+1}\Big)
   \Big(\e_{\alpha}\v^{k}-\e_{\alpha}\v^{k+1}\Big)\\
&=(\e_{\alpha}\v^{k+1})\Big(\e_{\alpha}\v^{k}-\e_{\alpha}\v^{k+1}\Big)
 +\Big(\e_{\beta}\w^{k+1}-\e_{\alpha}\v^{k+1}\Big)
  \Big(\e_{\alpha}\v^{k}-\e_{\alpha}\v^{k+1}\Big)\\
&\le (\e_{\alpha}\v^{k+1})(\e_{\alpha}\v^{k})-(\e_{\alpha}\v^{k+1})^2
+\frac12\Big(\e_{\alpha}\v^{k}-\e_{\alpha}\v^{k+1}\Big)^2
+\frac12\Big(\e_{\beta}\w^{k+1}-\e_{\alpha}\v^{k+1}\Big)^2\\
&=\frac12(\e_{\alpha}\v^{k})^2-\frac12(\e_{\alpha}\v^{k+1})^2
+\frac12\Big(\e_{\beta}\w^{k+1}-\e_{\alpha}\v^{k+1}\Big)^2.
\end{aligned}
\end{align}}
Doing the same for the flux term gives
{\setlength{\abovedisplayskip}{7pt}
\setlength{\belowdisplayskip}{7pt}
\begin{align}\label{d16}
\begin{aligned}
-\big(\epsilon\,\e_{\beta}\DoneD{x}\w^{k+1}\big)&\Big(\kappa\,\e_{\alpha}\DoneD{x}\v^{k+1}-\kappa\,\e_{\alpha}\DoneD{x}\v^{k}\Big)
=\big(\epsilon\,\e_{\beta}\DoneD{x}\w^{k+1}\big)\Big(\kappa\,\e_{\alpha}\DoneD{x}\v^{k}-\kappa\,\e_{\alpha}\DoneD{x}\v^{k+1}\Big)\\
&\le \frac12(\kappa\,\e_{\alpha}\DoneD{x}\v^{k})^2-\frac12(\kappa\,\e_{\alpha}\DoneD{x}\v^{k+1})^2
+\frac12(\epsilon\,\e_{\beta}\DoneD{x}\w^{k+1}-\kappa\,\e_{\alpha}\DoneD{x}\v^{k+1})^2.
\end{aligned}
\end{align}}
Substituting \eqref{d15}--\eqref{d16} into \eqref{d14} yields
\begin{align*}
\begin{aligned}
\|\w^{k+1}\|^2_{\PoneD{}}
&+\|\v^{k+1}\|^2_{\PoneD{}}
+\delta t\,\epsilon \|\DoneD{x}\w^{k+1}\|^2_{\PoneD{}}
+2\delta t\,\kappa \|\DoneD{x}\v^{k+1}\|^2_{\PoneD{}}\\
&\quad+\delta t\,\big(\gamma_1-\dfrac{\epsilon}{\rho}\big)\Big(\e_{\beta}\w^{k+1}-\e_{\alpha}\v^{k+1}\Big)^2
+\delta t\,\gamma_2\Big(\epsilon\,\e_{\beta}\DoneD{x}\w^{k+1}-\kappa\,\e_{\alpha}\DoneD{x}\v^{k+1}\Big)^2\\
&\le
\|\w^{k}\|^2_{\PoneD{}}
+\|\v^{k}\|^2_{\PoneD{}}
+\frac{\delta t}{2c}\,(\e_{\alpha}\g^{k+1})^2
+\frac{\delta t}{2}\,(\e_{\beta}\h^{k+1})^2\\
&\quad+\delta t\,\gamma_1\Big((\e_{\alpha}\v^{k})^2-(\e_{\alpha}\v^{k+1})^2\Big)
+\delta t\,\gamma_2\Big((\kappa\,\e_{\alpha}\DoneD{x}\v^{k})^2-(\kappa\,\e_{\alpha}\DoneD{x}\v^{k+1})^2\Big).
\end{aligned}
\end{align*}
By assumption \eqref{da2}, $\gamma_1\rho\ge\epsilon$ and therefore this simplifies to
\begin{align*}
\begin{aligned}
\|\w^{k+1}\|^2_{\PoneD{}}
+\|\v^{k+1}\|^2_{\PoneD{}}
\le&
\|\w^{k}\|^2_{\PoneD{}}
+\|\v^{k}\|^2_{\PoneD{}}
+\frac{\delta t}{2c}\,(\e_{\alpha}\g^{k+1})^2
+\frac{\delta t}{2}\,(\e_{\beta}\h^{k+1})^2\\
&+\delta t\,\gamma_1\Big((\e_{\alpha}\v^{k})^2-(\e_{\alpha}\v^{k+1})^2\Big)
+\delta t\,\gamma_2\Big((\kappa\,\e_{\alpha}\DoneD{x}\v^{k})^2-(\kappa\,\e_{\alpha}\DoneD{x}\v^{k+1})^2\Big).
\end{aligned}
\end{align*}
Applying this estimate recursively yields
\begin{align*}
\begin{aligned}
\|\w^{k+1}\|^2_{\PoneD{}}
+\|\v^{k+1}\|^2_{\PoneD{}}
\le&
\|\w^{0}\|^2_{\PoneD{}}
+\|\v^{0}\|^2_{\PoneD{}}
+\Sigma_{i=1}^{k+1}\left(\frac{\delta t}{2c}\,(\e_{\alpha}\g^{i})^2
+\frac{\delta t}{2}\,(\e_{\beta}\h^{i})^2\right)\\
&+\delta t\,\gamma_1\Sigma_{i=0}^{k}\Big((\e_{\alpha}\v^{i})^2-(\e_{\alpha}\v^{i+1})^2\Big)
+\delta t\,\gamma_2\Sigma_{i=0}^{k}\Big((\kappa\,\e_{\alpha}\DoneD{x}\v^{i})^2-(\kappa\,\e_{\alpha}\DoneD{x}\v^{i+1})^2\Big).
\end{aligned}
\end{align*}
Equivalently,
\begin{align*}
\begin{aligned}
\|\w^{k+1}\|^2_{\PoneD{}}
&+\|\v^{k+1}\|^2_{\PoneD{}}
+\delta t\,\gamma_1(\e_{\alpha}\v^{k+1})^2
+\delta t\,\gamma_2(\kappa\,\e_{\alpha}\DoneD{x}\v^{k+1})^2\\
\le&
\|\q_{\mat L}\|^2_{\PoneD{}}
+\|\q_{\mat R}\|^2_{\PoneD{}}
+\Sigma_{i=1}^{k+1}\left(\frac{\delta t}{2c}\,(\e_{\alpha}\g^{i})^2
+\frac{\delta t}{2}\,(\e_{\beta}\h^{i})^2\right)+\delta t\,\gamma_1(\e_{\alpha}\v^{0})^2
+\delta t\,\gamma_2(\kappa\,\e_{\alpha}\DoneD{x}\v^{0})^2.
\end{aligned}
\end{align*}
Assuming that \eqref{da1}--\eqref{da2} hold, this provide a discrete stability estimate for the proposed partitioned scheme.
\end{proof}
\textit{\emph{\bf Note 3.}}  The conditions on stability in Theorem \ref{thm:1D_part} establish a CFL-like stability condition of the form $\delta t\le \rho^2$ to ensure both the compatibility of the continuous and discrete energy estimate and energy stability.\\

\textit{\emph{\bf Note 4.}} We use an energy-based analysis to design a conditionally stable partitioned coupling at the discrete level. This yields a global stability bound, but it does not provide a mode-by-mode characterization of transverse/tangential behavior, such as mode amplification, dispersion, or transient effects that may be important in practice. A detailed transverse analysis (e.g., normal-mode analysis) is complementary and provide sharper mode-by-mode insight.

\section{Partitioned SBP-SAT scheme in 3D curvilinear setting}\label{sec:3dCurv}
Having established the monolithic and partitioned coupling strategies in 1D Cartesian coordinates, we now extend the construction to 3D curvilinear grids in order to handle general geometries. We first introduce the mapping between the physical and computational domains in Section~\ref{subsec:coor}. To illustrate the coordinate-transformation procedure, in Section~\ref{subsec:tran} we rewrite the advection--diffusion equation in curvilinear coordinates. Also in Section~\ref{subsec:SbpCurv} we define multi-dimensional SBP operators together with boundary and interface SAT terms, and we present a stable semi-discrete SBP--SAT scheme for the advection--diffusion equation in curvilinear coordinate; the same construction is later applied to the model CHT problem \eqref{eq03a}--\eqref{eq03c}. Finally, in Section~\ref{sebsec:3DCurvSBP} we present the fully discrete 3D scheme for the coupled PDEs. Discrete stability estimates are derived in Section~\ref{subsec:alg3D}. Extensions based on time extrapolation at the interface and a second-order-in-time discretization are treated in Sections~\ref{SubSec:BEEXT} and \ref{SubSec:BEFE}, respectively.

\subsection{Coordinate transformation}\label{subsec:coor}
We now extend the formulation and stability analysis from the 1D Cartesian setting to 3D curvilinear grids. We assume the physical domain $\Omega\subset\Rs^3$ with boundary $\Gamma:=\partial \Omega$, with Cartesian coordinates  
$\x:=(x_1,\,x_2,\,x_3)\in\Omega$ is mapped to a rectangular cuboid  computational (reference) domain, 
\[\hat \Omega=[\alpha_1,\beta_1]\times[\alpha_2,\beta_2]\times[\alpha_3,\beta_3],\]
with coordinates  $\bxi:=(\xi_1,\,\xi_2,\,\xi_3)$ with the boundary $\hat{\Gamma}:=\partial \hat\Omega$. More precisely,  the 
reference domain maps onto the physical domain by a fixed (time-independent) transformation,
\[(x_1,\,x_2,\,x_3)=X(\xi_1,\,\xi_2,\,\xi_3).\]
The Jacobian matrix and its determinant (called the metric Jacobian) are denoted by 
\begin{eqnarray}\nonumber\label{eq03}
	\dfrac{\partial(x_1,x_2,x_3)}{\partial (\xi_1,\xi_2,\xi_3)}=\begin{bmatrix}
		\dfrac{\partial x_1}{\partial \xi_1}&\dfrac{\partial x_1}{\partial \xi_2}&\dfrac{\partial x_1}{\partial \xi_3}\\
		\dfrac{\partial x_2}{\partial \xi_1}&\dfrac{\partial x_2}{\partial \xi_2}&\dfrac{\partial x_2}{\partial \xi_3}\\
		\dfrac{\partial x_3}{\partial \xi_1}&\dfrac{\partial x_3}{\partial \xi_2}&\dfrac{\partial x_3}{\partial \xi_3}\\
	\end{bmatrix},\qquad J:=\bigg|\dfrac{\partial(x_1,x_2,x_3)}{\partial (\xi_1,\xi_2,\xi_3)}\bigg|,
\end{eqnarray} 
where we assume that $X$ is such that $J>0$. The vectors $\n=[n_{x_1},n_{x_2},n_{x_3}]\Tr$ and $\hat \n=[\hat n_{\xi_1},\hat n_{\xi_2},\hat n_{\xi_3}]\Tr$ are the 
outward facing unit normals in computational and physical coordinates, respectively. For a given surface, for example,  the surface $\Gamma_l$ and associated  reference surface $\hat \Gamma_l$ given by $\xi_l=\beta_l$ (surface perpendicular to the $l$ computational coordinate), they are related as follows: 
\begin{eqnarray*}
	n_{x_m}=  \sum_{l=1}^3 \dfrac{J}{\hat J_{\beta_l}}\, \dfrac{\partial \xi_l}{\partial x_m}\,\hat n_{\xi_l},\qquad m=1,2,3,
\end{eqnarray*}
where $\hat J_{\beta_l}$ is  the surface Jacobian and is given by 
\begin{eqnarray*}
	\hat J_{\beta_l}=  \bigg(\sum_{m=1}^3 \left(J\dfrac{\partial \xi_l}{\partial x_m}\right)^2  \bigg)^{\frac12},\qquad l=1,2,3.
\end{eqnarray*}

\subsection{Transforming the PDE to curvilinear coordinates}\label{subsec:tran}
In this section we explain how to transform the PDEs from physical coordinates to curvilinear coordinates. To facilitate this, we begin by considering a single linear advection-diffusion equation defined in physical coordinates. This approach allows us to clearly illustrate construction of stable SBP-SAT scheme before extending it to the CHT problem. Consider the linear advection-diffusion equation in  physical coordinates $\Omega\in\Rs^3$
\begin{subequations}
	\begin{align}     
		\dfrac{\partial \fnc{W}}{\partial t}+\sum_{m=1}^3 \dfrac{\partial(a_m \fnc{W})}{\partial x_m}&=\epsilon\sum_{m=1}^3\dfrac{\partial^2 \fnc{W}}{\partial x_m^2},&\qquad& \x\in\Omega,\;0\le t\le T,\label{eq07a}
		\\\zeta \fnc{W}+\epsilon\sum_{m=1}^3\dfrac{\partial(n_{x_m}\fnc{W})}{\partial x_m}&=g(\x,t),&\qquad& \x\in\partial \Omega,\;0\le t\le T,\label{eq07b}
		\\\fnc{W}(\x,0)&=q(\x),&\qquad&\x\in\Omega \label{eq07c},
	\end{align}
\end{subequations}
where \[\zeta=\dfrac12\left(\bigg|\sum_{m=1}^3 a_mn_{x_m}\bigg|-\sum_{m=1}^3 a_mn_{x_m}\right).\] Expanding the derivatives with the chain rule results in
\begin{align*}
	\begin{aligned}
		&\dfrac{\partial}{\partial x_m}=\sum_{l=1}^3\dfrac{\partial\xi_l}{\partial x_m}\dfrac{\partial}{\partial \xi_l},\quad\quad\dfrac{\partial^2}{\partial x_m^2}=\sum_{l,a=1}^3\dfrac{\partial\xi_l}{\partial x_m}\dfrac{\partial}{\partial \xi_l}\big(\dfrac{\partial\xi_a}{\partial x_m}\dfrac{\partial}{\partial \xi_a}\big),\qquad m=1,2,3.
	\end{aligned}
\end{align*}
Utilizing these in \eqref{eq07a} and multiplying by the metric Jacobian, $J$, we obtain
\begin{align}\label{eq08}
	\begin{aligned}
		J\dfrac{\partial \fnc{W}}{\partial t}+\sum_{l,m=1}^3 J \dfrac{\partial\xi_l}{\partial x_m}\dfrac{\partial (a_m\fnc{W})}{\partial \xi_l}=\epsilon\sum_{l,a,m=1}^3J\dfrac{\partial\xi_l}{\partial x_m}\dfrac{\partial}{\partial \xi_l}\big(\dfrac{\partial\xi_a}{\partial x_m}\dfrac{\partial \fnc{W}}{\partial \xi_a}\big).
	\end{aligned}
\end{align}
Bringing the metric term, $J \dfrac{\partial\xi_l}{\partial x_m}$, inside the derivative and using again the chain rule and metric identities:
\begin{align}\label{eqMIs}
	\begin{aligned}
		\sum_{l=1}^3 \dfrac{\partial }{\partial\xi_l }\bigg(J \dfrac{\partial \xi_l}{\partial x_m}\bigg)=0,\qquad m=1,2,3,
	\end{aligned}
\end{align}
leads to the strong conservation form of the advection-diffusion equation in curvilinear coordinates,
\begin{align}\label{eq09}
	\begin{aligned}
		J\dfrac{\partial \fnc{W} }{\partial t}+\sum_{l,m=1}^3 \dfrac{\partial }{\partial\xi_l }\bigg(J \dfrac{\partial \xi_l }{\partial x_m}a_m \fnc{W}\bigg)
		= \epsilon\sum_{l,a,m=1}^3 \dfrac{\partial }{\partial \xi_l}\bigg(J\dfrac{\partial\xi_l }{\partial x_m}\dfrac{\partial \xi_a}{\partial x_m }\dfrac{\partial \fnc{W} }{\partial\xi_a }\bigg).
	\end{aligned}
\end{align}
Notice that the mapping is fixed in time and hence $J\dfrac{\partial \fnc{W} }{\partial t}=\dfrac{\partial J \fnc{W} }{\partial t}$.

\subsection{Multi-dimensional SBP operators, governing equations and boundary and interface SATs in 3D curvilinear coordinates}\label{subsec:SbpCurv}
Assume $\bxi^{(1)}, \bxi^{(2)},\ldots,\bxi^{(N)}$ are grid points defined on $\hat\Omega$ where $N$ is the total 
number of nodes $N=N_1\,N_2\,N_3$, where $N_l$, for $l=1,2,3$, are positive integers denoting the number of nodes 
in each direction. For any real $t$ assume $\fnc{U}(\cdot, t)\in H^2(\Omega)$, then we define the grid function $\u$ on the $N$ nodes as
\[\u :=\big[\fnc{U}(\bxi^{(1)},t), \fnc{U}(\bxi^{(2)},t),\ldots,\fnc{U}(\bxi^{(N)},t)\big]\Tr.\]
The definition of a 1D SBP operator in the $\xi_l$ direction, $\DoneD{\xi_l}\in\Rs^{N_l\times N_l}$, for $ l=1,2,3$  are given in same manner as Definition \eqref{Def:SBP1D}. 
\begin{defnition}\label{Def:SBP1DCurv}
A matrix operator $\DoneD{\xi_l}\in\Rs^{N_l\times N_l}$ is an SBP operator of degree $p$
approximating the first derivative $\frac{\partial}{\partial \xi_l}$ on $\xi_l\in[\alpha_l,\beta_l]$
with nodal vector $\bxi_l=[\xi_l^{(1)},\dots,\xi_l^{(N_l)}]^T$, if
\begin{enumerate}
  \item $\DoneD{\xi_l}\,(\bxi_l)^{\,r}=r\,(\bxi_l)^{\,r-1}$ for $r=1,2,\ldots,p$, where
  $(\bxi_l)^{\,r}:=[(\xi_l^{(1)})^r,\dots,(\xi_l^{(N_l)})^r]^T$, and $\DoneD{\xi_l}\,\bm{1}=\bm{0}$.
  \item $\DoneD{\xi_l}=\PoneD{\xi_l}^{-1}\QoneD{\xi_l}$ with $\PoneD{\xi_l}$ symmetric positive definite.
  \item $\QoneD{\xi_l}+\QoneD{\xi_l}^T=\EoneD{\xi_l}$, where
  \[
    \EoneD{\xi_l}=\diag(-1,0,\ldots,0,1)
    =\e_{\beta_l}^T\e_{\beta_l}-\e_{\alpha_l}^T\e_{\alpha_l},
  \]
  with $\e_{\alpha_l},\e_{\beta_l}\in\Rs^{1\times N_l}$ given by
  $\e_{\alpha_l}=[1,0,\ldots,0]$ and $\e_{\beta_l}=[0,\ldots,0,1]$.
\end{enumerate}
\end{defnition}
We extend the 1D SBP operator to multiple dimensions using tensor products \cite{Nordstrom2001}, \cite{DelRey2018}, \cite{Hicken2016}. The tensor product between the matrices $\mr A$ and $\mr B$ is given as $\mr A\otimes\mr B$.
We construct our multi-dimensional SBP operators as 
\[\Pnorm{}:=\PoneD{\xi_1}\otimes \PoneD{\xi_2}\otimes\PoneD{\xi_3},\qquad
\qquad\Q{\xi_1}:=\QoneD{\xi_1}\otimes\PoneD{\xi_2}\otimes\PoneD{\xi_3}\qquad\qquad
\D{\xi_1}:=\Pnorm{}^{-1}\Q{\xi_1}.\]
Equivalently we have 
\(\D{\xi_1}=\DoneD{\xi_1}\otimes \I{\xi_2}\otimes\I{\xi_3},\) 
where $\I{\xi_l}$ are $N_l\times N_l$ identity matrices for $l=1,2,3$. $\D{\xi_2}$ and $\D{\xi_3}$ are defined similarly. The surface matrices are given as
\begin{align*} 
	\begin{aligned}
	&\E{\xi_1}:=\E{\beta_1}-\E{\alpha_1},&\qquad& \E{\alpha_1}:=\R{\alpha_1}\Tr\Pnorm{\perp\xi_1}\R{\alpha_1},&\qquad&		\E{\beta_1}:=\R{\beta_1}\Tr\Pnorm{\perp\xi_1}\R{\beta_1},		\\&\R{\alpha_1}:=\e_{\alpha_1}\otimes\I{\xi_2}\otimes\I{\xi_3},&\qquad&		\R{\beta_1}:=\e_{\beta_1}\otimes\I{\xi_2}\otimes\I{\xi_3},&\qquad&
	\Pnorm{\perp\xi_1}:=\PoneD{\xi_2}\otimes\PoneD{\xi_3}.
	\end{aligned}
\end{align*}
The operators in the other two computational directions are defined similarly.

In order to construct a semi-discrete form of \eqref{eq09}, we define the diagonal matrices containing the metric Jacobian and metric terms along their diagonals, respectively, as follows:
\begin{align*} 
	\begin{aligned}
		\MJack{}&:=\diag\big(J(\bxi^{(1)}),\ldots,J(\bxi^{(N)})\big),\\
		\MJdxildxmk{l}{m}{}&:=\diag\bigg(\Jdxildxm{l}{m}{}(\bxi^{(1)}),\ldots,\Jdxildxm{l}{m}{}(\bxi^{(N)})\bigg),\\
		\Cmat{l,a}{}&:=\diag\big({\mat C}_{l,a}(\bxi^{(1)}),\ldots, {\mat C}_{l,a}(\bxi^{(N)})\big),
	\end{aligned}
\end{align*}
where ${\mat C}_{l,a}=\sum_{m=1}^3 J\dfrac{\partial \xi_l}{\partial x_m}\dfrac{\partial \xi_a}{\partial x_m},$
for $l=1,2,3$. To define a stable scheme, we split the inviscid terms into one half of the inviscid terms in \eqref{eq08} and one half of  the inviscid terms in \eqref{eq09}, while keeping the viscous terms in strong conservation form, see \cite{DelRey2020p} for more details. This leads to the following semi-discrete form for \eqref{eq07a}
\begin{align*}
	\begin{aligned}
		\MJack{} \dfrac{d \w}{d t}+\dfrac12\sum_{l,m=1}^3 a_m\bigg\{\D{\xi_l}\MJdxildxmk{l}{m}{}+\MJdxildxmk{l}{m}{}\D{\xi_l}\bigg\}\w
		=\epsilon\sum_{l,a=1}^3\D{\xi_l}\Cmat{l,a}{}\D{\xi_a}\w.
	\end{aligned}
\end{align*}
We assume that the metric terms used to discretize the PDE satisfy the discrete metric identities~\eqref{eqMIs} (see \cite{Crean2018,Thomas_1979,Vinokur_2001,Nolasco_2020}):
\begin{align*}
	\begin{aligned}
		\sum_{l=1}^3\D{\xi_l}\MJdxildxmk{l}{m}{}\one=\zero,\qquad m=1,2,3,
	\end{aligned}
\end{align*}
In a similar manner to the definition of the diagonal volume Jacobian matrix, we define the diagonal surface Jacobian matrices as follows:
let $\hat{\Gamma}_{\alpha_l}$ and $\hat{\Gamma}_{\beta_l}$ 
be surfaces on computational domain with $\xi_l=\alpha_l$ and $\xi_l=\beta_l$, respectively. Define
\begin{align*} 
	\begin{aligned}
		\MSJack{\alpha_l}{}:=\diag\big(\hat J_{\alpha_l}(\hat\bxi^{(1)}_{\alpha_l}),\ldots,\hat J_{\alpha_l}(\hat\bxi^{(N_{\alpha_l}}_{\alpha_l})\big),\qquad l=1,2,3,
	\end{aligned}
\end{align*}
where $\hat\bxi^{(1)}_{\alpha_l},\dots,\hat\bxi^{(N_{\alpha_l})}_{\alpha_l}$ are the nodes on surface $\xi_l=\alpha_l$  and $N_{\alpha_l}=N/N_l$ is the number of nodes on this surface. 
$\MSJack{\beta_l}{}$ is defined identically. Without loss of generality, we assume that all diagonal entries of  volume Jacobian and surface matrices are positive. 

Then SATs (in $\alpha_l$ direction) for the physical boundary conditions \eqref{eq07b} are given as follows
\begin{align}\label{eqS3}
	\begin{aligned}
		{\mat S}_{\alpha_l}&=-{\Pnorm{}}^{-1}\bigg(\dfrac12\R{\alpha_l}\Tr(|\Normal{\alpha_l}|-\Normal{\alpha_l})\MSJack{\alpha_l}{}\Pnorm{\perp\xi_l}\R{\alpha_l}\w
		\\&\qquad\qquad-\epsilon\sum_{a=1}^3\R{\alpha_l}\Tr\Pnorm{\perp\xi_l}\R{\alpha_l}\Cmat{l,a}{}\D{\xi_a}\w-\R{\alpha_l}\Tr\MSJack{\alpha_l}{}\Pnorm{\perp\xi_l}\g_{\alpha_l}\bigg),\qquad l=1,2,3,
	\end{aligned}
\end{align}
where
\(\zeta_{\alpha_l}:=-\sum_{m=1}^3a_m \dfrac{J}{\hat J_{\alpha_l}}\dfrac{\partial\xi_l}{\partial x_m}\),
and
\[\Normal{\alpha_l}:=\diag\big( \zeta_{\alpha_l}(\hat\bxi_{\alpha_l}^{(1)}),\ldots, \zeta_{\alpha_l}(\hat\bxi_{\alpha_l}^{(N_{\alpha_l})}) ),\qquad \g_{\alpha_l}:=\left[g(\hat\bxi^{(1)}_{\alpha_l}, t),\ldots,g(\hat\bxi^{(N_{\alpha_l})}_{\alpha_l}, t)\right]^T.\]
In same manner ${\mat S}_{\beta_l}$ for $l=1,2,3$ are constructed. Then a semi-discretized SBP-SAT scheme for \eqref{eq07a}-\eqref{eq07b} is given by
\begin{align*}
\begin{aligned}
\MJack{}\,\partial_{\delta t}\w
&=-\dfrac12\sum_{l,m=1}^3 a_m\bigg\{\D{\xi_l}\MJdxildxmk{}{m}{}+\MJdxildxmk{}{m}{}\D{\xi_l}\bigg\}\w
+\epsilon\sum_{l,a=1}^3\D{\xi_l}\Cmat{l,a}{}\D{\xi_a}\w
+\sum_{l=1}^3 {\mat S}_{\alpha_l}
+\sum_{l=1}^3 {\mat S}_{\beta_l}.
\end{aligned}
\end{align*}

\subsection{SBP--SAT partitioned scheme in 3D curvilinear coordinates}\label{sebsec:3DCurvSBP}
To impose the interface conditions \eqref{eq03c}, we construct multi-dimensional interface SAT terms in analogy with the coupling strategy in classical SBP multi-block formulations (see \cite{Lindstrom2010} and \cite{Lundquist2018}).  Without loss of generality, we assume that interface surface $\xi_1=\beta_1$ is the surface associated with  left physical  domain and the surface $\xi_1=\alpha_1$ is associated with the right physical domain. Note that at the interface, $\n_L=(1,0,0)$ and  $\n_R=-\n_L$, and the metric Jacobians are the same. We set \[\MSJack{\Sigma}{}:=\MSJack{\beta_1}{}=\MSJack{\alpha_1}{}.\] 
Given $\v^*$, define the SATs at interface for left PDE in following way.
\begin{align}\label{eqS1}
	\begin{aligned}
		{\mat S}^{{\mat L},1}_{\Sigma}&=\Pnorm{}^{-1}\R{\beta_1}\Tr\MSJack{\Sigma}{}\Pnorm{\perp\xi_1}\big(\R{\beta_1}\w^{k+1}-\R{\alpha_1}\v^*\big),
		\\{\mat S}^{{\mat L},2}_{\Sigma}&=\Pnorm{}^{-1}\left(\epsilon\sum_{a=1}^3\D{\xi_a}^T\Cmat{1,a}{\mat L}\R{\beta_1}\Tr\right)\MSJack{\Sigma}{}^{-1}\Pnorm{\perp\xi_1}
		\\&\qquad\qquad\qquad\qquad
        \cdot\left(\epsilon\sum_{a=1}^3\R{\beta_l}\Cmat{1,a}{\mat L}\D{\xi_a}\w^{k+1}-\kappa\sum_{a=1}^3\R{\alpha_1}\Cmat{1,a}{\mat R}\D{\xi_a}\v^{*}\right).
	\end{aligned}
\end{align}
In similar way, given $\w^*$, define the SATs at interface for right PDE:
\begin{align}\label{eqS2}
	\begin{aligned}
		{\mat S}^{{\mat R},1}_{\Sigma}&=\Pnorm{}^{-1}\R{\alpha_1}\Tr\MSJack{\Sigma}{}\Pnorm{\perp\xi_1}\big(\R{\alpha_1}\v^{k+1}-\R{\beta_1}\w^{*}\big),	
		\\{\mat S}^{{\mat R},2}_{\Sigma}&=\Pnorm{}^{-1}\left(\kappa\sum_{a=1}^3\D{\xi_a}^T\Cmat{1,a}{\mat R}\R{\alpha_1}\Tr\right){\MSJack{\Sigma}{}}^{-1}\Pnorm{\perp\xi_1}
		\\&\qquad\qquad\qquad\qquad\cdot
        \left(\kappa\sum_{a=1}^3\R{\alpha_1}\Cmat{1,a}{\mat R}\D{\xi_a}\v^{k+1}-\epsilon\sum_{a=1}^3\R{\beta_1}\Cmat{1,a}{\mat L}\D{\xi_a}\w^{*}\right),
		\\{\mat S}^{{\mat R},3}_{\Sigma}&=\Pnorm{}^{-1}\R{\alpha_1}\Tr\Pnorm{\perp\xi_1}\bigg(\epsilon\sum_{a=1}^3\R{\beta_1}\Cmat{1,a}{\mat L}\D{\xi_a}\w^{*}-\kappa\sum_{a=1}^3\R{\alpha_1}\Cmat{1,a}{\mat R}\D{\xi_a}\v^{k+1}\bigg).
	\end{aligned}
\end{align}
A fully discretize partitioned scheme using SBP-SAT operators for the CHT problem consisting of linear advection-diffusion equation \eqref{eq03a} and heat equation \eqref{eq03b} coupled at $\Sigma=\partial \Omega_L\cap \partial \Omega_R$ with interface conditions \eqref{eq03c} is  given as follows:  with given initial guess for $\v^*$ and $\w^*$ and given SAT parameters  $\gamma_1^{\mat L,\,\mat R},\gamma_2^{\mat L,\, \mat R}\ge 0$, solve at each time step

\begin{itemize}
	\item solve the left sub-problem: given $\v^*$, find $\w^{k+1}$ such that
	\begin{align}\label{eq12a}
		\begin{aligned}
			&\MJack{\mat L}\partial_{\delta t} \w^{k+1}+\dfrac12\sum_{l,m=1}^3 a_m\bigg\{\D{\xi_l}\MJdxildxmk{l}{m}{\mat L}+\MJdxildxmk{l}{m}{\mat L}\D{\xi_l}\bigg\}\w^{k+1}
			+\gamma_1^{\mat L}{\mat S}^{{\mat L},1}_{\Sigma}+\gamma_2^{\mat L} {\mat S}^{{\mat L},2}_{\Sigma}
			\\&\qquad\qquad\qquad=\epsilon\sum_{l,a=1}^3\D{\xi_l}\Cmat{l,a}{\mat L}\D{\xi_a}\w^{k+1}+\sum_{l=1}^3 {\mat S}_{\alpha_l}^{\mat L}+\sum_{l=2}^3 {\mat S}_{\beta_l}^{\mat L},
		\end{aligned}
	\end{align}
	\item update the interface data $\w^*$.
	\item solve the right sub-problem: given $\w^*$, find $\v^{k+1}$ such that
	\begin{align}\label{eq12b}
		\begin{aligned}
			\MJack{\mat R}\partial_{\delta t} \v^{k+1}+\gamma_1^{\mat R}{\mat S}^{{\mat R},1}_{\Sigma}+\gamma_2^{\mat R}{\mat S}^{{\mat R},2}_{\Sigma}
			+{\mat S}^{{\mat R},3}_{\Sigma}
			\;=\kappa\sum_{l,a=1}^3\D{\xi_l}\Cmat{l,a}{\mat R}\D{\xi_a}\v^{k+1}+\sum_{l=1}^3 {\mat S}_{\beta_l}^{\mat R}+\sum_{l=2}^3 {\mat S}_{\alpha_l}^{\mat R},
		\end{aligned}
	\end{align}
	\item update the interface data $\v^*$.
\end{itemize}
where the SATs for the boundaries $\partial \Omega_{\mat L}\backslash\Sigma$ and $\partial\Omega_{\mat R}\backslash\Sigma$ are given by ${\mat S}_{\alpha_l}^{\mat L},\; {\mat S}_{\beta_l}^{\mat L},\;{\mat S}_{\beta_l}^{\mat R}$ and ${\mat S}_{\alpha_l}^{\mat R}$ for $l=1,2, 3$ and they are constructed in same manner as \eqref{eqS3} and the boundary vectors $\g_{\alpha_l}^{k+1},\, \h_{\alpha_l}^{k+1},\, \g_{\beta_l}^{k+1}$ and $\h_{\beta_l}^{k+1}$ are defined similarly.
The SATs for interface are given by \eqref{eqS1} and \eqref{eqS2}.

The proposed partitioning scheme is fully defined in Algorithm \eqref{Alg:part}. In the given algorithm, the non-zero source vector $\f_{\mat L}^{k+1}$ is added to the scheme for the propose of using manufactured solution in numerical experiment and it is given by
\[\f_{\mat L}^{k+1}=[f_{ L}(\bm{\xi}^{(1)},t_{k+1}),\dots f_{ L}(\bm{\xi}^{(N)},t_{k+1})]^T,\] where $f_L(\x,t)$  is non zero given source function for left sub-problem, $\f_{\mat R}^{k+1}$ is defined similarly. 

Our approach follows a weakly-coupled method, where only a small fixed number of iterations are performed in each time slab. In contrast, a strongly-coupled scheme involves iterative solutions of two sub-problems until both interface conditions meet a specified tolerance. 
Furthermore, we assume that the initial guess for the interface value at each time step is calculated by a time extrapolation of order $m=1$ or $m=2$ (EXT$m$), described by the following formula,
\begin{align*}
	\v^*=\,\mbox{Extrapolate}(\v^k,\v^{k-1})=\begin{cases}
		\v^k,&\quad m=1,\\
		2\v^k-\v^{k-1}, &\quad m=2.
	\end{cases}
\end{align*}
\begin{algorithm}[ht!]\caption{Weakly coupled partitioned SBP-SAT for CHT using BE-EXT$s$}\label{Alg:part}
	\begin{algorithmic}[1]
		\State Start with given SAT parameters $\gamma_1^{\mat L,\, \mat R},\,\gamma_2^{\mat L,\, \mat R}\ge0$ and  $\v^0=\q_L$ and $\w^0=\q_R$
		\For {$k=1,\ldots, N_t$}
		\State Let $c=1/\delta t,\, \G_{\mat L}^{k+1}=c\MJack{\mat L}\w^k,\; \G_{\mat R}^{k+1}=c\MJack{\mat R}\v^k$
		\State Let $\v^*=\mbox{Extrapolate}(\v^{k},\, \v^{k-1})$ if $k\ge 2$ else $\v^*=\v^k$.
		\For{$\ell= 1, \ldots, N_{\ell oop}$} 
		\State  Solve  left sub--problem for $\w^{(\ell)}$
		\begin{align*}
			\begin{aligned}		
				&c\MJack{\mat L}\w^{(\ell)}+\dfrac12\sum_{l,m=1}^3 a_m\bigg\{\D{\xi_l}\MJdxildxmk{l}{m}{\mat L}+\MJdxildxmk{l}{m}{\mat L}\D{\xi_l}\bigg\}\w^{(\ell)}
				+\gamma_1^{\mat L}{\mat S}^{{\mat L},1}_{\Sigma}+\gamma_2^{\mat L}{\mat S}^{{\mat L},2}_{\Sigma}
				\\&\qquad\qquad\;=\epsilon\sum_{l,a=1}^3\D{\xi_l}\Cmat{l,a}{\mat L}\D{\xi_a}\w^{(\ell)}
				+\sum_{l=1}^3{\mat S}_{\alpha_l}^{\mat L}+\sum_{l=2}^3{\mat S}_{\beta_l}^{\mat L}
				+\G_{\mat L}^{k+1}+\MJack{\mat L}\f_{\mat L}^{k+1}
			\end{aligned}
		\end{align*}
		\State Update the interface $\w^*=\w^{(\ell)}$
		\State Solve right sub--problem for $\v^{(\ell)}$
		\begin{align*}
			\begin{aligned}
				&c\MJack{\mat R}\v^{(\ell)}+\gamma_1^{\mat R}{\mat S}^{{\mat R},1}_{\Sigma}+\gamma_2^{\mat R}{\mat S}^{{\mat R},2}_{\Sigma}
				+{\mat S}^{{\mat R},3}_{\Sigma}
				\\&\qquad\qquad=\kappa\sum_{l,a=1}^3\Pnorm{}\D{\xi_l}\Cmat{l,a}{\mat R}\D{\xi_a}\v^{(\ell)}
				+\sum_{l=1}^3{\mat S}_{\beta_l}^{\mat R}+\sum_{l=2}^3{\mat S}_{\alpha_l}^{\mat R}	+\G_{\mat R}^{k+1}+\MJack{\mat R}\f_{\mat R}^{k+1}.
			\end{aligned}
		\end{align*}
		\State Update the interface $\v^*=\v^{(\ell)}$
		\EndFor
		\State Set $\v^{k+1}=\v^{(N_{\ell oop})}$ and $\w^{k+1}=\w^{(N_{\ell oop})}$
		\EndFor
	\end{algorithmic}
\end{algorithm}

\subsection{Discrete stability analysis of partitioned scheme in the 3D curvilinear setting}\label{subsec:alg3D}
We now extend the  1D stability analysis in Section~\ref{subsec:Sta1D} to the 3D case on curvilinear grids.
In the subsequent stability proof, we use the following local discrete trace inequality, which extends Lemma~\ref{lemTrace1D} to the curvilinear setting.
Let ${\mathcal{K}}_{\beta_l}$ denote the set of all indices corresponding to the points along the surface $\hat{\Gamma}_{\beta_l}$ and $\mathcal{K}$ denote the set of all indices corresponding to all grid points on the reference domain including its boundary.
\begin{lem}\label{lemTrace}
	Let $\fnc{Z}$ be a real-valued function in the reference domain $\hat \Omega\in\Rs^d$ and $\{\bxi^{(j)}\}$ for $j=0,\ldots, N$ be given distinct grid points in $\hat \Omega$.  
	Let $\z=[\fnc{Z}(\bxi^{(1)}),\ldots,\fnc{Z}(\bxi^{(N)})]^T.$
	Fix $l\in \mathcal L=\{1,\dots, 2d\}$. Let $\tilde\rho_l=\max_{j\in {\mathcal K}_{\beta_l}}\{(\MSJack{\beta_l}{}\Pnorm{\perp\xi_l})_{jj}\}$ and $\tilde \rho= \min_{j\in {\mathcal K}}\{(\MJack{}\Pnorm{})_{jj}\}$.
	Then
	\begin{align}\label{lem}
		\begin{aligned}
			\big\|\R{\beta_l}\z\big\|^2_{\MSJack{\beta_l}{}\Pnorm{\perp\xi_l}}\le \dfrac{1}{ \rho}\big\|\z\big\|^2_{\MJack{}\Pnorm{}},
		\end{aligned}	 
	\end{align}
	where \begin{equation}\label{rho} 
		\rho= \dfrac{\tilde \rho}{\max_{l\in \mathcal L}\{\tilde \rho_l \}}.
	\end{equation}
\end{lem}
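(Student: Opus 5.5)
The plan is to mimic the proof of Lemma~\ref{lemTrace1D}: exploit the diagonal structure of all the weight matrices involved and compare the surface sum with the volume sum entrywise. Since $\Pnorm{}$, $\Pnorm{\perp\xi_l}$, $\MJack{}$ and $\MSJack{\beta_l}{}$ are all diagonal with positive entries (we take diagonal-norm SBP operators, as in Lemma~\ref{lemTrace1D}), both sides of \eqref{lem} are weighted sums of squares of the entries of $\z$.

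First, expand the left-hand side. The restriction $\R{\beta_l}=\e_{\beta_l}\otimes\I{}\otimes\I{}$ (with the identity factors in the appropriate positions) simply selects the entries of $\z$ whose indices lie in $\mathcal{K}_{\beta_l}$. Therefore
\[
\big\|\R{\beta_l}\z\big\|^2_{\MSJack{\beta_l}{}\Pnorm{\perp\xi_l}}=\sum_{j\in\mathcal{K}_{\beta_l}}\big(\MSJack{\beta_l}{}\Pnorm{\perp\xi_l}\big)_{jj}\,z_j^2\le\tilde\rho_l\sum_{j\in\mathcal{K}_{\beta_l}}z_j^2,
\]
where the surface entries are identified with their volume indices.

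Second, bound the volume norm from below, dropping all non-surface nodes:
\[
\big\|\z\big\|^2_{\MJack{}\Pnorm{}}=\sum_{j\in\mathcal{K}}\big(\MJack{}\Pnorm{}\big)_{jj}\,z_j^2\ge\tilde\rho\sum_{j\in\mathcal{K}}z_j^2\ge\tilde\rho\sum_{j\in\mathcal{K}_{\beta_l}}z_j^2.
\]

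Combining the two displays gives
\[
\big\|\R{\beta_l}\z\big\|^2_{\MSJack{\beta_l}{}\Pnorm{\perp\xi_l}}\le\frac{\tilde\rho_l}{\tilde\rho}\,\|\z\|^2_{\MJack{}\Pnorm{}}.
\]
Since $\tilde\rho_l\le\max_{l\in\mathcal L}\tilde\rho_l$, this is bounded by $\|\z\|^2_{\MJack{}\Pnorm{}}/\rho$ with $\rho$ as in \eqref{rho}. The same bound then holds uniformly over all $2d$ faces, so the $\alpha_l$ faces, which the index set $\mathcal L=\{1,\dots,2d\}$ implicitly includes, are covered identically.

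The only real obstacle is bookkeeping. One must check that $\R{\beta_l}$ acts as a pure selection, which holds because $\e_{\beta_l}$ is a unit row vector. One must also confirm the diagonality and positivity of the weight matrices: this requires assuming diagonal-norm $\PoneD{\xi_l}$, as in the 1D lemma, together with the standing positivity assumption on the Jacobian entries. No SBP property is needed beyond this. In particular, the estimate is crude: it does not exploit the smallness of boundary weights relative to the surface weights. As in the 1D case, it yields $\rho=\mathcal{O}(h)$ on uniform reference grids, because the volume weight carries an extra factor of the boundary quadrature weight relative to the surface weight.
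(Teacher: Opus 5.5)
Your proof is correct and follows essentially the same route as the paper: write the surface norm as a weighted sum over $\mathcal{K}_{\beta_l}$, bound the weights by $\tilde\rho_l\le\max_{l}\tilde\rho_l$, and bound the surface sum of squares by $\tilde\rho^{-1}$ times the full volume norm. Your remark that diagonal-norm $\PoneD{\xi_l}$ and positive Jacobian entries are required is also correct; the paper uses this hypothesis without stating it.
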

\begin{proof}
	\begin{align*}
		\begin{aligned}
			\big\|\R{\beta_l}\z\big\|^2_{\MSJack{\beta_l}{}\Pnorm{\perp\xi_l}}&=\z^T\R{\beta_l}\Tr\MSJack{\beta_l}{}\Pnorm{\perp\xi_l}\R{\beta_l}\z=\sum_{j\in\mathcal{K}_{\beta_l}}\left(\MSJack{\beta_l}{}\Pnorm{\perp\xi_l}\right)_{jj}\bm{z}_{j}^{2}
			\\&\le \tilde\rho_l \sum_{j\in {\mathcal{K}}_{\beta_l}} \bm{z}_{j}^{2}
			\le \dfrac{\max_{l\in \mathcal L}\{\tilde \rho_l \}}{\tilde \rho} \sum_{j\in {\mathcal{K}}} (\MJack{}\Pnorm{})_{jj}\bm{z}_{j}^{2}= \dfrac{1}{ \rho}\|\z\|^2_{\MJack{}\Pnorm{}}.
		\end{aligned}
	\end{align*}	
\end{proof}
The scaling factor \( \rho \) in \eqref{lem} depends on the size of domain and the mesh distribution. It connects norms over the boundaries (surface norms) to norms over the interior of the domain (volume norms) via the discrete local trace inequality given by~\eqref{lem}. The order of accuracy of the SBP operators appearing in \( \Pnorm{} \), the weight matrix, is reflected in \( \rho \). The magnitude of \(\rho \) is of order of  mesh resolution. 

Furthermore, to simplify the notations, we introduce the following interface stabilization energies. At time step $t_{j}$, $1\le j\le k+1 $ we define 
\begin{align}\label{E1}
	\begin{aligned}\calE_1^{j}=\left\|\R{\alpha_1}\v^{j}-\R{\beta_1}\w^{j}\right\|^2_{\Sigma},
	\end{aligned}
\end{align}
and the energy related to dissipation at interface by
\begin{align}\label{E2}
	\begin{aligned}\calE_2^{j}=\bigg\|\kappa\sum_{a=1}^3\R{\alpha_1}\Cmat{1,a}{\mat R}\D{\xi_a}\v^{j}-\epsilon\sum_{a=1}^3\R{\beta_1}\Cmat{1,a}{\mat L}\D{\xi_a}\w^{j}\bigg\|^2_{\bar \Sigma},
	\end{aligned}
\end{align}
where
$\|\cdot\|_{\Sigma}:=\|\cdot\|_{\MSJack{\Sigma}{}\Pnorm{\perp\xi_1}}$ and $
\|\cdot\|_{\bar \Sigma}:=\|\cdot\|_{\MSJack{\Sigma}{}^{-1}\Pnorm{\perp\xi_1}}$. Also we define 
\begin{align}\label{F}
	\begin{aligned}
		\calF_1^{j}=\left\|\R{\alpha_1}\v^{j}\right\|^2_{\Sigma},\qquad\qquad
		\calF_2^{j}=\bigg\|\sum_{a=1}^3\R{\alpha_1}\Cmat{1,a}{\mat R}\D{\xi_a}\v^{j}\bigg\|^2_{\bar \Sigma}.
	\end{aligned}
\end{align}
Also denote by $\calG^{j}$ the energy accumulated from the information from the physical boundaries at left and right subdomains 
{\setlength{\abovedisplayskip}{7pt}
 \setlength{\belowdisplayskip}{7pt} 
\begin{align}\label{data}
	\begin{aligned}
		&\calG^{0}=\left\|\q_{\mat L}\right\|^2_{\mat L} +\left\|\q_{\mat R}\right\|^2_{\mat R}+\delta t\gamma_1\left\|\R{\alpha_1}\q_{\mat R}\right\|^2_{\Sigma}+\kappa^2\delta t\gamma_2
		\bigg\|\sum_{a=1}^3\R{\alpha_1}\Cmat{1,a}{\mat R}\D{\xi_a}\q_{\mat R}\bigg\|^2_{\bar \Sigma},\\
		&\calG^{j}=\delta t\sum_{l=1}^3\dfrac1{\zeta_{\alpha_l}}\left\|\g_{\alpha_l}^{j}\right\|^2_{\MSJack{\alpha_l}{\mat L}\Pnorm{\perp\xi_l}}+\delta t\sum_{l=2}^3\dfrac1{\zeta_{\beta_l}}\left\|\g_{\beta_l}^{j}\right\|^2_{\MSJack{\beta_l}{\mat L}\Pnorm{\perp\xi_l}}
		\\&\qquad\quad+\delta t\sum_{l=1}^3\dfrac1{2}\left\|\h_{\beta_l}^{j}\right\|^2_{\MSJack{\beta_l}{\mat R}\Pnorm{\perp\xi_l}}+\delta t\sum_{l=2}^3\dfrac1{2}\left\|\h_{\alpha_l}^{j}\right\|^2_{\MSJack{\alpha_l}{\mat R}\Pnorm{\perp\xi_l}}, \qquad  1\le j\le k+1,
	\end{aligned}
\end{align}}
where 
\begin{align}\label{zeta}
	\begin{aligned}
		\zeta_{\alpha_l}=\max_{j\in {\cal J}_{\alpha_l}}\{{|(\Normal{\alpha_l})_{jj}|}\},\qquad l=1,2,3,\quad
		\mbox{and}\;\qquad\zeta_{\beta_l}=\max_{j\in {\cal J}_{\beta_l}}\{{|(\Normal{\beta_l})_{jj}|}\},\qquad l=2,3.
	\end{aligned}
\end{align}

Throughout the remainder of this article, we assume that the constant velocity vector $\a$ is defined such that $\a\cdot \n_{\mat L} =0$  at interface. Let $\rho_{\mat L}$ and $\rho_{\mat R}$ denote the positive constants provided by Lemma \eqref{lemTrace} for the left and right subdomains, respectively.  We denote
\begin{align}\label{gamma}
	\begin{aligned}
		\gamma_1=\gamma_1^{\mat L}=\gamma_1^{\mat R},\qquad 
        \gamma_2=\min\{\gamma_2^{\mat L},\,\gamma_2^{\mat R}\},\qquad
        \bar\gamma_2=\max\{\gamma_2^{\mat L}, \, \gamma_2^{\mat R}\},\qquad
        \hat \gamma_2=|\gamma_2^{\mat L}-\gamma_2^{\mat R}|.	
\end{aligned}
\end{align} In the analysis that follows, we consider a weakly coupled scheme without iteration, i.e., \( N_{\ell oop} = 1 \). For simplifying the notation, also let $\|\cdot\|_{\mat L}:=\|\cdot\|_{\MJack{\mat L}\Pnorm{}}$ and $\|\cdot\|_{\mat R}:=\|\cdot\|_{\MJack{\mat R}\Pnorm{}}$.

\begin{thm}[Stability of the partitioned BE-EXT1 scheme in 3D curvilinear]\label{thm:BE}
Given the partitioned scheme \eqref{eq12a} and \eqref{eq12b}.
Let $\v^*=\v^{k}$, $\w^*=\w^{k+1}$ for $k\ge 0$. 
Then, under the conditions 
	\begin{subequations}
		\begin{align}
			\dfrac{\epsilon}{\rho_{\mat L}}\le\gamma_1&\le \dfrac{C_1}{\delta t},\label{a1}
			\\ \gamma_2&\le \dfrac{C_2}{\kappa^2\delta t},\label{a2}
			\\\hat\gamma_2&\le\dfrac{\min\{\rho_{\mat L},\, \rho_{\mat R}\}}{\max\{\epsilon,\,\kappa\}} \label{a3},
		\end{align}
	\end{subequations}
with $C_1$ and $C_2$ are given positive constants, the following energy estimate holds
	\begin{align}\label{eq100}
		\begin{aligned}
			\|\w^{k+1}\|^2_{\mat L}+ \|\v^{k+1}\|^2_{\mat R}+\delta t\gamma_2\sum_{j=1}^{k+1}\calE_2^{j}+\delta t\,\gamma_1\calF_1^{k+1}+\kappa^2\delta t\,\gamma_2\calF_2^{k+1}
			\le \mathcal{S}^{k+1},
		\end{aligned}
	\end{align}for $0\le k\le N_t-1$, where $\mathcal{S}^{k+1}=\sum_{j=0}^{k+1}\calG^{j}$.
\end{thm}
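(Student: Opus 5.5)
The plan is to replicate the 1D argument of Theorem~\ref{thm:1D_part} with the volume norms $\|\cdot\|_{\mat L}$, $\|\cdot\|_{\mat R}$ and the surface norms $\|\cdot\|_{\Sigma}$, $\|\cdot\|_{\bar\Sigma}$. First multiply \eqref{eq12a} on the left by ${\w^{k+1}}\Tr\Pnorm{}$ and \eqref{eq12b} by ${\v^{k+1}}\Tr\Pnorm{}$. Since $\MJack{}$ and $\Pnorm{}$ are diagonal and commute, the time derivative term gives, exactly as in the 1D proof,
\[
{\w^{k+1}}\Tr\Pnorm{}\MJack{\mat L}\partial_{\delta t}\w^{k+1}\ge\frac{1}{2\delta t}\big(\|\w^{k+1}\|^2_{\mat L}-\|\w^k\|^2_{\mat L}\big).
\]
The skew-symmetric split advection term uses $\Pnorm{}\D{\xi_l}=\Q{\xi_l}$, $\Q{\xi_l}+\Q{\xi_l}\Tr=\E{\xi_l}$, and the discrete metric identities. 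It collapses to surface terms weighted by $\MSJack{}{}\Normal{}$. On the interface face $\xi_1=\beta_1$ this term vanishes because $\a\cdot\n_{\mat L}=0$. On the physical faces it combines with ${\mat S}_{\alpha_l}^{\mat L}$ and ${\mat S}_{\beta_l}^{\mat L}$, and a Young inequality leaves only the data contributions $\frac{1}{\zeta}\|\g\|^2$ that make up $\calG^{k+1}$.

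For the viscous term, the SBP property in each direction gives
\[
{\w}\Tr\Pnorm{}\sum\D{\xi_l}\Cmat{l,a}{}\D{\xi_a}\w=\sum_l \w\Tr\E{\xi_l}\sum_a\Cmat{l,a}{}\D{\xi_a}\w-\sum_{l,a}(\D{\xi_l}\w)\Tr\Pnorm{}\Cmat{l,a}{}\D{\xi_a}\w .
\]
The last sum is nonpositive because the metric tensor $\mat C$ is positive semidefinite pointwise. The boundary parts on physical faces cancel against the $\epsilon$ and $\kappa$ pieces of the boundary SATs. On the right subdomain, ${\mat S}^{{\mat R},3}_\Sigma$ cancels the $\kappa$ interface flux and replaces it by the $\epsilon$ flux of $\w^{k+1}$, just as in 1D. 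The net interface residue is then the analogue of the last term in \eqref{d10}:
\[
2\delta t\,\epsilon\,(\R{\beta_1}\sum_a\Cmat{1,a}{\mat L}\D{\xi_a}\w^{k+1})\Tr\Pnorm{\perp\xi_1}(\R{\beta_1}\w^{k+1}-\R{\alpha_1}\v^{k+1}).
\]
I bound this by inserting $\MSJack{\Sigma}{}^{1/2}\MSJack{\Sigma}{}^{-1/2}$ and applying Young's inequality. This produces $\frac{\delta t\epsilon}{\rho_{\mat L}}\calE_1^{k+1}$ plus $\delta t\,\epsilon\rho_{\mat L}$ times the $\bar\Sigma$-norm of the flux. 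Lemma~\ref{lemTrace}, together with the positive semidefiniteness of $\mat C$, then bounds that flux norm by the volume dissipation $\sum_{l,a}(\D{\xi_l}\w)\Tr\Pnorm{}\Cmat{l,a}{}\D{\xi_a}\w$. I expect this to be the main obstacle. The flux involves all three directions $a$ through $\Cmat{1,a}{}$, so I plan to use Cauchy--Schwarz in the $\mat C$ inner product to reduce it to the dissipation form, possibly absorbing harmless constants into $\rho_{\mat L}$.

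Next come the interface penalty terms, which use $\v^*=\v^k$ and $\w^*=\w^{k+1}$. The $\gamma_1$ terms reproduce \eqref{d12}, and the polarization step of \eqref{d15} yields $\delta t\gamma_1(\calF_1^{k}-\calF_1^{k+1})+\delta t\gamma_1\calE_1^{k+1}$. Together with \eqref{a1} this absorbs the $\frac{\epsilon}{\rho_{\mat L}}\calE_1$ term. The $\gamma_2$ terms have different weights $\gamma_2^{\mat L}$ and $\gamma_2^{\mat R}$. I will write them as $\gamma_2$ times the symmetric combination, handled as in \eqref{d13} and \eqref{d16}, which gives $\delta t\gamma_2\calE_2^{k+1}$ and $\kappa^2\delta t\gamma_2(\calF_2^k-\calF_2^{k+1})$. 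What remains is a term with coefficient at most $\hat\gamma_2$ involving the product of one subdomain's flux with the flux mismatch. I bound this remainder by Young's inequality and Lemma~\ref{lemTrace} on the respective subdomain. Condition \eqref{a3} makes it controllable by the leftover volume dissipation $\delta t\epsilon\|\cdot\|$ and $2\delta t\kappa\|\cdot\|$ and by half of $\calE_2$. This is the second delicate bookkeeping step.

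Finally, I drop the remaining nonnegative dissipation terms and obtain the one-step inequality
\[
\|\w^{k+1}\|_{\mat L}^2+\|\v^{k+1}\|_{\mat R}^2+\delta t\gamma_2\calE_2^{k+1}+\delta t\gamma_1\calF_1^{k+1}+\kappa^2\delta t\gamma_2\calF_2^{k+1}\le(\text{same at }k)+\calG^{k+1}.
\]
The $\calE_2^{k+1}$ term appears only on the left, so it accumulates. Summing telescopically from $0$ to $k$ with $\w^0=\q_{\mat L}$ and $\v^0=\q_{\mat R}$ gives \eqref{eq100} with $\mathcal S^{k+1}=\sum_{j\le k+1}\calG^j$, where $\calG^0$ absorbs the initial $\calF$ terms. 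The upper bounds $\gamma_1\le C_1/\delta t$ and $\gamma_2\le C_2/(\kappa^2\delta t)$ make those initial interface terms $O(1)$, so the estimate is a genuine stability bound.
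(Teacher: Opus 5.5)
Your proposal follows the paper's proof in Appendix~\ref{apdB} essentially step for step. It uses the same energy multiplication and SBP reduction of the advection and viscous terms, the same Young-plus-Lemma~\ref{lemTrace} bound of the interface residual absorbed through \eqref{a1}, the same split of the $\gamma_2$ terms into a $\gamma_2$-symmetric part and a $\hat\gamma_2$ remainder controlled by \eqref{a3}, polarization of the lagged interface terms, and telescoping. The step you flag as the main obstacle closes with exactly the $\rho_{\mat L}$ of Lemma~\ref{lemTrace}: pointwise Cauchy--Schwarz in the $\mat{C}$ inner product, with $g_a$ the nodal value of $\D{\xi_a}\w^{k+1}$, gives $\big(\sum_a \mat{C}_{1,a} g_a\big)^2 \le \mat{C}_{1,1}\, g\Tr \mat{C} g$ with $\mat{C}_{1,1}=\hat J_{\beta_1}^2/J$, and the resulting weight ratio $(\hat J_{\beta_1}\Pnorm{\perp\xi_1})_{jj}/(J\Pnorm{})_{jj}$ is at most $1/\rho_{\mat L}$ by \eqref{rho}, so $\rho_{\mat L}\big\|\sum_a \R{\beta_1}\Cmat{1,a}{\mat L}\D{\xi_a}\w^{k+1}\big\|^2_{\bar\Sigma}\le \|\D{}\w^{k+1}\|^2_{\mat L}$ with no constants to absorb.
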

\begin{proof}
A proof of this theorem is given in Appendix \ref{apdB}.
\end{proof}

\textit{\emph{\bf Note 5.}} In Theorem \ref{thm:BE} we assume that the SAT parameters on both sides of the interface for imposing the Nuemann interface condition are chosen independently. This allows us to analyze the benefits of adjusting mesh resolution in normal direction to the  interface when the diffusion constants of the two media differ significantly. The conditional stability of the partitioned scheme, under the  constraint that $\gamma_1 := \gamma_1^{\mat L} = \gamma_1^{\mat R}$ and $\gamma_2^{\mat L} = \gamma_2^{\mat R} = 0$, can then be expressed as following.

\begin{corollary}\label{thm:BE0}
	Assume the partitioned scheme \eqref{eq12a} and \eqref{eq12b} with  $\gamma_2^{\mat L} = \gamma_2^{\mat R} = 0$. Let $\v^*=\v^{k}$, $\w^*=\w^{k+1}$. Then, under the assumptions
	\begin{subequations}
		\begin{align}
			\dfrac{\epsilon}{2\rho_{\mat L}}\le \gamma_1\le \dfrac{C_1}{\delta t},\label{con1}
		\end{align}
	\end{subequations}
	with $C_1$ is given positive constant, the following energy estimate holds
	\begin{align}\label{eq101}
		\begin{aligned}
			& \|\w^{k+1}\|^2_{\mat L}+ \|\v^{k+1}\|^2_{\mat R}+\delta t\gamma_1\calF_1^{k+1}
			\le \mathcal{S}^{k+1},
		\end{aligned}
	\end{align}for $0\le k\le N_t-1$, where $\mathcal{S}^{k+1}=\sum_{j=1}^{k+1}\calG^{j}+\left\|\q_{\mat L}\right\|^2_{\mat L} +\left\|\q_{\mat R}\right\|^2_{\mat R}+\delta t\gamma_1\left\|\R{\alpha_1}\q_{\mat R}\right\|^2_{\Sigma}$.
\end{corollary}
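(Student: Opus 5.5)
The plan is to follow the proof of Theorem~\ref{thm:1D_part} step by step in the 3D curvilinear setting, setting $\gamma_2^{\mat L}=\gamma_2^{\mat R}=0$ from the outset. This removes every term involving ${\mat S}^{{\mat L},2}_{\Sigma}$, ${\mat S}^{{\mat R},2}_{\Sigma}$, $\calE_2$ and $\calF_2$.

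\textbf{Step 1: per-subdomain energy identities.} Left-multiply \eqref{eq12a} by ${\w^{k+1}}\Tr\Pnorm{}$ and \eqref{eq12b} by ${\v^{k+1}}\Tr\Pnorm{}$. For the time derivative, use the backward Euler inequality ${\w^{k+1}}\Tr\Pnorm{}\MJack{\mat L}\partial_{\delta t}\w^{k+1}\ge\frac{1}{2\delta t}(\|\w^{k+1}\|^2_{\mat L}-\|\w^{k}\|^2_{\mat L})$, which holds because $\MJack{}$ and $\Pnorm{}$ are diagonal and commute. The skew-symmetric advection split, the discrete metric identities and the SBP property reduce the advection term to surface terms. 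At the interface these vanish because $\a\cdot\n_{\mat L}=0$; on the physical boundaries they combine with ${\mat S}_{\alpha_l},{\mat S}_{\beta_l}$.

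\textbf{Step 2: diffusion and physical boundaries.} The diffusion term is rewritten using SBP as $-\epsilon\sum_{l,a}(\D{\xi_l}\w)\Tr\Pnorm{}\Cmat{l,a}{}\D{\xi_a}\w\le 0$, which is nonpositive since the metric tensor $\mathsf C$ is positive semidefinite, plus surface flux terms. On the physical boundaries, the flux terms cancel against the viscous parts of the boundary SATs, and Young's inequality on the data terms produces $\calG^{k+1}$. This matches the 1D calculation in \eqref{d6}.

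\textbf{Step 3: interface terms.} What remains at the interface is
\begin{itemize}
\item the left flux term $2\delta t\,\epsilon\,(\R{\beta_1}\w^{k+1})\Tr\Pnorm{\perp\xi_1}\sum_a\R{\beta_1}\Cmat{1,a}{\mat L}\D{\xi_a}\w^{k+1}$;
\item the right flux term, with the opposite sign, which is paired with ${\mat S}^{{\mat R},3}_{\Sigma}$ evaluated at $\w^*=\w^{k+1}$.
\end{itemize}
Exactly as in \eqref{d10}, the right flux term and ${\mat S}^{{\mat R},3}_{\Sigma}$ combine so that only $2\delta t\,\epsilon\,(\text{left flux})\cdot(\R{\beta_1}\w^{k+1}-\R{\alpha_1}\v^{k+1})$ survives. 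I bound this with Young's inequality, weighted by $\MSJack{\Sigma}{}^{\pm1}$, together with Lemma~\ref{lemTrace} applied to $\z=\D{\xi_a}\w^{k+1}$. The aim is to absorb the flux part into the volume dissipation $\delta t\,\epsilon\sum(\cdot)\Tr\Pnorm{}\mathsf C(\cdot)$, leaving $\frac{\delta t\,\epsilon}{\rho_{\mat L}}\calE_1^{k+1}$.

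This step is the main obstacle. The trace inequality bounds $\|\R{\beta_1}\z\|^2$ by $\|\z\|^2_{\mat L}/\rho_{\mat L}$, but the dissipation is weighted by $\mathsf C$ rather than $J$. I would handle this either by bounding $\mathsf C$ above on the boundary and absorbing the resulting constant into $\rho_{\mat L}$, or by applying Cauchy--Schwarz in the $\mathsf C$-norm. To reach the sharper threshold $\epsilon/(2\rho_{\mat L})$ in \eqref{con1}, instead of $\epsilon/\rho_{\mat L}$, I would spend the full dissipation $2\delta t\,\epsilon\|\cdot\|^2$ rather than half of it in the Young split. This is affordable because no $\gamma_2$ terms remain that would need part of it.

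\textbf{Step 4: penalty terms and telescoping.} Treat $\gamma_1$ as in \eqref{d12} and \eqref{d15}, now in the $\|\cdot\|_{\Sigma}$ norm:
\[
I_1=2\delta t\gamma_1\calE_1^{k+1}+2\delta t\gamma_1(\R{\beta_1}\w^{k+1})\Tr\MSJack{\Sigma}{}\Pnorm{\perp\xi_1}\big(\R{\alpha_1}\v^{k+1}-\R{\alpha_1}\v^{k}\big).
\]
The second term is bounded below by
\[
\delta t\gamma_1\big(\calF_1^{k+1}-\calF_1^{k}\big)-\delta t\gamma_1\calE_1^{k+1}.
\]
The resulting coefficient of $\calE_1^{k+1}$ is $\delta t(\gamma_1-\epsilon/(2\rho_{\mat L}))\cdot 2$ or $\delta t(\gamma_1-\epsilon/\rho_{\mat L})$, depending on the Young split chosen in Step 3; it is nonnegative by \eqref{con1}. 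The upper bound $\gamma_1\le C_1/\delta t$ only guarantees that $\delta t\gamma_1\|\R{\alpha_1}\q_{\mat R}\|^2_{\Sigma}$ stays bounded as the mesh is refined.

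Dropping the nonnegative terms and summing from $0$ to $k$ telescopes the $\calF_1$ contributions. This gives \eqref{eq101}, with the initial term $\delta t\gamma_1\|\R{\alpha_1}\q_{\mat R}\|^2_{\Sigma}$ included in $\mathcal{S}^{k+1}$.
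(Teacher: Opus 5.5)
Your proposal is correct and matches how the corollary follows from the proof of Theorem~\ref{thm:BE} in Appendix~\ref{apdB}: with $\gamma_2^{\mat L}=\gamma_2^{\mat R}=0$, the half of $2\delta t\,\epsilon\|\D{}\w^{k+1}\|^2_{\mat L}$ that the paper keeps to absorb the $\hat\gamma_2$ terms can instead be spent in the Young split, which leaves $\frac{\delta t\,\epsilon}{2\rho_{\mat L}}\calE_1^{k+1}$ and hence the threshold $\epsilon/(2\rho_{\mat L})$. Two minor points: the net coefficient of $\calE_1^{k+1}$ is $\delta t(\gamma_1-\epsilon/(2\rho_{\mat L}))$, not twice that, although the sign condition is the same; and of your two remedies for the $\mathsf C$-weighted trace step, only the nodewise Cauchy--Schwarz in the $\mathsf C$ inner product works with the lemma's $\rho_{\mat L}$ as stated, because $\mathsf C_{1,1}=\hat J_{\Sigma}^2/J$ gives exactly $\rho_{\mat L}\|\sum_{a}\R{\beta_1}\Cmat{1,a}{\mat L}\D{\xi_a}\w^{k+1}\|^2_{\bar\Sigma}\le\|\D{}\w^{k+1}\|^2_{\mat L}$ (a step the paper only asserts), whereas bounding $\mathsf C$ above would change the constant in \eqref{con1}.
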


By choosing matching Dirichlet conditions at the interface for both subdomains while imposing the Neumann interface condition only on the right subdomain and not on the left, we adopt a Dirichlet--Robin coupling approach. Although this coupling can lead to a formally conditionally stable partitioned scheme, it may propagate errors at the interface due to the use of inaccurate interface data. In particular, when only a fixed number of coupling iterations is performed per time step (weak coupling), the resulting coupling error can limit the achievable temporal order. The primary issue arises because each subdomain solves its respective equation using interface data from the other subdomain that are not fully consistent, which can result in a loss of accuracy. This inaccuracy propagates across time steps, effectively degrading the designed accuracy of the method \cite{Kazemi2013}.

To mitigate these issues, strategies such as applying a strongly coupled scheme, using higher-order extrapolation in time for the interface conditions, or incorporating additional SAT terms to enforce the Neumann condition at the interface in both subdomains while preserving stability as established in Theorem \ref{thm:BE} can improve order of consistency and restore accuracy while maintaining the efficiency of the partitioned scheme.\\

\textit{\emph{\bf Note 6.}} To provide a constructive approach for selecting parameters based on the given conditions \eqref{a1}-\eqref{a3}, we proceed as follows. Given the number of nodes used for spatial discretization in each direction and the transformation information, we compute $\rho_{\mat L}$  and $\rho_{\mat R}$  using Lemma \ref{lemTrace}. Notice that we assume a conforming mesh at the interface, and $\rho_{\mat L}$ and $\rho_{\mat R}$ can be adjusted by modifying the number of nodes in the direction normal to the interface, taking into account the geometric properties and grid distribution in each subdomain.

Then by applying condition \eqref{a1}, we select $\gamma_1$ such that $\gamma_1 \,\rho_L \geq \epsilon.$
Using condition \eqref{a3}, we choose $\gamma_2^{\mat L}$  and $\gamma_2^{\mat R}$  such that 
\[|\gamma_2^{\mat L}-\gamma_2^{\mat R}|\le\dfrac{\min\{\rho_{\mat L},\, \rho_{\mat R}\}}{\max\{\epsilon,\,\kappa\}}, \]
ensuring $\gamma_2^{\mat L}\ge 0$ and $\gamma_2^{\mat R}\ge 0$.

To derive an upper bound for $\delta t$, we make specific choices for the arbitrary constants \( C_1 \) and \( C_2 \) in conditions \eqref{a1} and \eqref{a2}. In particular, we set  $C_1 := C^*\rho_{\mat L} $  and $C_2 := C^*\min\{\rho_{\mat L},\rho_{\mat R}\}$.  These choices are convenient because they approach zero with mesh refinement. With this selection, it is sufficient to choose the time step such that  
\[
\delta t \leq \min\bigg\{\dfrac{C^*\rho_{\mat L}}{\gamma_1},\, \dfrac{C^*\min\{\rho_{\mat L},\rho_{\mat R}\}}{\kappa^2\min\{\gamma_2^{\mat L},\gamma_2^{\mat R}\}}\bigg\},
\]
for a given positive constant $C^*$. 

Notice that while choosing arbitrarily large \(C_1\) and \(C_2\) allowing a larger time step for stability, it reduces the coherence between the discrete stability estimates \eqref{eq100}  and the continuous stability estimate. The discrete stability estimate is expected to be an equivalent form of the continuous energy estimate in the limit of mesh and time refinement. In other words, the last three terms on the left hand side of \eqref{eq100} vanish.   This was also reported in~\cite{Fernandez2009} in the context of coupling FSI problem using a penalty method at the interface.

\subsection{Using extrapolation  in time of order two at interface as initial guess}\label{SubSec:BEEXT}
In \cite{Meng2017}, it is demonstrated that achieving a second-order accuracy with the CHAMP interface conditions and a second-order time discretization method requires third-order extrapolation in time  at the interface. In this section, we show that by employing backward Euler method for time discretization and second-order extrapolation in time along with the specified given SAT terms at the interface, conditional stability can still be achieved.  In Figure \eqref{diag:BE}, the sequencing of the coupling procedure with BE-EXT2 is graphically presented at the beginning of the loop at each time step. To solve the advection-diffusion problem, the solution at the interface is updated using data from the two previous time levels from the diffusion problem. Notice that in the next loop at current time level, the current calculated solution from the advection-diffusion problem will be used at the interface to solve the diffusion problem. 
\begin{figure}[ht]
\centering
\begin{tikzpicture}
		\node at (2, -0.5) {$t_{n-1}$};
		\node at (6, -0.5) {$t_{n}$};
		\node at (10, -0.5) {$t_{n+1}$};
		
		\node[black] at (-0.5, 0) {Time};
		\node[black] at (-0.5, 1) {Advection-diffusion};
		\node[black] at (-0.5, 2) {Diffusion};
		
		\draw[dashed, thick, gray] (0.5, 0) -- (11, 0); 
		\draw[dashed, thick, gray, ->] (11, 0) -- (11.5, 0);  	
		\foreach \x in { 2, 6, 10 } {
			\draw[black, thick] (\x, 0.1) -- (\x, -0.1);  
		}		
		\foreach \x in { 2,  6, 10} {
			\fill[black] (\x, 1) circle (2pt);      
			\fill[black] (\x, 2) circle (2pt);       
		}		
		\draw[dashed, blue, ->,>=latex] (2, 2) -- (5.9, 1);
		\draw[dashed, blue, ->,>=latex] (6, 2) -- (6, 1.1);
		\draw[thick, blue, ->, >=latex] (6, 1) -- (10, 1);
		\node[draw, blue, circle, fill=white, minimum size=12pt, inner sep=0pt] at (8, 0.7) {1};
		\draw[dashed, red, ->, >=latex] (10, 1) -- (6.05, 1.9);
		\draw[thick, red, ->, >=latex ](6, 2) -- (10, 2);
		\node[draw, red, circle, fill=white, minimum size=12pt, inner sep=0pt] at (8, 2.3) {2};
\end{tikzpicture}
\caption{Sequencing of coupling scheme for BE-EXT2 at the beginning of the loop.}
\label{diag:BE}
\end{figure}

\begin{thm}[Stability of the partitioned BE-EXT2 scheme in 3D curvilinear]\label{thm:BE-EXT2}
Given the partitioned scheme \eqref{eq12a} and \eqref{eq12b}. Assume that the initial guess for interface SAT terms \eqref{eqS1} at each step uses extrapolation in time of order two,  given by the formula
\begin{align}
\begin{aligned}
\v^*=2\v^{k}-\v^{k-1},
\end{aligned}
\end{align}
in \eqref{eqS2} and  $\w^*=\w^{k+1}$ in \eqref{eqS1} for $k\ge 1$ . For $k=0$, let \(\v^*=\v^0\) and $\w^*=\w^{0}$. Then, under the conditions $\rho_{\mat R}<1$ and
\begin{subequations}
\begin{align}
\gamma_1 \rho_{\mat L}(1-\rho_{\mat R})&\ge \epsilon , \label{b1} \\
\delta t\gamma_1(1+\dfrac{4}{\rho_{\mat R}^2})&\le 1 \label{b2},
\\\bar\gamma_2&\le\dfrac{2\rho_{\mat R}}{5\kappa},\label{b3}
\\\hat\gamma_2&\le\dfrac{\min\{\rho_{\mat L},\, \rho_{\mat R}\}}{\max\{\epsilon,\,\kappa\}} \label{b4},
\end{align}
\end{subequations}
the following energy estimate holds
\begin{align}\label{eqBE-EXT2}
\begin{aligned}
&\|\w^{k+1}\|^2_{\mat L}+ (1-\delta t\gamma_1)\|\v^{k+1}\|^2_{\mat R}
\le \mathcal{H}^{k+1},
\end{aligned}
\end{align}
for $1\le k\le N_t-1$, where
\[\mathcal{H}^{k+1}=\mathcal{S}^{k+1}+(k+1)\mathcal{S}^1+\sum_{j=1}^{k}(k+1-j)\mathcal{S}^j,\] and $\mathcal{S}^{j}=\sum_{i=0}^j\calG^{i}$.
\end{thm}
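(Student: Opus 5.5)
We would follow the template of the proof of Theorem~\ref{thm:BE} (Appendix~\ref{apdB}) up to the point where the lagged interface data first appear. Multiply \eqref{eq12a} by ${\w^{k+1}}\Tr\Pnorm{}$ and \eqref{eq12b} by ${\v^{k+1}}\Tr\Pnorm{}$. The SBP property, the skew-symmetric splitting with the discrete metric identities, and the assumption $\a\cdot\n_{\mat L}=0$ then reduce all volume and physical-boundary terms to $\calG^{k+1}$ and the dissipation terms $2\delta t\,\epsilon\|\cdot\|^2$, $2\delta t\,\kappa\|\cdot\|^2$. The one-sided interface flux $2\delta t\,\epsilon(\cdot)(\R{\beta_1}\w^{k+1}-\R{\alpha_1}\v^{k+1})$ is absorbed by Young's inequality and Lemma~\ref{lemTrace} (with $\rho_{\mat L}$), exactly as in \eqref{d11}. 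The mismatch between $\gamma_2^{\mat L}$ and $\gamma_2^{\mat R}$ produces a term of size $\hat\gamma_2$ times interface fluxes, which we again control by Lemma~\ref{lemTrace}; this is where \eqref{b4} enters. The monolithic part of the $\gamma_1,\gamma_2$ SATs gives $2\delta t\gamma_1\calE_1^{k+1}$ and $2\delta t\gamma_2\calE_2^{k+1}$, as in \eqref{d12}--\eqref{d13}.

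The new ingredient is the lag. With $\v^*=2\v^k-\v^{k-1}$, the remainder is
\[
2\delta t\gamma_1(\R{\beta_1}\w^{k+1},\,\R{\alpha_1}(\v^{k+1}-2\v^k+\v^{k-1}))_\Sigma ,
\]
a second difference, together with its flux analogue for $\gamma_2$. Write $\R{\beta_1}\w^{k+1}=(\R{\beta_1}\w^{k+1}-\R{\alpha_1}\v^{k+1})+\R{\alpha_1}\v^{k+1}$. The first piece we bound by Young's inequality, $\tfrac12\calE_1^{k+1}$ plus $\tfrac12$ of the second difference squared; this halves the $\calE_1$ coefficient and, combined with the $\epsilon/\rho_{\mat L}$ loss, leads to \eqref{b1}. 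The second piece does not telescope nicely, unlike \eqref{d15}. Instead we bound it crudely: $|\R{\alpha_1}(\v^{k+1}-2\v^k+\v^{k-1})|^2_\Sigma\le 4\rho_{\mat R}^{-1}$ times sums of $\|\v^{j}\|_{\mat R}^2$ via Lemma~\ref{lemTrace}. We then use Young's inequality with weight $\rho_{\mat R}$, which produces the factor $1+4/\rho_{\mat R}^2$ in \eqref{b2} and the extra $\rho_{\mat R}$ in $(1-\rho_{\mat R})$ in \eqref{b1}. Under \eqref{b2}, $\delta t\gamma_1\|\v^{k+1}\|^2_{\mat R}$ remains on the left, giving the factor $(1-\delta t\gamma_1)$, while multiples of $\|\v^k\|^2_{\mat R}$ and $\|\v^{k-1}\|^2_{\mat R}$ go to the right.

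For the flux terms we use Lemma~\ref{lemTrace} on $\D{\xi_a}\v$ and absorb into $2\delta t\kappa\|\D{}\v^{k+1}\|^2$. The requirement that the coefficient $\tfrac52\kappa\bar\gamma_2/\rho_{\mat R}$ not exceed this dissipation gives \eqref{b3}; the $k$ and $k-1$ derivative contributions are similarly dominated by the dissipation retained from earlier steps.

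This gives a one-step inequality of the form
\[
X^{k+1}\le X^k+Y^{k-1}+\calG^{k+1},
\qquad X^j=\|\w^j\|^2_{\mat L}+(1-\delta t\gamma_1)\|\v^j\|^2_{\mat R},
\]
with $Y^{k-1}$ bounded by earlier energies. The $k=0$ step is the BE-EXT1 step, so Theorem~\ref{thm:BE} gives $X^1\le\mathcal S^1$. Inducting, each earlier energy is bounded by $\mathcal S^j$ plus its own history, and summing the recursion yields the weights $(k+1-j)$ and the $(k+1)\mathcal S^1$ term in $\mathcal H^{k+1}$.

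The main obstacle is the second-difference cross term. It destroys the telescoping structure of \eqref{d15}, so the constants in \eqref{b1}--\eqref{b3} must be tuned so that every lagged term is either absorbed on the left or pushed onto earlier energies that the induction controls. The bound is admittedly only polynomially growing in $k$, which is exactly what the form of $\mathcal H^{k+1}$ indicates. We would first attempt the Young split with parameter $\rho_{\mat R}$ and check afterwards that the resulting constants match \eqref{b1}--\eqref{b2}.
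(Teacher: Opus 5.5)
\textbf{Approach compared with the paper.} Your template matches Appendix~\ref{apdC}: the analogue of \eqref{eq24a}, the $\hat\gamma_2$ mismatch handled via Lemma~\ref{lemTrace} and \eqref{b4}, the first step taken from Theorem~\ref{thm:BE}, and then a recursion. The lagged term is treated differently.

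The paper writes $\v^{k+1}-2\v^k+\v^{k-1}=(\v^{k+1}-\v^k)+(\v^{k-1}-\v^k)$ and handles the first part exactly as in \eqref{d15}. For the second part it splits $\R{\beta_1}\w^{k+1}=\R{\alpha_1}\v^{k-1}+(\R{\beta_1}\w^{k+1}-\R{\alpha_1}\v^{k-1})$, uses Young with weight $\rho_{\mat R}/4$, and routes $\R{\beta_1}\w^{k+1}-\R{\alpha_1}\v^{k-1}$ through $\R{\alpha_1}\v^{k+1}$. This is what produces three things:
\begin{itemize}
\item the term $\rho_{\mat R}\calE_1^{k+1}$, which gives the $1-\rho_{\mat R}$ in \eqref{b1};
\item an $O(\delta t\gamma_1)$ coefficient on $\|\v^{k+1}\|^2_{\mat R}$, which gives the factor $1-\delta t\gamma_1$;
\item the coefficient $\rho_{\mat R}^{-1}(2\rho_{\mat R}^{-1}-1)$, which gives the $4/\rho_{\mat R}^2$ in \eqref{b2}.
\end{itemize}

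Your bookkeeping does not follow from your own split. After your unit-weight Young there is no $\calE_1$ left for a $\rho_{\mat R}$-weighted Young to act on. Also, a crude trace bound of $\delta t\gamma_1\|d\|^2_\Sigma$, with $d=\R{\alpha_1}(\v^{k+1}-2\v^k+\v^{k-1})$, puts $O(\delta t\gamma_1/\rho_{\mat R})\|\v^{k+1}\|^2_{\mat R}$ on the right, not $\delta t\gamma_1\|\v^{k+1}\|^2_{\mat R}$.

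Your split is actually sharper than you think. With $a=\R{\alpha_1}\v^{k+1}$, $b=\R{\alpha_1}\v^{k}$, $c=\R{\alpha_1}\v^{k-1}$, so that $d=a-2b+c$,
\[
-2(a,d)_\Sigma=\big(\|b\|^2_\Sigma-\|a\|^2_\Sigma\big)+\big(\|b\|^2_\Sigma-\|c\|^2_\Sigma\big)-\|d\|^2_\Sigma+2\|b-c\|^2_\Sigma .
\]
So the second piece does telescope, and its $-\|d\|^2_\Sigma$ cancels the one produced by your first Young. You recover exactly the paper's two telescoping brackets plus a single residual $2\delta t\gamma_1\|\R{\alpha_1}(\v^k-\v^{k-1})\|^2_\Sigma$, and you need only $\gamma_1\rho_{\mat L}\ge\epsilon$.

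\textbf{The genuine gap is the final step.} Both you and the paper bound the lagged residuals by the energies $\|\v^k\|^2_{\mat R}$, $\|\v^{k-1}\|^2_{\mat R}$ with a fixed positive coefficient. A recursion $X^{k+1}\le X^k+c\,(X^k+X^{k-1})+\calG^{k+1}$ with $c>0$ then compounds geometrically. Weights $(k+1-j)$ would require bounding the inner energies by $\mathcal S^i$ alone, which is not a valid induction hypothesis.

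Following Appendix~\ref{apdC} does not rescue this. After \eqref{b2}--\eqref{b3} the paper ends with $Z^{k+1}\le\mathcal S^{k+1}+\sum_{j=1}^kZ^j$, where $Z^j=\|\w^j\|^2_{\mat L}+(1-\delta t\gamma_1)\|\v^j\|^2_{\mat R}+(\text{dissipation})$. The sequence $Z^{k+1}=\mathcal S^{k+1}+\sum_{j=1}^k2^{k-j}\mathcal S^j$ satisfies this with equality. If $\calG^j=0$ for $j\ge 2$, that sequence equals $2^k\mathcal S^1$, whereas $\mathcal H^{k+1}$ is only quadratic in $k$.

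\textbf{What is missing} is a mechanism that absorbs the lagged $\gamma_1$ term instead of recycling it as energy. The residual is a difference:
\[
2\delta t\gamma_1\|\R{\alpha_1}(\v^k-\v^{k-1})\|^2_\Sigma\le 2\delta t\gamma_1\rho_{\mat R}^{-1}\|\v^k-\v^{k-1}\|^2_{\mat R}.
\]
The backward Euler identity $2\,{\v^{k+1}}\Tr\Pnorm{}\MJack{\mat R}(\v^{k+1}-\v^k)=\|\v^{k+1}\|^2_{\mat R}-\|\v^k\|^2_{\mat R}+\|\v^{k+1}-\v^k\|^2_{\mat R}$ supplies exactly that increment on the left, and both you and the paper discard it. After summation it absorbs the residuals when $2\delta t\gamma_1\le\rho_{\mat R}$, which \eqref{b2} implies. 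The flux analogues can be charged to the summed dissipation under a \eqref{b3}-type restriction on $\gamma_2$. Without something of this kind, your argument supports at best a discrete Gronwall bound, not $\mathcal H^{k+1}$.
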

\begin{proof}
	A proof follows that of Theorem \eqref{thm:BE} and is given in Appendix \ref{apdC}.
\end{proof}

Note that, generally, for higher-order partitioned scheme combined with extrapolation in time of order $m$ (EXT$m$) at interface,  unconditional stability cannot be assured. For results on the stability properties of the advection-diffusion equation solved on overlapping grids using the backward finite difference formula of order $l$ (BDF$l$) and EXT$m$, see \cite{Peet2012}. Moreover, for weakly coupled schemes with a fixed number of sub-iterations per time step, the coupling error may limit the achievable temporal order, so higher-order extrapolation alone does not, in general, guarantee high-order accuracy without additional coupling iterations.
As noted by Peet \cite{Peet2012}, it is generally more practical to use BDF$m$/EXT$m$ schemes. This is because increasing the accuracy of the interface conditions beyond the accuracy of the time integration leads to additional computational costs without significantly improving the overall scheme's accuracy. However, as suggested in \cite{Meng2017}, using one order higher extrapolation  in time at interface is a viable choice to preserve the order of accuracy of the time marching scheme which aligns with the findings in our numerical experiments.

\

\subsection{Applying second-order time discretization}\label{SubSec:BEFE}
The natural question that arises is how to adapt to a second-order scheme for the time derivative. For coupled complex systems  transitioning from the first-order backward Euler method to a second-order energy-stable method to improve numerical accuracy can be challenging. The most commonly used second-order accurate time discretization method in multi-physics applications is BFD2. However, it is not energy stable when applied with variable step sizes \cite{Dahlquist1983}. Recently an alternative formulation of the midpoint method and a theta-like generalization of it has been introduced and they have been shown to be unconditionally stable \cite{Burkardt2020} and has been use in FSI application \cite{Bukac2022}. Recall that the classical implicit  midpoint rule is given by
\[\dfrac{\u^{k+1}-\u^k}{\delta t}=\calA(\u^{k+1/2},t_{k+1/2}),\]
where $\calA(\u,t)$ is the matrix results from the spatial discretization of the underlying PDE at  time $t$ (semi-discretized operator) and $\u^{k+1/2}=(\u^{k+1}+\u^{k})/2$.

We implement the midpoint rule by solving a backward finite-difference Euler step at the half time step $t_{k+1/2}$, followed by a forward finite-difference Euler step to $t_{k+1}$,
\begin{align}\label{BEFE}
	\begin{aligned}
		\dfrac{\u^{k+1/2}-\u^{k}}{\delta t/2}&=\calA(\u^{k+1/2},t_{k+1/2}),
		\\\dfrac{\u^{k+1}-\u^{k+1/2}}{\delta t/2}&=\calA(\u^{k+1/2},t_{k+1/2}).
	\end{aligned}
\end{align}
Note that \eqref{BEFE} is equivalent to  the one-stage Gauss--Legendre implicit Runge--Kutta method of order two. It has been shown that this method  is a second order accurate, unconditionally A-stable and energy stable method \cite{Burkardt2020}.

\begin{prop}\label{propStab}
	The midpoint method \eqref{BEFE} is unconditionally-stable, and the following equality holds:
	\[\|\u^{k+1}\|^2_{\Pnorm{}}-\|\u^k\|^2_{\Pnorm{}}=2\delta t \langle \u^{k+1/2},\calA(\u^{k+1/2},t_{k+1/2})\rangle_{\Pnorm{}}.\]
\end{prop}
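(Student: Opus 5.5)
The identity follows by adding the two half-steps of \eqref{BEFE} and then testing the result against the midpoint value in the $\Pnorm{}$-inner product, exactly as in the backward Euler energy computation of Theorem~\ref{thm:1D_mono}.

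First, add the two equations in \eqref{BEFE} after multiplying each by $\delta t/2$. Both right-hand sides are the same vector $\calA(\u^{k+1/2},t_{k+1/2})$, so
\[
\u^{k+1}-\u^{k}=\delta t\,\calA(\u^{k+1/2},t_{k+1/2}).
\]
Subtracting the two equations instead gives $\u^{k+1}-\u^{k+1/2}=\u^{k+1/2}-\u^{k}$, that is, $\u^{k+1/2}=(\u^{k+1}+\u^{k})/2$. This confirms that \eqref{BEFE} is exactly the classical implicit midpoint rule.

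Next, take the $\Pnorm{}$-inner product of the first identity with $\u^{k+1}+\u^{k}=2\u^{k+1/2}$. Since $\Pnorm{}$ is symmetric, the cross terms cancel:
\[
\langle \u^{k+1}+\u^{k},\,\u^{k+1}-\u^{k}\rangle_{\Pnorm{}}=\|\u^{k+1}\|^2_{\Pnorm{}}-\|\u^{k}\|^2_{\Pnorm{}}.
\]
The right-hand side becomes $2\delta t\,\langle \u^{k+1/2},\calA(\u^{k+1/2},t_{k+1/2})\rangle_{\Pnorm{}}$, which is the claimed equality. Unlike the backward Euler computation, no dissipative term $\|\u^{k+1}-\u^{k}\|^2_{\Pnorm{}}$ is discarded; the identity is exact.

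For unconditional stability, we argue as follows. Whenever the semi-discrete SBP--SAT operator is energy stable, i.e.\ $\langle \u,\calA(\u,t)\rangle_{\Pnorm{}}\le \mathcal{B}(t)$, the identity gives $\|\u^{k+1}\|^2_{\Pnorm{}}\le\|\u^{k}\|^2_{\Pnorm{}}+2\delta t\,\mathcal{B}(t_{k+1/2})$ with no restriction on $\delta t$. Here $\mathcal{B}(t)\le 0$ in the homogeneous case, and in general $\mathcal{B}$ is a data term, obtained from the interior, boundary-SAT and monolithic interface-SAT energy analysis of Theorems~\ref{thm:1D_mono} and~\ref{thm:BE}, with $\Pnorm{}$ replaced by the appropriate $\MJack{}\Pnorm{}$ weight. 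Summing in $k$ then yields a bound of the same type as in the earlier theorems.

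The only delicate point is interpretive: what ``unconditionally stable'' means for a general $\calA$. I would state explicitly that it is relative to the semi-discrete energy estimate. I would also note that the time dependence of $\calA$ enters only through the data, so it does not affect the algebra.
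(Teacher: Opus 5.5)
Your proof is correct and takes essentially the same route as the paper. The paper simply defers to Proposition~2.1 of \cite{Burkardt2020}, whose argument is the same computation: add the two half-steps to recover the implicit midpoint rule, then test against $\u^{k+1/2}=(\u^{k+1}+\u^{k})/2$ in the $\Pnorm{}$ inner product. Your explicit reading of ``unconditionally stable'' as relative to semi-discrete energy stability, $\langle \u,\calA(\u,t)\rangle_{\Pnorm{}}\le\mathcal{B}(t)$, is a useful clarification that the paper leaves implicit.
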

\begin{proof}
	See proof of Proposition 2.1. in ~\cite{Burkardt2020}.
\end{proof}
Equations \eqref{BEFE} can be  written as
\begin{align}\label{BEFE01}
	\begin{aligned}
		\dfrac{\u^{k+1/2}-\u^{k}}{\delta t/2}&=\calA(\u^{k+1/2},t_{k+1/2}),\\
		\u^{k+1}&=2\u^{k+1/2}-\u^{k}.
	\end{aligned}
\end{align}
We refer to this method as backward Euler, forward Euler (BEFE). In the following, we modify Algorithm~\ref{Alg:part} by applying the BEFE scheme for time discretization.

We define the SAT terms at interface the time level $t_{k+1/2}$.
Given $\v^*$, define the new SAT term at interface for left PDE in following way:
\begin{align*}
	\begin{aligned}
		\hat{\mat S}^{{\mat L},1}_{\Sigma}=\Pnorm{}^{-1}\R{\beta_1}\Tr\MSJack{\Sigma}{}\Pnorm{\perp\xi_1}\big(\R{\beta_1}\w^{k+1/2}-\R{\alpha_1}\v^*\big),
	\end{aligned}
\end{align*}
and similarly we define the other interface SAT terms and boundary SAT terms. Next is to choose initial guess for $\v^*$ in SATs at interface. For $k=0$,  let $\v^*=\v^{0}$ and for $k\ge 1$, set $\v^*=2\v^{k}-\v^{k-1/2}$.  Then for given SAT parameters $\gamma_1,\gamma_2>0$, we solve iteratively

\begin{itemize}
\item solve the left sub-problem: given $\v^*$, find $\w^{k+1/2}$ such that
\begin{align}\label{eq20a}
\begin{aligned}
\MJack{\mat L} \partial_{\delta t} \w^{k+1/2}&+\dfrac12\sum_{l,m=1}^3 a_m\bigg\{\D{\xi_l}\MJdxildxmk{l}{m}{\mat L}+\MJdxildxmk{l}{m}{\mat L}\D{\xi_l}\bigg\}\w^{k+1/2}
+\gamma_1\hat {\mat S}^{{\mat L},1}_{\Sigma}+\gamma_2^{\mat L} \hat{\mat S}^{{\mat L},2}_{\Sigma}
\\&=\epsilon\sum_{l,a=1}^3\D{\xi_l}\Cmat{l,a}{\mat L}\D{\xi_a}\w^{k+1/2}+\sum_{l=1}^3\hat {\mat S}_{\alpha_l}^{\mat L}+\sum_{l=2}^3 \hat{\mat S}_{\beta_l}^{\mat L}.
\end{aligned}
\end{align}
\item update the interface data $\w^*$.
\item solve the right sub-problem: given $\w^*$, find $\v^{k+1/2}$ such that
\begin{align}\label{eq20b}
\begin{aligned}
\MJack{\mat R} \partial_{\delta t} \v^{k+1/2}+\gamma_1\hat{\mat S}^{{\mat R},1}_{\Sigma}+\gamma_2^{\mat R}\hat {\mat S}^{{\mat R},2}_{\Sigma}
+\hat{\mat S}^{{\mat R},3}_{\Sigma}
=\kappa\sum_{l,a=1}^3\D{\xi_l}\Cmat{l,a}{\mat R}\D{\xi_a}\v^{k+1/2}+\sum_{l=1}^3 \hat{\mat S}_{\beta_l}^{\mat R}+\sum_{l=2}^3 \hat{\mat S}_{\alpha_l}^{\mat R}.
\end{aligned}
\end{align}
\item update the interface data $\v^*$.
\item update the solution, set
\begin{align}\label{eq20c}
\begin{aligned}
\w^{k+1}&=2\w^{k+1/2}-\w^{k},
\\\v^{k+1}&=2\v^{k+1/2}-\v^{k}.
\end{aligned}
\end{align}
\end{itemize}
Establishing a stability estimate for this partitioned scheme  is a result of applying Proposition \eqref{propStab} and follows exactly as Theorem \eqref{thm:BE-EXT2}.
\begin{thm}[Stability of the partitioned BEFE-EXT2 scheme in 3D curvilinear]\label{thm:BEFE}Given the partitioned scheme \eqref{eq20a}-\eqref{eq20c}. Then, under the conditions of theorem \eqref{thm:BE-EXT2} the following energy estimate holds
	\begin{align*}
		\begin{aligned}
			\|\w^{k+1}\|^2_{\mat L}+ (1-\delta t\gamma_1)\|\v^{k+1}\|^2_{\mat R}
			\le \mathcal{H}^{k+1}+2\mathcal{H}^{k+1/2}
		\end{aligned}
	\end{align*}
for $1\le k\le N_t-1$.
\end{thm}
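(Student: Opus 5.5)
The plan is to reduce the BEFE-EXT2 scheme to the BE-EXT2 analysis of Theorem~\ref{thm:BE-EXT2}, with the half step $\w^{k+1/2},\v^{k+1/2}$ playing the role of the backward Euler unknown. The starting point is Proposition~\ref{propStab} in its first-stage form. The half-step equations \eqref{eq20a}--\eqref{eq20b} are exactly backward Euler steps of size $\delta t/2$ from $\w^k,\v^k$ to $\w^{k+1/2},\v^{k+1/2}$, with all SATs evaluated at $t_{k+1/2}$. Hence, testing \eqref{eq20a} with ${\w^{k+1/2}}\Tr$ and \eqref{eq20b} with ${\v^{k+1/2}}\Tr$, I will use the exact identity
\[
2{\u^{k+1/2}}\Tr\MJack{}(\u^{k+1/2}-\u^k)=\|\u^{k+1/2}\|^2-\|\u^k\|^2+\|\u^{k+1/2}-\u^k\|^2 .
\]
This identity is used instead of the inequality from the BE proof. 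Together with \eqref{eq20c}, it gives $\|\u^{k+1}\|^2-\|\u^k\|^2=4{\u^{k+1/2}}\Tr\MJack{}(\u^{k+1/2}-\u^k)=2\delta t\langle \u^{k+1/2},\calA(\u^{k+1/2})\rangle$. This is precisely Proposition~\ref{propStab} applied separately to the left and right subproblems in the $\|\cdot\|_{\mat L}$ and $\|\cdot\|_{\mat R}$ norms.

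Next I will evaluate the right-hand side $2\delta t\langle \u^{k+1/2},\calA\rangle$ for each subdomain exactly as in Appendix~\ref{apdB}/\ref{apdC}. The steps are as follows: use the SBP property to convert the volume terms into $-\epsilon\|\cdot\|^2$, $-\kappa\|\cdot\|^2$ dissipation plus boundary terms; cancel the advective interface flux using $\a\cdot\n_{\mat L}=0$; bound the physical boundary SATs by the data terms $\calG^{k+1/2}$; and control the leftover interface flux by Lemma~\ref{lemTrace} together with $\gamma_1$ through \eqref{b1}. The interface SAT pairs $\hat{\mat S}^{\mat L,1}+\hat{\mat S}^{\mat R,1}$ and $\hat{\mat S}^{\mat L,2}+\hat{\mat S}^{\mat R,2}$ then produce the squared jumps $\calE_1^{k+1/2},\calE_2^{k+1/2}$ and the lagging remainders. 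The lag now reads $\R{\alpha_1}(\v^{k+1/2}-\v^*)$ with $\v^*=2\v^k-\v^{k-1/2}$, which is a second difference in the half-step sequence. Formally this is the EXT2 remainder of Theorem~\ref{thm:BE-EXT2}, with the time index shifted to $k-1/2,k,k+1/2$. I will treat it with the same telescoping and Young splitting, using \eqref{b2}--\eqref{b4} to absorb the $\calF_1,\calF_2$ terms.

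The outcome should be a one-step inequality for the half level that mirrors the BE-EXT2 one. Summing as in Appendix~\ref{apdC} gives the bound $\|\w^{k+1/2}\|_{\mat L}^2+(1-\delta t\gamma_1)\|\v^{k+1/2}\|_{\mat R}^2\le\mathcal H^{k+1/2}$, where $\mathcal H^{k+1/2}$ is built from the data $\calG$ at the half levels. Finally I will combine this with the full-step relation. Using $\u^{k+1}=2\u^{k+1/2}-\u^k$ and the identity above, the full-step energy equals $\|\u^k\|^2$ plus twice the half-step increment. Adding the two estimates, one for the $k$-level chain and one for the half-level chain, gives $\|\w^{k+1}\|_{\mat L}^2+(1-\delta t\gamma_1)\|\v^{k+1}\|_{\mat R}^2\le\mathcal H^{k+1}+2\mathcal H^{k+1/2}$. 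Here the factor $2$ comes directly from Proposition~\ref{propStab}.

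The main obstacle is the extrapolation lag. Because $\v^*=2\v^k-\v^{k-1/2}$ mixes integer and half levels, the telescoping of $\calF_1,\calF_2$ does not close within a single chain. I will first try to bound $\|\R{\alpha_1}(\v^{k}-\v^{k-1/2})\|_\Sigma$ by the trace inequality with the increment $\|\v^{k}-\v^{k-1/2}\|_{\mat R}=\|\v^{k-1/2}-\v^{k-1}\|_{\mat R}$, using \eqref{eq20c}. That increment is exactly the term retained by the midpoint identity, so it can be absorbed under \eqref{b2}. If this does not close, I will fall back on the cruder bound $|a-b|^2\le 2a^2+2b^2$ as in Theorem~\ref{thm:BE-EXT2}, which accounts for the growing weights $(k+1-j)$ in $\mathcal H$.
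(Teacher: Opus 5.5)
Your overall route is the one the paper indicates: apply Proposition~\ref{propStab}, then rerun the argument of Theorem~\ref{thm:BE-EXT2}. The gap is in how you assemble the final estimate.

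\textbf{The two-chain assembly.} You treat the integer levels and the half levels as two separate BE-EXT2 chains, summed independently and then added. Consecutive half levels are not linked by a backward Euler step: between $\u^{k+1/2}$ and $\u^{k+3/2}$ lies the forward step \eqref{eq20c}.
\begin{itemize}
\item Summing the half-step inequalities ``as in Appendix~\ref{apdC}'' therefore does not telescope. It does not by itself give $\|\w^{k+1/2}\|^2_{\mat L}+(1-\delta t\gamma_1)\|\v^{k+1/2}\|^2_{\mat R}\le\mathcal{H}^{k+1/2}$.
\item Even granting that bound, it controls the state. Your identity $\|\u^{k+1}\|^2=\|\u^k\|^2+2\Delta^{k+1/2}$ needs control of the increment $\Delta^{k+1/2}=\|\u^{k+1/2}\|^2-\|\u^k\|^2+\|\u^{k+1/2}-\u^k\|^2$.
\item From state bounds alone, $\u^{k+1}=2\u^{k+1/2}-\u^k$ gives only something like $\|\u^{k+1}\|^2\le 8\|\u^{k+1/2}\|^2+2\|\u^k\|^2$.
\item The ``$k$-level chain'' is not an independent estimate. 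It is the recursion you are proving, and iterating it accumulates all earlier half-step contributions.
\end{itemize}

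\textbf{The lag absorption.} The same algebra defeats your primary treatment of the lag. Proposition~\ref{propStab} is an exact, non-dissipative identity, since $\|\u^{k+1}\|^2=2\|\u^{k+1/2}\|^2-\|\u^k\|^2+2\|\u^{k+1/2}-\u^k\|^2$. The backward-Euler increment $\|\u^{k+1/2}-\u^k\|^2$ is entirely consumed by the extrapolation. So no retained $\|\v^{k-1/2}-\v^{k-1}\|^2_{\mat R}$ is left to absorb $\|\R{\alpha_1}(\v^k-\v^{k-1/2})\|^2_\Sigma$.

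\textbf{What closes.} Run a single recursion on the integer levels.
\begin{itemize}
\item Take Proposition~\ref{propStab} as the energy identity in each subdomain. Bound $2\delta t\langle\u^{k+1/2},\calA\rangle$ exactly as in Appendix~\ref{apdB}.
\item Eliminate every half level through $\u^{k+1/2}=\tfrac12(\u^{k+1}+\u^k)$. With $\v^*=2\v^k-\v^{k-1/2}$, the lag becomes $\R{\alpha_1}(\v^{k+1/2}-\v^*)=\tfrac12\R{\alpha_1}(\v^{k+1}-2\v^k+\v^{k-1})$. This is exactly half of the BE-EXT2 lag.
\item Split $\R{\beta_1}\w^{k+1/2}$ into the interface jump plus $\R{\alpha_1}\v^{k+1/2}$. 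Because $\R{\alpha_1}\v^{k+1/2}$ is an average, the $(\v^{k+1}-\v^k)$ part telescopes exactly. It contributes $\tfrac{\delta t\gamma_1}{2}\big(\|\R{\alpha_1}\v^{k}\|^2_\Sigma-\|\R{\alpha_1}\v^{k+1}\|^2_\Sigma\big)$ to the right-hand side. The Neumann SAT pair is analogous.
\item The jump part and the $(\v^{k-1}-\v^k)$ part must be handled as in Appendix~\ref{apdC}: Young's inequality, Lemma~\ref{lemTrace}, condition \eqref{b2}, and a discrete Gronwall summation over integer levels. The data enter through $\calG$ evaluated at $t_{j+1/2}$.
\end{itemize}
That is your fallback, and it is the only mechanism actually available here.
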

\begin{proof}
    The proof follows from the argument of Theorem~\ref{thm:BE-EXT2} with only minor modifications and is omitted for brevity.
\end{proof}

\section{Numerical verification}\label{sec:Numeric}
In this section, we present numerical results to validate the stability and accuracy of the proposed weakly coupled partitioned scheme. We examine the spatial accuracy and demonstrate the order of spatial convergence. 
We analyze the spectral properties of the iteration matrix associated with the scheme. Finally, we investigate the relationship between the SAT parameters and the spatial and temporal discretization step sizes.

We consider a domain consisting of two adjacent squares  $\Omega=\Omega_{\mat L}\cup \Omega_{\mat R}$, where $\Omega_{\mat L}=[-1,0]\times[-1,1]$ and $\Omega_R=[0,1.2]\times[-1,1]$, so the interface is the line $x=0, -1\le y\le1$. The time interval is given as $[0, T]$ and $\delta t=T/N_t$, where $N_t$ is given. 

The curvilinear mesh is generated by first defining an uniform computational grid $\mathbb{G}=(\xi_i, \eta_j)$ for $i,j=1,\ldots n$, for a given $n$, on reference domain $[0,1]^2$ with  spacing $\delta \xi=\delta \eta=1/(n-1)$. For simplicity we assumed that we use the same number of nodes in $\xi$ and $\eta$ directions. We use following transformation to obtain the positions of the curvilinear grid $\mathbb{\hat{G}}=(x_i, y_j)$ for $i,j=1,\ldots n$ on the physical domain
\begin{align}
	\begin{aligned}
		x&=1-\xi-\dfrac{1}{32}\cos \big(\pi(\xi-\dfrac12)\big)\cos\big(3\pi(\eta-\dfrac12)\big),\\\nonumber
		y&=1-\eta-\dfrac{1}{32}\sin\big(4\pi (x-\dfrac12)\big)\cos\big(\pi(\eta-\dfrac12)\big).
	\end{aligned}
\end{align}
\begin{figure}[htp]
	\centering
	\begin{subfigure}{6.25cm}
		\centering\includegraphics[width=5.25cm]{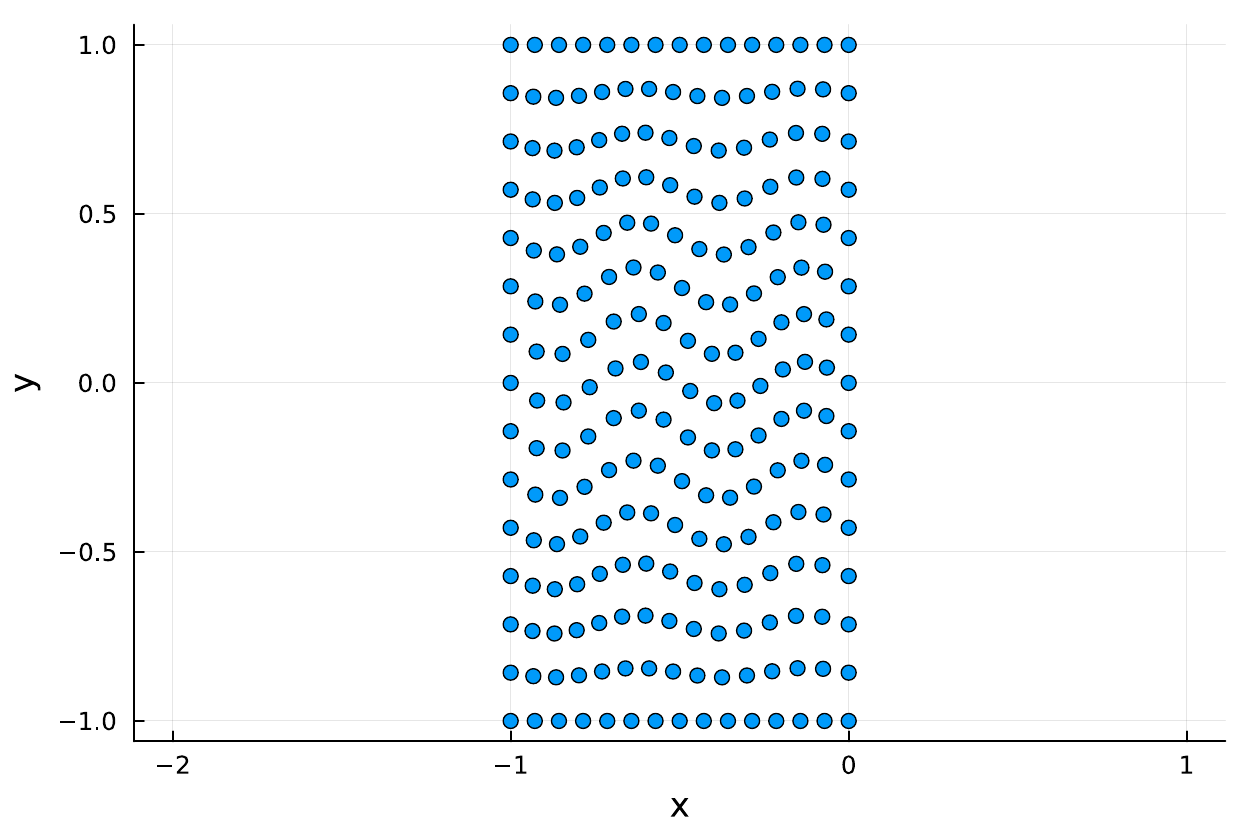}
	\end{subfigure}
	\hspace{1.5cm}
	\begin{subfigure}{6.25cm}
		\centering\includegraphics[width=5.25cm]{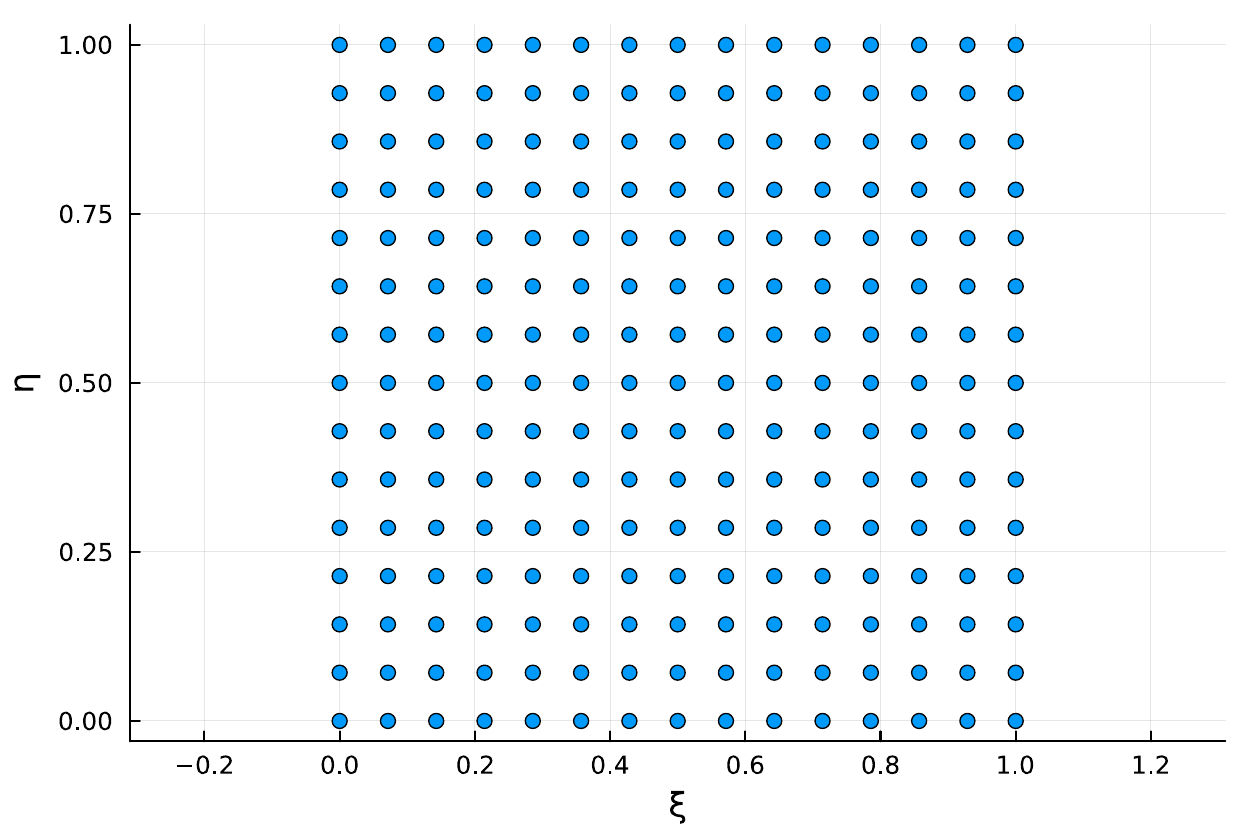}
	\end{subfigure}
	\caption{The distribution of nodes on physical (left) and reference (right) domains } 
	\label{fig:nodes}
\end{figure}

In this work, we focus on SBP operators constructed using finite difference discretization, though the theory extends naturally to finite volume and discontinuous Galerkin methods \cite{DelRey2014}. With a given  number of nodes for spatial discretization in each direction and the transformation information, we compute $\rho_{\mat L}$ and $\rho_{\mat R}$ by using Lemma \ref{lemTrace} for left subdomain. By using conditions \eqref{b1}-\eqref{b4}, we select $\gamma_1, \gamma_2= \gamma_2^{\mat L}= \gamma_2^{\mat R}$ and $\delta t$ accordingly. 

In all of the following experiments the accuracy of the proposed partitioned scheme is evaluated using the method of manufactured solution. We construct solutions for right and left problems such that they satisfy the corresponding PDEs and the interface conditions given by \eqref{eq03c}. We set 
\begin{eqnarray}\nonumber\label{eq46}
	w(x,y,t)=\dfrac{1}{\epsilon}\sin(x^3+x^2y)\,e^{0.1(x+y)t},\qquad v(x,y,t)=\dfrac{1}{\kappa}\sin(x^3+x^2y)\,e^{0.1(x+y)t}.
\end{eqnarray}
With these exact solutions, we calculate the boundary functions $g_L(t)$ and $g_R(t)$, and initial function $q_L(x)$ and $q_R(x)$. We assume $ \a=[a_1,a_2],\, \epsilon$ and $\kappa$  such that the manufactured solutions satisfy the PDEs, the interface conditions hold and  $f_R(0,t)=f_L(0,t)$, where $f_R$ and $f_L$ are the  source terms of right and left PDEs. Also for the wellposedness of PDEs on interface,  $\a\cdot\n_L=0,  (a_1=0)$.

Let  $\w^{N_t+1}$  and $\v^{N_t+1}$ be vectors containing the numerical solutions at the final time $t=T$. Assume $\w_{ex}=w(x_i,y_j, t=T)$ and $\v_{ex}=v(x_i,y_j,t=T)$  are  the exact solutions for left and right problems evaluated at the physical grid nodes at the final time $T$. The error is determined using $\Pnorm{}$ norm at the final time $T$, 
\[Error=\sqrt{\|\w^{N_t+1}-\w_{ex}\|^2_{\mat L}+\|\v^{N_t+1}-\v_{ex}\|^2_{\mat R}}.\]

Table \eqref{tab:SpaceBE-EXT2} presents the results of a spatial convergence study for the partitioned BE-EXT2 scheme in 2D curvilinear coordinates using 1D SBP  operators of order $p=1, 2, 3$.  The ratio in the table is computed by dividing the error on a coarser mesh by the error on the next finer mesh.

Figure \eqref{fig:SpaceOrder} illustrates the convergence rate for the second-order time-marching method, BEFE-EXT2. An estimated rate of the convergence (slope) is determined by fitting a linear polynomial of data.
The results demonstrate that high-order SBP operators provide better accuracy compared to lower-order methods.  By carefully defining the SATs  at the interface, the proposed approach  allows independent solvers for the solid and fluid to interact without introducing instability  while leveraging the advantage of high order accuracy.

\begin{table}[ht!]
\caption{Results of spatial convergence study of iterated partitioned scheme  ($N_{\ell oop}=2$)  in comparison with monolithic scheme for 2D problem with time discretization using BE-EXT2. The results are given at final time $T=1$ with time step $\delta t= 10^{-4}$ for 1D SBP operator of order $p=1,2,3$. }
\centering\scalebox{1}{
\begin{tabular}{c|cll|ll} 
			\hline
			&&Partitioned&&Monolithic&\\
			&  $n$ & error & ratio&error & ratio   \\
			\hline 
			&$5$&  0.175380 &   -  &  0.175380 & -   \\
			$p=1$   &$10$& 0.061504 &1.51  &  0.061504 &1.51  \\
			&$20$& 0.015366 &2.00  &  0.015366 &2.00   \\
			&$40$& 0.003723 &2.04  &  0.003723 &2.04  \\
			\hline 
			&$9$ &  0.070053 &  -  &0.070053&  \\
			$p=2$       &$18$&  0.009911 &2.82 &0.009911&2.82 \\
			&$36$&  0.001193 &3.05 &0.001193&3.05 \\
			&$72$&  0.000143 &3.06 &0.000143&3.06\\
			\hline 
			&$13$ &  0.016970 &-    &0.016970  &- \\
			$p=3$     &$26$ &  0.001230 &3.78 &0.001230  &3.78\\
			&$52$ &  8.0546e-5&3.93 &8.0537e-5 & 3.93\\
			&$104$&  5.7811e-6&3.80 &5.7448e-6&3.80\\
			\hline
\end{tabular}}
\label{tab:SpaceBE-EXT2}
\end{table}

\begin{figure}[ht!]
\centering\includegraphics[width=9.25cm]{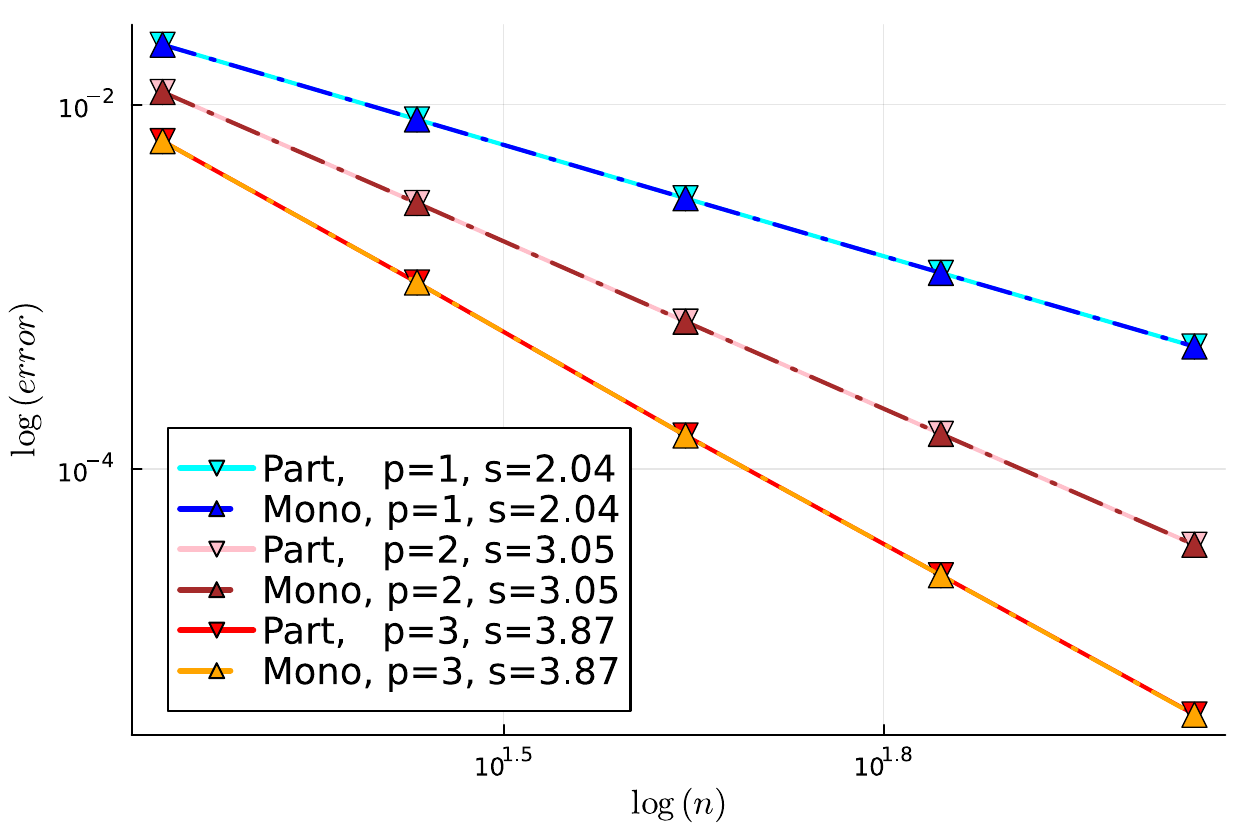}
\caption{Results of spatial convergence study of iterated partitioned scheme  ($N_{\ell oop}=2$) in comparison with monolithic scheme with time discretization BEFE where extrapolation in time of order 2 (EXT2) is used at interface. The results are given at final time $T=1$ with time step $\delta t= 10^{-4}$.} 
	\label{fig:SpaceOrder}
\end{figure}

Strongly coupled partitioned schemes typically require multiple iterations per time step to satisfy interface conditions to a specified tolerance. In this work, we adopt weakly coupled schemes that require significantly fewer iterations. Figure \eqref{fig:EXT} compares the convergence of a monolithic scheme, the iterated BEFE-EXT1, and the non-iterated BEFE-EXT2. The results illustrate that the iterated scheme converges to the monolithic solution with four sub-iterations, while by leveraging extrapolation in time of order two, the partitioned solution convergence to monolithic solution in one sub-iteration. The numerical experiments suggest that sub-iterations and time extrapolation at the interface can improve the accuracy of partitioned schemes. Nevertheless, we emphasize that higher-order time accuracy cannot, in general, be guaranteed with a fixed number of sub-iterations, and achieving the monolithic accuracy may require additional coupling iterations (strong coupling) depending on the time step and problem parameters.

\begin{figure}[h!]
\centering
\centering\includegraphics[width=9.25cm]{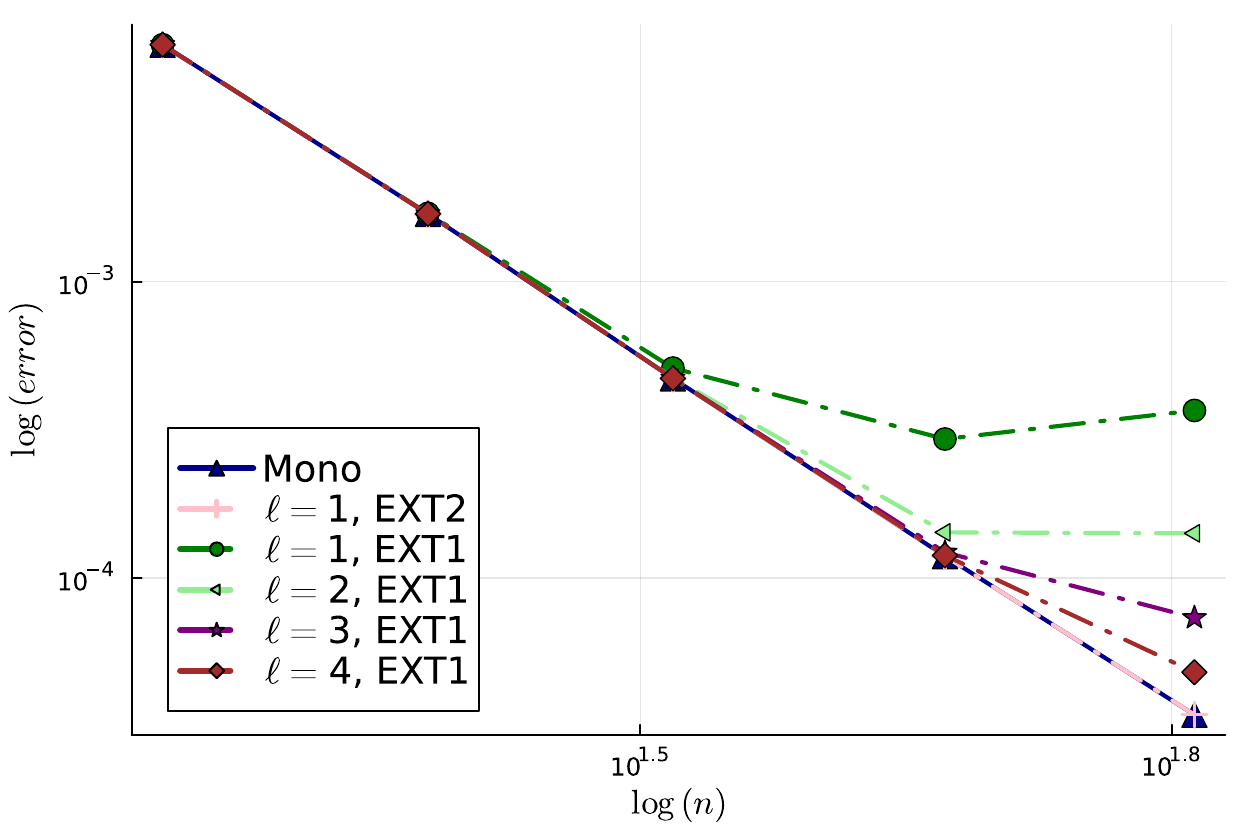}
\caption{Comparison of the convergence of  monolithic scheme, iterated BEFE-EXT1 and non-iterated BEFE-EXT2, $\delta t=10^{-4}$ and $p=3$.} 
\label{fig:EXT}
\end{figure}

In Figure \eqref{fig:DisErr1} we show the error distribution in both subdomains using the iterated partitioned scheme with  BE-EXT2. We notice that the error is generally larger near the interface and the physical boundaries.  Notice that currently we use a second derivative SBP operator of the form  
\[\DoneD{}\DoneD{}=\Pnorm{}^{-1}(-\DoneD{}\Tr \Pnorm{}\DoneD{}+\EoneD{}\DoneD{}).\] 
One way to improve the errors at interface and boundaries is to employ narrow-diagonal second-derivative SBP operators \cite{Mattsson2004b,Mattsson2012}.  

\begin{figure}[h!]
\centering
\centering\includegraphics[width=7.25cm]{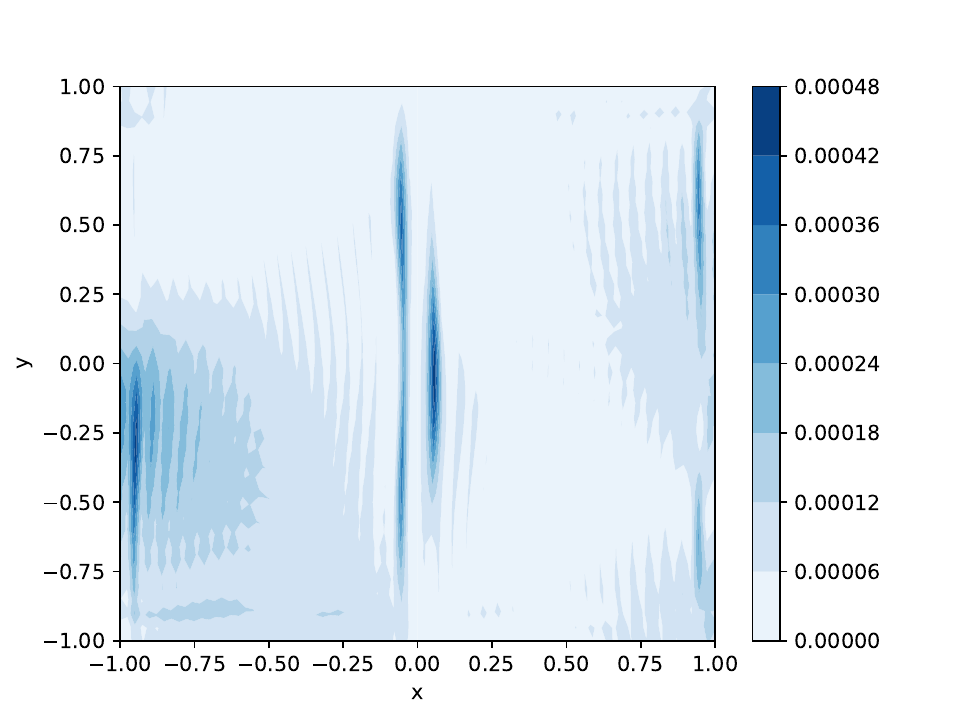}
\caption{Error distribution in using iterated partitioned scheme with  BE-EXT2, $\delta t=10^{-3}$ and $p=3$ with $n=40$ nodes in both $x$ and $y$ directions.} 
\label{fig:DisErr1}
\end{figure}

To analyze the stability of the partitioned scheme using matrix analysis, we numerically investigate the spectral properties of the corresponding time iteration matrix, as described in \cite{Peet2012}. Consider solving a semi-discrete coupled system in a sequential manner, and for the stability analysis assume a homogeneous setting (i.e., zero source terms),
\begin{eqnarray}\nonumber
&\w_t = A_{11} \w + A_{12} \v^* ,\qquad \w(0) = \q_{\mat L},\\\nonumber
&\v_t = A_{22} \v + A_{21} \w^* ,\qquad \v(0) = \q_{\mat R}.
\end{eqnarray}
Here, $A_{11}$ and $A_{22}$ represent the discretizations of the spatial differential operators in the interior of the subdomains, while $A_{12}$ and $A_{21}$ correspond to the discretizations of the interface conditions. 
We construct the time iteration matrix for the partitioned approach in following way. For simplicity, we apply BE for time discretization and a second-order time extrapolation at the interface, using a non-iterated scheme 
\begin{align*}
c\w^{k+1}-c\w^k&=A_{11} \w^{k+1}+	A_{12}(2\v^k-\v^{k-1}),\\
c\v^{k+1}-c\v^k&=A_{22} \v^{k+1}+	A_{21}\w^{k+1},
\end{align*}
where $c=1/\delta t $ and $1\le k\le N_t-1$.  Note that $k$ denotes the time level. Then 
\begin{align*}
(c I-A_{11})\w^{k+1}&=c \w^k+A_{12} (2\v^k-\v^{k-1}),\\
(c I-A_{22})\v^{k+1}&=c \v^k+A_{21} \w^{k+1}.
\end{align*}
Let 
\begin{align*}
M_{\mat L}= c (c I-A_{11})^{-1},&\qquad N_{\mat L}=  (c I-A_{11})^{-1} A_{12},\\
M_{\mat R}= c (c I-A_{22})^{-1},&\qquad N_{\mat R}=  (c I-A_{22})^{-1} A_{21}.
\end{align*}
Then
\begin{align*}
\w^{k+1}&=M_{\mat L} \w^k+N_{\mat L} (2\v^k-\v^{k-1}),\\
\v^{k+1}&=M_{\mat R}\v^k+N_{\mat R} \w^{k+1}.
\end{align*}
After doing some simplifications, the above system in a matrix form is given by
\begin{eqnarray}\label{330}
\begin{bmatrix}\w^{k+1}\\\v^{k+1}\\\w^{k}\\\v^{k}\end{bmatrix}=
\begin{bmatrix}
	M_{\mat L}&2N_{\mat L}& 0&-N_L\\
	N_{\mat R}M_{\mat L}&2N_{\mat R}N_{\mat L}+M_{\mat R}&0&-N_RN_L\\
	I&0&0&0\\
	0&I& 0&0
\end{bmatrix}\begin{bmatrix}\w^{k}\\\v^{k}\\\w^{k-1}\\\v^{k-1}\end{bmatrix}.
\end{eqnarray}
Thus, the partitioned discrete operator can be viewed as the monolithic operator plus a lower-order interface perturbation induced by the partitioning, i.e., by evaluating the interface coupling with extrapolated interface data. Stability of the partitioned scheme can be assessed by studying the spectral radius of its time-iteration matrix,
\begin{align*}\label{BmatExt2}
B=\begin{bmatrix}
	M_{\mat L}&2N_{\mat L}& 0&-N_L\\
	N_{\mat R}M_{\mat L}&2N_{\mat R}N_{\mat L}+M_{\mat R}&0&-N_RN_L\\
	I&0&0&0\\
	0&I& 0&0
\end{bmatrix}.
\end{align*}
Define  the spectral radius of $B$ as
\[\varrho(B)=\{\max |\lambda | : \lambda \text{ is an eigenvalue of} \, B\}. \]
Let $\u^{k}=[\w^{k},\,\v^{k},\,\w^{k-1},\,\v^{k-1}]^T$. Consider two solutions
$\u^{k}$ and $\tilde\u^{k}$ of the same homogeneous recurrence $\u^{k+1}=B\,\u^{k}$ with different initial histories, and define the error (perturbation)
\(
\e^{k}=\u^{k}-\tilde\u^{k}.
\)
Then $\e^{k+1}=B\,\e^{k}$, and hence
\[
\|\e^{k}\|\le \|B^{k}\|\,\|\e^{0}\|.
\]
A necessary and sufficient condition for $\lim_{k\to\infty}\|\e^k\|=0$ for arbitrary $\e^0$ is $\varrho(B)<1$ \cite[Theorem~3.2.5.2]{Horn1985}. While $\varrho(B)<1$ guarantees asymptotic decay, the matrix $B$ may be non-normal, in which case the error may exhibit transient growth before eventually decaying \cite{Trefethen2005}.
\begin{table}[ht!]
\centering
\caption{The spectral radius of the iteration matrices of time iteration matrix for BE-EXT2 for different values of \( \gamma_1, \gamma_2, \delta t \), and \( \delta y\). In the middle column, all parameters are assumed to be the same. }
\begin{tabular}{c|ccccc} 
	\multicolumn{6}{c}{} \\
	\(\gamma_1\) &  2.0 & 1.0 & 0.5& 0.25  &0.125 \\
	\hline 
	$p=1$&0.794745& 0.795275& 0.796328& 0.798413& 0.802494\\
	$p=2$&0.796160 & 0.796373 &0.796800 &0.797650 & 0.799334\\
	$p=3$&0.796695 & 0.796845 &0.797145 &0.797742&0.798929\\
	\multicolumn{6}{c}{} \\
	\(\gamma_2\) & 0.4 & 0.2 & 0.1 & 0.05 &0.01 \\
	\hline
	$p=1$&0.796325 &0.796326 &0.796328 &0.796332 &0.796368\\
	$p=2$&0.796798 &0.796799&0.796800 &0.796803 &0.7968257\\
	$p=3$&0.797143&0.797144 &0.797145&0.797147& 0.797166\\
	\multicolumn{6}{c}{} \\
	\(\delta t\) & 0.004 & 0.002 & 0.001 & 0.0005 & 0.00025 \\
	\hline
	$p=1$&0.494302& 0.661582& 0.796328 &0.886617& 0.939901\\
	$p=2$&0.495030 & 0.662234& 0.796800 &0.886910&0.940066\\
	$p=3$&0.495563 &0.662711 &0.797145&0.887123 &0.940186\\
	\multicolumn{6}{c}{} \\
	\(\delta y\)& 0.04 &  0.035  &  0.03 & 0.025 & 0.02\\
	\hline
	$p=1$ &0.794526 & 0.795557 &0.796328&0.801353&0.801425\\
	$p=2$&0.795107 &0.796078 & 0.796800&0.801210 & 0.801331\\
	$p=3$&0.795697&0.796522 &0.797145 &0.801216& 0.801372
\end{tabular}
\label{tab:var}
\end{table}

Table \ref{tab:var} demonstrates spectral radius of iteration matrices for various values of \( \gamma_1 \), \( \gamma_2=\gamma_2^{\mat L}=\gamma_2^{\mat R}\), \( \delta t \) and \( \delta y \), where we vary one parameter at a time. Note that \( \delta y \) is the mesh resolution at the interface, and \( \rho_{\mat{L}} \) and \( \rho_{\mat{R}} \) are proportional to it. This verifies that when the assumptions of Theorem \ref{thm:BE} hold, the iteration converges across  a wide range of parameters. We note that using time extrapolation at the interface with one order higher than the design accuracy, as well as performing multiple iterations at each time step in the coupled solver, is necessary to recover the design accuracy in partitioned scheme.

In Table \ref{tab:Stab}, we present accuracy of the scheme at interface for the weakly partitioned scheme BE-EXT2 for various values of $\kappa/\epsilon$ and $\delta t/\delta y ^2$, where $\delta y$ is spatial step size at the interface which is assumed to be a vertical line. The test is designed to check the accuracy of the method at interface without adequately changing the mesh resolution. 
We set $p=3$ and $n = 30$, while the time step $\delta t$ varies. With $\epsilon = 1$ held constant, $\kappa$ takes on  the set  $\{0.01, 0.1, 1, 10, 100\}$. $\gamma_1$ and $\gamma_2=\gamma_2^{\mat L}=\gamma_2^{\mat R}$ are chosen such that assumptions \eqref{b1}-\eqref{b4} holds. We report the norm $\Pnorm{}$ of the error at interface at the time $T = 2\delta t$, where a second-order extrapolation is used at the interface, and the maximum number of sub-iteration is set to $\ell = 20$. A result marked with $\times$ indicates that the error failed to satisfy a given tolerance after a given number of sub-iterations. 

The results demonstrate that when the diffusion coefficient in the left subdomain is  larger than that in the right subdomain (i.e., $\kappa/\epsilon < 1$), the scheme struggles with slow convergence and warrants sub-iterations. However, in most CHT applications, the fluid typically has a smaller thermal diffusion coefficient than the solid, making such cases ($\kappa/\epsilon < 1$) less frequent. 

\begin{table}[ht]
\caption{Accuracy of weakly partitioned scheme at interface for various values of $\kappa/\epsilon$ and $\delta t/\delta y^2$.}
\centering\scalebox{1}{
	\begin{tabular}{c l|ccccc} 
		& &&  & $\kappa/\epsilon$ &\\
		& &0.01 & 0.1  & 1 & 10 & 100  \\
		\hline 
		&0.09& $\times$  & 0.000223 &  6.3973e-14 & 0.000185 &0.000323\\
		&0.225& $\times$  & 1.3656e-5 &  1.4872e-12 & 0.000143  &0.000298\\
		$\dfrac{\delta t }{\delta y ^2}$ 
		&0.45&0.043817  &$\times$ &  1.6764e-11 & 7.9223e-5  &  0.000202\\
		&0.9&0.015051  &9.8161e-6 &  1.9046e-10  &3.9750e-6   &0.000114\\
		&1.8&$\times$ & 5.2628e-6  &2.2663e-9  & 2.0774e-6  & 5.5049e-5\\
\end{tabular}}
\label{tab:Stab}
\end{table}

Notice that in the case where \( k/\epsilon < 1 \), we can modify the scheme to stay in the stability region. Consider a coupling approach where the right sub-problem is solved first using the modified SAT terms defined in \eqref{eqS1}, with the assumption $\w^* = \w^{k}$. Subsequently, the left sub-problem is solved with modified SAT terms defined in \eqref{eqS2}, assuming $\v^* = \v^{k+1}$.  Similar stability analyses can be carried out to derive analogous conditional constraints for the specified given SAT parameters.

\section{Conclusion}\label{sec:Summary}
We proposed a weakly coupled partitioned scheme for a model CHT problem involving linear advection–diffusion and heat equations. The scheme integrates first- and second-order time discretization methods with high-order SBP-SATs operators in curvilinear coordinates system for spatial derivatives. We  applied temporal extrapolation at the interface to achieve stability and accuracy across the interface and within the subdomains. Stability is ensured by carefully designing SAT parameters at the interface, enabling the solid and fluid solvers to operate independently while maintaining stable and accurate interactions. 

The analysis presented in this work could potentially be extended to support non-conforming meshes at the interface, allowing more efficient and flexible mesh design in complex multi-physics applications. Another potential direction for future study is to extend the proposed partitioning framework to more practical problems involving turbulent heat transfer problem. This would require incorporating appropriate turbulence models and addressing additional challenges such as increased nonlinearity commonly encountered in realistic engineering applications.


\section*{Acknowledgments}
The authors thank the referees for their careful reading and valuable suggestions.

\section*{Declarations}

\begin{itemize}
	\item {\bf{Funding}} This work was partially funded by Natural Sciences and Engineering Research Council of Canada (NSERC).
	\item {\bf{Conflict of interest/Competing interests}} The authors declare no competing interests.
	\item {\bf{Ethics approval}} Not Applicable 
	\item {\bf{Consent to participate}} Not Applicable
	\item {\bf{Consent for publication}}  Not Applicable
	\item {\bf{Availability of data and materials}} Not Applicable
	\item {\bf{Authors' contributions}} The contribution of each author listed is given below.
    
\emph{Sarah Nataj:} Conceptualization, Methodology, Formal Analysis, Investigation, Software, Validation, Visualization, Writing -Original Draft, Writing -review \& editing.
    
 \emph{David C. Del Rey Fern\'andez:} Conceptualization, Methodology, Formal Analysis, Supervision, Funding Acquisition, Writing -review \& editing.

\emph{David Brown:} Conceptualization, Methodology, Writing -review.

\emph{Rajeev Jaiman:} Conceptualization, Methodology, Writing -review.  
\end{itemize}

%
%


	%
	%
 
\bibliographystyle{elsarticle-num} 
\bibliography{references}

@book {Jaiman2022,
    AUTHOR = {Jaiman, Rajeev Kumar and Joshi, Vaibhav},
     TITLE = {Computational mechanics of fluid-structure
              interaction, computational methods for coupled
              fluid-structure analysis},
 PUBLISHER = {Springer, Singapore},
      YEAR = {2022},
     PAGES = {xv+329},
      ISBN = {978-981-16-5354-4; 978-981-16-5355-1},
   MRCLASS = {74F10 (76-02 76Mxx)},
  MRNUMBER = {4390189},
}

@article {Banks2011,
	AUTHOR = {Banks, J. W. and Sj\"{o}green, B.},
	TITLE = {A normal node stability analysis of numerical interface
	conditions for fluid/structure interaction},
	JOURNAL = {Commun. Comput. Phys.},
	FJOURNAL = {Communications in Computational Physics},
	VOLUME = {10},
	YEAR = {2011},
	NUMBER = {2},
	PAGES = {279--304},
	ISSN = {1815-2406},
	MRCLASS = {65M06 (65M12)},
	MRNUMBER = {2799643},}

@article {Banks2014I,
	AUTHOR = {Banks, J. W. and Henshaw, W. D. and Schwendeman, D. W.},
	TITLE = {An analysis of a new stable partitioned algorithm for {FSI}
	problems. {P}art {I}: {I}ncompressible flow and elastic
	solids},
	JOURNAL = {J. Comput. Phys.},
	FJOURNAL = {Journal of Computational Physics},
	VOLUME = {269},
	YEAR = {2014},
	PAGES = {108--137},
	ISSN = {0021-9991},
	MRCLASS = {65M99 (74F10 76D07 76N99)},
	MRNUMBER = {3197683},}

@article {Banks2014II,
	AUTHOR = {Banks, J. W. and Henshaw, W. D. and Schwendeman, D. W.},
	TITLE = {An analysis of a new stable partitioned algorithm for {FSI}
	problems. {P}art {II}: {I}ncompressible flow and structural
	shells},
	JOURNAL = {J. Comput. Phys.},
	FJOURNAL = {Journal of Computational Physics},
	VOLUME = {268},
	YEAR = {2014},
	PAGES = {399--416},
	ISSN = {0021-9991},
	MRCLASS = {65M70 (74F10 76Bxx 76Dxx)},
	MRNUMBER = {3192450},}

@article {Banks2016,
	AUTHOR = {Banks, J. W. and Henshaw, W. D. and Kapila, A. K. and
	Schwendeman, D. W.},
	TITLE = {An added-mass partition algorithm for fluid-structure
	interactions of compressible fluids and nonlinear solids},
	JOURNAL = {J. Comput. Phys.},
	FJOURNAL = {Journal of Computational Physics},
	VOLUME = {305},
	YEAR = {2016},
	PAGES = {1037--1064},
	ISSN = {0021-9991},
	MRCLASS = {65M06 (74F10 76N99)},
	MRNUMBER = {3429618},}

@article {Causin2005,
	AUTHOR = {Causin, P. and Gerbeau, J. F. and Nobile, F.},
	TITLE = {Added-mass effect in the design of partitioned algorithms for
	fluid-structure problems},
	JOURNAL = {Comput. Methods Appl. Mech. Engrg.},
	FJOURNAL = {Computer Methods in Applied Mechanics and Engineering},
	VOLUME = {194},
	YEAR = {2005},
	NUMBER = {42-44},
	PAGES = {4506--4527},
	ISSN = {0045-7825},
	MRCLASS = {74F10 (74S30 76M25 76Z05 92C35)},
	MRNUMBER = {2157973},}

@article {Hou2012,
	AUTHOR = {Hou, Gene and Wang, Jin and Layton, Anita},
	TITLE = {Numerical methods for fluid-structure interaction, a review},
	JOURNAL = {Commun. Comput. Phys.},
	FJOURNAL = {Communications in Computational Physics},
	VOLUME = {12},
	YEAR = {2012},
	NUMBER = {2},
	PAGES = {337--377},
	ISSN = {1815-2406},
	MRCLASS = {76-02},
	MRNUMBER = {2897143},}

@article {Fernandez2009,
	AUTHOR = {Burman, Erik and Fern\'{a}ndez, Miguel A.},
	TITLE = {Stabilization of explicit coupling in fluid-structure
	interaction involving fluid incompressibility},
	JOURNAL = {Comput. Methods Appl. Mech. Engrg.},
	FJOURNAL = {Computer Methods in Applied Mechanics and Engineering},
	VOLUME = {198},
	YEAR = {2009},
	NUMBER = {5-8},
	PAGES = {766--784},
	ISSN = {0045-7825},
	MRCLASS = {74F10 (76D10)},
	MRNUMBER = {2498525},

}

@article {Svard2014,
	AUTHOR = {Sv\"{a}rd, Magnus and Nordstr\"{o}m, Jan},
	TITLE = {Review of summation-by-parts schemes for
	initial-boundary-value problems},
	JOURNAL = {J. Comput. Phys.},
	FJOURNAL = {Journal of Computational Physics},
	VOLUME = {268},
	YEAR = {2014},
	PAGES = {17--38},
	ISSN = {0021-9991,1090-2716},
	MRCLASS = {65M06 (65M12)},
	MRNUMBER = {3192433},
}

@article {DelRey2014,
	AUTHOR = {Del Rey Fern\'{a}ndez, David C. and Hicken, Jason E. and
	Zingg, David W.},
	TITLE = {Review of summation-by-parts operators with simultaneous approximation terms for the numerical solution of partial differential equations},
	JOURNAL = {Comput. \& Fluids},
	FJOURNAL = {Computers \& Fluids. An International Journal},
	VOLUME = {95},
	YEAR = {2014},
	PAGES = {171--196},
	ISSN = {0045-7930,1879-0747},
	MRCLASS = {65M06},
	MRNUMBER = {3189003},
	MRREVIEWER = {Sergio\ Amat},
}

@article {Mattsson2003,
    AUTHOR = {Mattsson, Ken},
     TITLE = {Boundary procedures for summation-by-parts operators},
   JOURNAL = {J. Sci. Comput.},
  FJOURNAL = {Journal of Scientific Computing},
    VOLUME = {18},
      YEAR = {2003},
    NUMBER = {1},
     PAGES = {133--153},
      ISSN = {0885-7474,1573-7691},
   MRCLASS = {65M06 (76M20)},
  MRNUMBER = {1958938},
}

@article {Mattsson2004b,
	AUTHOR = {Mattsson, Ken and Nordstr\"{o}m, Jan},
	TITLE = {Summation by parts operators for finite difference
	approximations of second derivatives},
	JOURNAL = {J. Comput. Phys.},
	FJOURNAL = {Journal of Computational Physics},
	VOLUME = {199},
	YEAR = {2004},
	NUMBER = {2},
	PAGES = {503--540},
	ISSN = {0021-9991,1090-2716},
	MRCLASS = {65M06},
	MRNUMBER = {2091906},
}

@article {Mattsson2012,
    AUTHOR = {Mattsson, Ken},
     TITLE = {Summation by parts operators for finite difference
              approximations of second-derivatives with variable
              coefficients},
   JOURNAL = {J. Sci. Comput.},
  FJOURNAL = {Journal of Scientific Computing},
    VOLUME = {51},
      YEAR = {2012},
    NUMBER = {3},
     PAGES = {650--682},
      ISSN = {0885-7474,1573-7691},
   MRCLASS = {65M06 (65M12)},
  MRNUMBER = {2914426},
MRREVIEWER = {Tore\ Fl\aa tten},
   
}

@article {Roe2008,
	AUTHOR = {Roe, B. and Jaiman, R. and Haselbacher, A. and Geubelle, P.
	H.},
	TITLE = {Combined interface boundary condition method for coupled
	thermal simulations},
	JOURNAL = {Internat. J. Numer. Methods Fluids},
	FJOURNAL = {International Journal for Numerical Methods in Fluids},
	VOLUME = {57},
	YEAR = {2008},
	NUMBER = {3},
	PAGES = {329--354},
	ISSN = {0271-2091,1097-0363},
	MRCLASS = {80A20 (65M06)},
	MRNUMBER = {2410154},
}

@article {Meng2017,
	AUTHOR = {Meng, F. and Banks, J. W. and Henshaw, W. D. and Schwendeman,
	D. W.},
	TITLE = {A stable and accurate partitioned algorithm for conjugate heat
	transfer},
	JOURNAL = {J. Comput. Phys.},
	FJOURNAL = {Journal of Computational Physics},
	VOLUME = {344},
	YEAR = {2017},
	PAGES = {51--85},
	ISSN = {0021-9991,1090-2716},
	MRCLASS = {65M06 (65M55 74F10 76M20 80A20)},
	MRNUMBER = {3656731},
	MRREVIEWER = {Elena\ Zampieri},
}

@article {DelRey2020p,
	AUTHOR =  {Del Rey Fern\'{a}ndez, David C.  and Carpenter, Mark H. and
	Dalcin, Lisandro and Fredrich, Lucas and Winters, Andrew R.
	and Gassner, Gregor J. and Parsani, Matteo},
	TITLE = {Entropy-stable {$p$}-nonconforming discretizations with the
	summation-by-parts property for the compressible
	{N}avier-{S}tokes equations},
	JOURNAL = {Comput. \& Fluids},
	FJOURNAL = {Computers \& Fluids. An International Journal},
	VOLUME = {210},
	YEAR = {2020},
	PAGES = {104631, 14},
	ISSN = {0045-7930,1879-0747},
	MRCLASS = {65M60 (76N06)},
	MRNUMBER = {4127206},
}

@article {DelRey2019,
	AUTHOR = {Del Rey Fern\'{a}ndez, David C. and Boom, Pieter D. and
	Carpenter, Mark H. and Zingg, David W.},
	TITLE = {Extension of tensor-product generalized and dense-norm
	summation-by-parts operators to curvilinear coordinates},
	JOURNAL = {J. Sci. Comput.},
	FJOURNAL = {Journal of Scientific Computing},
	VOLUME = {80},
	YEAR = {2019},
	NUMBER = {3},
	PAGES = {1957--1996},
	ISSN = {0885-7474,1573-7691},
	MRCLASS = {65M06 (65M60 65M70)},
	MRNUMBER = {3995995},
}

@article {Kazemi2013,
	AUTHOR = {Kazemi-Kamyab, V. and van Zuijlen, A. H. and Bijl, H.},
	TITLE = {A high order time-accurate loosely-coupled solution algorithm
	for unsteady conjugate heat transfer problems},
	JOURNAL = {Comput. Methods Appl. Mech. Engrg.},
	FJOURNAL = {Computer Methods in Applied Mechanics and Engineering},
	VOLUME = {264},
	YEAR = {2013},
	PAGES = {205--217},
	ISSN = {0045-7825,1879-2138},
	MRCLASS = {80A20 (65M06 80M20)},
	MRNUMBER = {3093597},
}

@article {Giles1997,
	AUTHOR = {Giles, M. B.},
	TITLE = {Stability analysis of numerical interface conditions in
	fluid--structure thermal analysis},
	JOURNAL = {Internat. J. Numer. Methods Fluids},
	FJOURNAL = {International Journal for Numerical Methods in Fluids},
	VOLUME = {25},
	YEAR = {1997},
	NUMBER = {4},
	PAGES = {421--436},
	ISSN = {0271-2091,1097-0363},
	MRCLASS = {73K70 (73V20 76M25)},
	MRNUMBER = {1463580},
	
}

@article {Henshaw2009,
	AUTHOR = {Henshaw, William D. and Chand, Kyle K.},
	TITLE = {A composite grid solver for conjugate heat transfer in
	fluid-structure systems},
	JOURNAL = {J. Comput. Phys.},
	FJOURNAL = {Journal of Computational Physics},
	VOLUME = {228},
	YEAR = {2009},
	NUMBER = {10},
	PAGES = {3708--3741},
	ISSN = {0021-9991,1090-2716},
	MRCLASS = {80M25 (65M06 65M08 65M12 76M25)},
	MRNUMBER = {2511073},
	MRREVIEWER = {Reinhard\ Redlinger},
}

@article{Felippa2001,
	title = {Partitioned analysis of coupled mechanical systems},
	journal = {Computer Methods in Applied Mechanics and Engineering},
	volume = {190},
	number = {24},
	pages = {3247-3270},
	year = {2001},
	note = {Advances in Computational Methods for Fluid-Structure Interaction},
	issn = {0045-7825},
	author = {Carlos A. Felippa and K.C. Park and Charbel Farhat},
}

@article{Joshi2014,
	author = {Joshi, Ojas and Leyland, P\'{e}n\'{e}lope},
	title = {Stability Analysis of a Partitioned Fluid–-Structure Thermal Coupling Algorithm},
	journal = {Journal of Thermophysics and Heat Transfer},
	volume = {28},
	number = {1},
	pages = {59-67},
	year = {2014},
}

@article{Peet2012,
	author = {Peet, Yulia T. and Fischer, Paul F.},
	title = {Stability Analysis of Interface Temporal Discretization in Grid Overlapping Methods},
	journal = {SIAM Journal on Numerical Analysis},
	volume = {50},
	number = {6},
	pages = {3375-3401},
	year = {2012},
}

@article{Lee2003,
	author = {In Lee, Jin-Ho Roh and Il-Kwon Oh},
	title = {Aerothermoelastic phenomena of aerospace and composite structures},
	journal = {Journal of Thermal Stresses},
	volume = {26},
	number = {6},
	pages = {525-546},
	year = {2003},
	publisher = {Taylor & Francis},
}

@article {ElKaramany2003,
	AUTHOR = {El-Karamany, Ahmed S.},
	TITLE = {Boundary integral equation formulation in generalized
	thermoviscoelasticity with rheological volume and density in
	material having temperature-dependent properties},
	JOURNAL = {J. Thermal Stresses},
	FJOURNAL = {Journal of Thermal Stresses},
	VOLUME = {26},
	YEAR = {2003},
	NUMBER = {2},
	PAGES = {123--147},
	ISSN = {0149-5739,1521-074X},
	MRCLASS = {74F05 (74D05 74S15)},
	MRNUMBER = {1955447},
}

@article {Lindstrom2010,
	AUTHOR = {Lindstr\"{o}m, Jens and Nordstr\"{o}m, Jan},
	TITLE = {A stable and high-order accurate conjugate heat transfer
	problem},
	JOURNAL = {J. Comput. Phys.},
	FJOURNAL = {Journal of Computational Physics},
	VOLUME = {229},
	YEAR = {2010},
	NUMBER = {14},
	PAGES = {5440--5456},
	ISSN = {0021-9991,1090-2716},
	MRCLASS = {80A20 (65M06)},
	MRNUMBER = {2646148},
}

@article {Carpenter2010,
	AUTHOR = {Carpenter, Mark H. and Nordstr\"{o}m, Jan and Gottlieb, David},
	TITLE = {Revisiting and extending interface penalties for multi-domain
	summation-by-parts operators},
	JOURNAL = {J. Sci. Comput.},
	FJOURNAL = {Journal of Scientific Computing},
	VOLUME = {45},
	YEAR = {2010},
	NUMBER = {1-3},
	PAGES = {118--150},
	ISSN = {0885-7474,1573-7691},
	MRCLASS = {65M60 (65M06 65M12)},
	MRNUMBER = {2679793},
	MRREVIEWER = {Istv\'{a}n\ Farag\'{o}},	
}

@article {Carpenter1999,
    AUTHOR = {Carpenter, Mark H. and Nordstr\"om, Jan and Gottlieb, David},
     TITLE = {A stable and conservative interface treatment of arbitrary
              spatial accuracy},
   JOURNAL = {J. Comput. Phys.},
  FJOURNAL = {Journal of Computational Physics},
    VOLUME = {148},
      YEAR = {1999},
    NUMBER = {2},
     PAGES = {341--365},
      ISSN = {0021-9991,1090-2716},
   MRCLASS = {76M20},
  MRNUMBER = {1669703},
}

@article {Gong2011,
	AUTHOR = {Gong, Jing and Nordstr\"{o}m, Jan},
	TITLE = {Interface procedures for finite difference approximations of
	the advection-diffusion equation},
	JOURNAL = {J. Comput. Appl. Math.},
	FJOURNAL = {Journal of Computational and Applied Mathematics},
	VOLUME = {236},
	YEAR = {2011},
	NUMBER = {5},
	PAGES = {602--620},
	ISSN = {0377-0427,1879-1778},
	MRCLASS = {65M06 (65M12)},
	MRNUMBER = {2853486},
	MRREVIEWER = {M.\ K.\ Kadalbajoo},
	
}

@article {Nordstrom2013,
	AUTHOR = {Nordstr\"{o}m, Jan and Berg, Jens},
	TITLE = {Conjugate heat transfer for the unsteady compressible
	{N}avier-{S}tokes equations using a multi-block coupling},
	JOURNAL = {Comput. \& Fluids},
	FJOURNAL = {Computers \& Fluids. An International Journal},
	VOLUME = {72},
	YEAR = {2013},
	PAGES = {20--29},
	ISSN = {0045-7930,1879-0747},
	MRCLASS = {76N10 (65M06 65M12 76M20 80A20)},
	MRNUMBER = {3035902},
}

@article {Ghasemi2017,
	AUTHOR = {Ghasemi, Fatemeh and Nordstr\"{o}m, Jan},
	TITLE = {Coupling requirements for multiphysics problems posed on two
	domains},
	JOURNAL = {SIAM J. Numer. Anal.},
	FJOURNAL = {SIAM Journal on Numerical Analysis},
	VOLUME = {55},
	YEAR = {2017},
	NUMBER = {6},
	PAGES = {2885--2904},
	ISSN = {0036-1429,1095-7170},
	MRCLASS = {65M06 (35L50 65M12 74F99)},
	MRNUMBER = {3725281},

}

@article {Lundquist2018,
	AUTHOR = {Lundquist, Tomas and Malan, Arnaud and Nordstr\"{o}m, Jan},
	TITLE = {A hybrid framework for coupling arbitrary summation-by-parts
	schemes on general meshes},
	JOURNAL = {J. Comput. Phys.},
	FJOURNAL = {Journal of Computational Physics},
	VOLUME = {362},
	YEAR = {2018},
	PAGES = {49--68},
	ISSN = {0021-9991,1090-2716},
	MRCLASS = {65M08 (65M12 65M50)},
	MRNUMBER = {3774923},
	MRREVIEWER = {Ali\ R.\ Soheili},

}

@article {Burkardt2020,
	AUTHOR = {Burkardt, John and Trenchea, Catalin},
	TITLE = {Refactorization of the midpoint rule},
	JOURNAL = {Appl. Math. Lett.},
	FJOURNAL = {Applied Mathematics Letters. An International Journal of Rapid
	Publication},
	VOLUME = {107},
	YEAR = {2020},
	PAGES = {106438},
	ISSN = {0893-9659,1873-5452},
	MRCLASS = {65L05},
	MRNUMBER = {4092601},
}

@article {Bukac2022,
	AUTHOR = {Buka\v{c}, Martina and Trenchea, Catalin},
	TITLE = {Adaptive, second-order, unconditionally stable partitioned
	method for fluid-structure interaction},
	JOURNAL = {Comput. Methods Appl. Mech. Engrg.},
	FJOURNAL = {Computer Methods in Applied Mechanics and Engineering},
	VOLUME = {393},
	YEAR = {2022},
	PAGES = {Paper No. 114847, 24},
	ISSN = {0045-7825,1879-2138},
	MRCLASS = {74F10},
	MRNUMBER = {4398357},
	MRREVIEWER = {John\ M.\ Stockie},
}

@article {Dahlquist1983,
    AUTHOR = {Dahlquist, Germund G. and Liniger, Werner and Nevanlinna,
              Olavi},
     TITLE = {Stability of two-step methods for variable integration steps},
   JOURNAL = {SIAM J. Numer. Anal.},
  FJOURNAL = {SIAM Journal on Numerical Analysis},
    VOLUME = {20},
      YEAR = {1983},
    NUMBER = {5},
     PAGES = {1071--1085},
      ISSN = {0036-1429},
   MRCLASS = {65L20},
  MRNUMBER = {714701},
MRREVIEWER = {Kevin\ Burrage},     
}

@article {Sun2024,
    AUTHOR = {Sun, Yupeng and Yao, Song and Alexandersen, Joe},
     TITLE = {Topography optimisation using a reduced-dimensional model for
              transient conjugate heat transfer between fluid channels and
              solid plates with volumetric heat source},
   JOURNAL = {Struct. Multidiscip. Optim.},
  FJOURNAL = {Structural and Multidisciplinary Optimization},
    VOLUME = {67},
      YEAR = {2024},
    NUMBER = {4},
     PAGES = {Paper No. 45, 27},
      ISSN = {1615-147X,1615-1488},
   MRCLASS = {74P15 (80A19)},
  MRNUMBER = {4717338},
       
}

@article{Zeng2020,
title = {Topology optimization of heat sinks for instantaneous chip cooling using a transient pseudo-3D thermofluid model},
journal = {International Journal of Heat and Mass Transfer},
volume = {154},
pages = {119681},
year = {2020},
issn = {0017-9310},
author = {Tao Zeng and Hu Wang and Mengzhu Yang and Joe Alexandersen},
}

@article {Cang2022,
    AUTHOR = {Cang, Yu and Hu, Yue and Wang, Lipo and Shen, Yongxing},
     TITLE = {Numerical scheme solving the temperature of the interface
              between gas and heterogeneous solid with phase change},
   JOURNAL = {Internat. J. Numer. Methods Engrg.},
  FJOURNAL = {International Journal for Numerical Methods in Engineering},
    VOLUME = {123},
      YEAR = {2022},
    NUMBER = {4},
     PAGES = {1036--1056},
      ISSN = {0029-5981,1097-0207},
   MRCLASS = {74N99},
  MRNUMBER = {4375326},
  
}

@article {Nordstrom2001,
    AUTHOR = {Nordstr\"om, Jan and Carpenter, Mark H.},
     TITLE = {High-order finite difference methods, multidimensional linear problems, and curvilinear coordinates},
   JOURNAL = {J. Comput. Phys.},
  FJOURNAL = {Journal of Computational Physics},
    VOLUME = {173},
      YEAR = {2001},
    NUMBER = {1},
     PAGES = {149--174},
      ISSN = {0021-9991,1090-2716},
   MRCLASS = {65M06 (65M12)},
  MRNUMBER = {1857624},
    
}

@article {Crean2018,
    AUTHOR = {Crean, Jared and Hicken, Jason E. and Del Rey Fern\'andez,
              David C. and Zingg, David W. and Carpenter, Mark H.},
     TITLE = {Entropy-stable summation-by-parts discretization of the
              {E}uler equations on general curved elements},
   JOURNAL = {J. Comput. Phys.},
  FJOURNAL = {Journal of Computational Physics},
    VOLUME = {356},
      YEAR = {2018},
     PAGES = {410--438},
      ISSN = {0021-9991,1090-2716},
   MRCLASS = {65M06 (65M12 76Nxx)},
  MRNUMBER = {3743647},
MRREVIEWER = {Chenglie\ Hu},
}

@article {Nolasco2020,
    AUTHOR = {Nolasco, Irving Reyna and Dalcin, Lisandro and Del Rey
              Fern\'andez, David C. and Zampini, Stefano and Parsani,
              Matteo},
     TITLE = {Optimized geometrical metrics satisfying free-stream
              preservation},
   JOURNAL = {Comput. \& Fluids},
  FJOURNAL = {Computers \& Fluids. An International Journal},
    VOLUME = {207},
      YEAR = {2020},
     PAGES = {104555, 16},
      ISSN = {0045-7930,1879-0747},
   MRCLASS = {65M99 (65Y20 76-04 76Nxx)},
  MRNUMBER = {4109353},
}

@article{Hicken2008,
author = {Hicken, Jason E. and Zingg, David W.},
title = {Parallel Newton-Krylov Solver for the {E}uler equations Discretized Using Simultaneous Approximation Terms},
journal = {AIAA Journal},
volume = {46},
number = {11},
pages = {2773-2786},
year = {2008},
}

@article {Kozdon2013,
    AUTHOR = {Kozdon, Jeremy E. and Dunham, Eric M. and Nordstr\"om, Jan},
     TITLE = {Simulation of dynamic earthquake ruptures in complex
              geometries using high-order finite difference methods},
   JOURNAL = {J. Sci. Comput.},
  FJOURNAL = {Journal of Scientific Computing},
    VOLUME = {55},
      YEAR = {2013},
    NUMBER = {1},
     PAGES = {92--124},
      ISSN = {0885-7474,1573-7691},
   MRCLASS = {86A17 (65M06 74S20 86A15)},
  MRNUMBER = {3030705},
MRREVIEWER = {T.\ C.\ Mohan},
}

@article {Mattsson2006,
    AUTHOR = {Mattsson, Ken and Nordstr\"om, Jan},
     TITLE = {High order finite difference methods for wave propagation in
              discontinuous media},
   JOURNAL = {J. Comput. Phys.},
  FJOURNAL = {Journal of Computational Physics},
    VOLUME = {220},
      YEAR = {2006},
    NUMBER = {1},
     PAGES = {249--269},
      ISSN = {0021-9991,1090-2716},
   MRCLASS = {65M06},
  MRNUMBER = {2281629},
MRREVIEWER = {Konstantin\ Lipnikov},
}

@article {Carpenter2014,
    AUTHOR = {Carpenter, Mark H. and Fisher, Travis C. and Nielsen, Eric J.
              and Frankel, Steven H.},
     TITLE = {Entropy stable spectral collocation schemes for the
              {N}avier-{S}tokes equations: discontinuous interfaces},
   JOURNAL = {SIAM J. Sci. Comput.},
  FJOURNAL = {SIAM Journal on Scientific Computing},
    VOLUME = {36},
      YEAR = {2014},
    NUMBER = {5},
     PAGES = {B835--B867},
      ISSN = {1064-8275,1095-7197},
   MRCLASS = {65M70 (65M12 65M60)},
  MRNUMBER = {3268619},
}

@article {Strand1994,
    AUTHOR = {Strand, Bo},
     TITLE = {Summation by parts for finite difference approximations for
              {$d/dx$}},
   JOURNAL = {J. Comput. Phys.},
  FJOURNAL = {Journal of Computational Physics},
    VOLUME = {110},
      YEAR = {1994},
    NUMBER = {1},
     PAGES = {47--67},
      ISSN = {0021-9991,1090-2716},
   MRCLASS = {65M06},
  MRNUMBER = {1259900},
}

@article {Carpenter1994,
    AUTHOR = {Carpenter, Mark H. and Gottlieb, David and Abarbanel, Saul},
     TITLE = {Time-stable boundary conditions for finite-difference schemes
              solving hyperbolic systems: methodology and application to
              high-order compact schemes},
   JOURNAL = {J. Comput. Phys.},
  FJOURNAL = {Journal of Computational Physics},
    VOLUME = {111},
      YEAR = {1994},
    NUMBER = {2},
     PAGES = {220--236},
      ISSN = {0021-9991,1090-2716},
   MRCLASS = {65M06 (65N06)},
  MRNUMBER = {1275021},
}

@article {Nordstrom1999,
    AUTHOR = {Nordstr\"om, Jan and Carpenter, Mark H.},
     TITLE = {Boundary and interface conditions for high-order
              finite-difference methods applied to the {E}uler and
              {N}avier-{S}tokes equations},
   JOURNAL = {J. Comput. Phys.},
  FJOURNAL = {Journal of Computational Physics},
    VOLUME = {148},
      YEAR = {1999},
    NUMBER = {2},
     PAGES = {621--645},
      ISSN = {0021-9991,1090-2716},
   MRCLASS = {76M20},
  MRNUMBER = {1669660},
}

@article {DelRey2018,
    AUTHOR = {Del Rey Fern\'andez, David C. and Hicken, Jason E. and Zingg,
              David W.},
     TITLE = {Simultaneous approximation terms for multi-dimensional
              summation-by-parts operators},
   JOURNAL = {J. Sci. Comput.},
  FJOURNAL = {Journal of Scientific Computing},
    VOLUME = {75},
      YEAR = {2018},
    NUMBER = {1},
     PAGES = {83--110},
      ISSN = {0885-7474,1573-7691},
   MRCLASS = {65M06 (65M12)},
  MRNUMBER = {3770313},
}

@article {Hicken2016,
    AUTHOR = {Hicken, Jason E. and Del Rey Fern\'andez, David C. and Zingg,
              David W.},
     TITLE = {Multidimensional summation-by-parts operators: general theory
              and application to simplex elements},
   JOURNAL = {SIAM J. Sci. Comput.},
  FJOURNAL = {SIAM Journal on Scientific Computing},
    VOLUME = {38},
      YEAR = {2016},
    NUMBER = {4},
     PAGES = {A1935--A1958},
      ISSN = {1064-8275,1095-7197},
   MRCLASS = {65N06 (65N12)},
  MRNUMBER = {3519138},
MRREVIEWER = {Juan\ R.\ Torregrosa},
}

@book {Horn1985,
    AUTHOR = {Horn, Roger A. and Johnson, Charles R.},
     TITLE = {Matrix analysis},
 PUBLISHER = {Cambridge University Press, Cambridge},
      YEAR = {1985},
     PAGES = {xiii+561},
      ISBN = {0-521-30586-1},
   MRCLASS = {15-01},
  MRNUMBER = {832183},
MRREVIEWER = {Shao\ Kuan\ Li},
}

@article {Sjogreen2013,
    AUTHOR = {Sj\"ogreen, Bj\"orn and Banks, Jeffrey W.},
     TITLE = {Stability of finite difference discretizations of
              multi-physics interface conditions},
   JOURNAL = {Commun. Comput. Phys.},
  FJOURNAL = {Communications in Computational Physics},
    VOLUME = {13},
      YEAR = {2013},
    NUMBER = {2},
     PAGES = {386--410},
      ISSN = {1815-2406,1991-7120},
   MRCLASS = {65M06 (65M12 76M20)},
  MRNUMBER = {2948022},
}

@Article{Thomas_1979,
  author    = {Thomas, P. D. and Lombard, C. K.},
  title     = {Geometric Conservation Law and Its Application to Flow Computations on Moving Grids},
  journal   = {AIAA Journal},
  year      = {1979},
  volume    = {17},
  number    = {10},
  month     = oct,
  pages     = {1030--1037},
  issn      = {1533-385X},
  publisher = {American Institute of Aeronautics and Astronautics (AIAA)},
}

@Article{Vinokur_2001,
  author    = {Vinokur, M. and Yee, H.C.},
  title     = {Extension of Efficient Low Dissipation High Order Schemes for 3-D Curvilinear Moving Grids},
  year      = {2001},
  month     = dec,
  pages     = {129--164},
  booktitle = {Frontiers of Computational Fluid Dynamics 2002},
  isbn      = {9789812810793},
  publisher = {WORLD SCIENTIFIC},
}

@Article{Nolasco_2020,
  author    = {Nolasco, Irving Reyna and Dalcin, Lisandro and Del Rey Fernández, David C. and Zampini, Stefano and Parsani, Matteo},
  title     = {Optimized geometrical metrics satisfying free-stream preservation},
  journal   = {Computers \&amp; Fluids},
  year      = {2020},
  volume    = {207},
  month     = jul,
  pages     = {104555},
  issn      = {0045-7930},
  publisher = {Elsevier BV},
}

@book {Trefethen2005,
    AUTHOR = {Trefethen, Lloyd N. and Embree, Mark},
     TITLE = {Spectra and pseudospectra},
      NOTE = {The behavior of nonnormal matrices and operators},
 PUBLISHER = {Princeton University Press, Princeton, NJ},
      YEAR = {2005},
     PAGES = {xviii+606},
      ISBN = {978-0-691-11946-5; 0-691-11946-5},
   MRCLASS = {15-02 (15A18 47A10 47A50)},
  MRNUMBER = {2155029},
MRREVIEWER = {David\ Scott\ Watkins},
}

\appendix

\section{Stability for coupled PDEs}\label{apdA}
To show that PDEs on left and right subdomains together with coupling conditions are result in a well-posed problem. For a given $\Omega\subset\Rs^d$, define the inner product
$(\fnc{U},\fnc{V})_\Omega:=\int_{\Omega} \fnc{U}\,\fnc{V}\,dx$
for $\fnc{U},\fnc{V}\in L^2(\Omega)$ and the associated norm
$\|\fnc{U}\|_\Omega^2 := (\fnc{U},\fnc{U})_\Omega=\int_{\Omega} \fnc{U}^2\,dx$.  We calculate the unweighted $L^2$ energy associated with the nondimensionalized variables; with the interface conditions \eqref{eq03c} this choice yields exact cancellation of the interface contributions in the continuous energy estimate.
 We begin with multiplying \eqref{eq03a} with $\fnc{W}$, integrate over the associated domain to get
\begin{align}\label{eq02a}
\begin{aligned}
	\int_{\Omega_L}\fnc{W}\fnc{W}_t\,dx+\int_{\Omega_L}\fnc{W}(\a\cdot \nabla \fnc{W})\,dx=\epsilon \int_{\Omega_L}\fnc{W}\Delta \fnc{W} \,dx.
\end{aligned}
\end{align}
Notice that
\begin{align}\label{eq02b}
\begin{aligned}
	\int_{\Omega_L}\fnc{W}(\a\cdot \nabla \fnc{W})\,dx&=\dfrac12\int_{\partial\Omega_L}(\a\cdot\n_L) \fnc{W}^2 \,ds,
\end{aligned}
\end{align}
assuming $\a$ is constant vector. Also
\begin{align}\label{eq02c}
\begin{aligned}
	\int_{\Omega_L}\fnc{W}\Delta \fnc{W} \,dx= \int_{\partial\Omega_L} \fnc{W}(\n_L\cdot \nabla \fnc{W})\,ds-\int_{\Omega_L}\nabla \fnc{W}\cdot\nabla \fnc{W}\,dx.
\end{aligned}
\end{align}
Substituting \eqref{eq02b} and \eqref{eq02c} in \eqref{eq02a} we obtain
\begin{align*}
\begin{aligned}
	\dfrac12\dfrac{\partial}{\partial t}\int_{\Omega_L}\fnc{W}^2\,dx+\epsilon \int_{\Omega_L}|\nabla \fnc{W}|^2\,dx
	&=-\dfrac12\int_{\partial \Omega_L}(\a\cdot\n_L)\fnc{W}^2 \,ds+\epsilon\int_{\partial \Omega_L}\fnc{W}(\n_L\cdot\nabla \fnc{W})\,ds\\
	&=-\dfrac12\int_{\partial \Omega_L\backslash\Sigma}(\a\cdot\n_L)\fnc{W}^2 \,ds+\epsilon\int_{\partial \Omega_L\backslash\Sigma}\fnc{W}(\n_L\cdot\nabla \fnc{W})\,ds\\
	&\quad-\dfrac12\int_{\Sigma}(\a\cdot\n_L)\fnc{W}^2 \,ds+\epsilon\int_{\Sigma}\fnc{W}(\n_L\cdot\nabla \fnc{W})\,ds.
\end{aligned}
\end{align*}
Using the boundary condition $\zeta \fnc{W}+\epsilon\, \n_L\cdot \nabla \fnc{W}=g(x, t)$, we obtain
\begin{align}\label{eq02d}
\begin{aligned}
	\dfrac12\dfrac{\partial}{\partial t}\int_{\Omega_L}\fnc{W}^2\,dx+\epsilon \int_{\Omega_L}|\nabla \fnc{W}|^2\,dx
	&=-\int_{\partial \Omega_L\backslash\Sigma}\big(\dfrac12\a\cdot \n_L+\zeta\big)\,\fnc{W}^2 \,ds+\int_{\partial \Omega_L\backslash\Sigma} \fnc{W}g\,ds\\
	&\quad-\dfrac12\int_{\Sigma}(\a\cdot\n_L)\fnc{W}^2 \,ds+\epsilon\int_{\Sigma}\fnc{W}(\n_L\cdot\nabla \fnc{W})\,ds.
\end{aligned}
\end{align}
Next multiply \eqref{eq03b} with $\fnc{V}$, integrate over the associated domain and use integration by part to obtain
\begin{align*}
\begin{aligned}
	\int_{\Omega_R} \fnc{V}\fnc{V}_t\,dx&=\kappa \int_{\Omega_R}\fnc{V}\Delta \fnc{V}\,dx
	=\kappa\int_{\partial\Omega_R} \fnc{V}(\n_R\cdot \nabla \fnc{V}) \,ds-\kappa\int_{\Omega_R}\nabla \fnc{V}\cdot\nabla \fnc{V}\,dx\\
	&=\kappa\int_{\partial\Omega_R\backslash\Sigma} \fnc{V}(\n_R\cdot \nabla \fnc{V}) \,ds+\kappa\int_{\Sigma} \fnc{V}(\n_R\cdot \nabla \fnc{V}) \,ds-\kappa\int_{\Omega_R}\nabla \fnc{V}\cdot\nabla \fnc{V}\,dx
\end{aligned}
\end{align*}
Using the boundary condition  $\fnc{V}+\varphi\, \n_R\cdot \nabla \fnc{V}=h(x, t)$, we get
\begin{align}\label{eq02e}
\begin{aligned}
	\dfrac12\dfrac{\partial}{\partial t}\int_{\Omega_R} \fnc{V}^2\,dx+\kappa\int_{\Omega_R}|\nabla \fnc{V}|^2\,dx&=\dfrac{\kappa}{\varphi}\int_{\partial\Omega_R\backslash\Sigma} \fnc{V}h \,ds-\dfrac{\kappa}{\varphi}\int_{\partial\Omega_R\backslash\Sigma}\fnc{V}^2 \,ds+\kappa\int_{\Sigma} \fnc{V}(\n_R\cdot \nabla \fnc{V})  \,ds.
\end{aligned}
\end{align}
Adding Equations \eqref{eq02d} and \eqref{eq02e} we obtain
\begin{align*}
\begin{aligned}
	\dfrac12\dfrac{\partial}{\partial t}\|\fnc{W}\|^2_{\Omega_L}+\epsilon \|\nabla \fnc{W}\|^2_{\Omega_L}&+\dfrac12\dfrac{\partial}{\partial t}\|\fnc{V}\|^2_{\Omega_R}+\kappa\|\nabla \fnc{V}\|^2_{\Omega_R}
	=\int_{\partial \Omega_L\backslash\Sigma} \fnc{W}g\,ds+\dfrac{\kappa}{\varphi}\int_{\partial\Omega_R\backslash\Sigma} \fnc{V}h \,ds\\
	&-\int_{\partial \Omega_L\backslash\Sigma}\big(\dfrac12\a\cdot \n_L+\zeta\big)\,\fnc{W}^2 \,ds-\dfrac{\kappa}{\varphi}\int_{\partial\Omega_R\backslash\Sigma}\fnc{V}^2 \,ds\\
	&-\dfrac12\int_{\Sigma}(\a\cdot\n_L)\fnc{W}^2 \,ds+\epsilon\int_{\Sigma}\fnc{W}(\n_L\cdot\nabla \fnc{W})\,ds+\kappa\int_{\Sigma} \fnc{V}(\n_R\cdot \nabla \fnc{V})  \,ds,
\end{aligned}
\end{align*}
Then by substituting the interface conditions \eqref{eq03c},
we obtain the energy estimate
\begin{align*}
	\begin{aligned}
		\dfrac12\dfrac{\partial}{\partial t}\|\fnc{W}\|^2_{\Omega_L}&+\epsilon \|\nabla \fnc{W}\|^2_{\Omega_L}+\dfrac12\dfrac{\partial}{\partial t}\|\fnc{V}\|^2_{\Omega_R}+\kappa\|\nabla \fnc{V}\|^2_{\Omega_R}\,dx=\int_{\partial \Omega_L\backslash\Sigma} \fnc{W}g\,ds+\dfrac{\kappa}{\varphi}\int_{\partial\Omega_R\backslash\Sigma} \fnc{V}h \,ds\\
		&\quad-\int_{\partial \Omega_L\backslash\Sigma}\big(\dfrac12\a\cdot \n_L+\zeta\big)\,\fnc{W}^2 \,ds-\dfrac{\kappa}{\varphi}\int_{\partial\Omega_R\backslash\Sigma}\fnc{V}^2 \,ds-\dfrac12\int_{\Sigma}(\a\cdot\n_L)\fnc{W}^2 \,ds.
	\end{aligned}
\end{align*}
Now let $\alpha=\big(\dfrac12\a\cdot \n_L+\zeta\big)> 0$ and $\beta=\dfrac{\kappa}{\varphi}> 0$ and assume that at interface we have $\a\cdot\n_L\ge0$. Then
\begin{align*}
	\begin{aligned}
		\dfrac12\dfrac{\partial}{\partial t}\|\fnc{W}\|^2_{\Omega_L}&+\epsilon \|\nabla \fnc{W}\|^2_{\Omega_L}+\dfrac12\dfrac{\partial}{\partial t}\|\fnc{V}\|^2_{\Omega_R}+\kappa\|\nabla \fnc{V}\|^2_{\Omega_R}\,dx\le
		\dfrac{1}{4\alpha}\int_{\partial \Omega_L\backslash\Sigma}g_L^2\,ds+\dfrac{\beta}{4}\int_{\partial \Omega_R\backslash\Sigma}g_R^2\,ds.
	\end{aligned}
\end{align*}
Therefore
\begin{align*}
	\begin{aligned}
		\dfrac{\partial}{\partial t}\|\fnc{W}\|^2_{\Omega_L}+2\epsilon \|\nabla \fnc{W}\|^2_{\Omega_L}+\dfrac{\partial}{\partial t}\|\fnc{V}\|^2_{\Omega_R}+2\kappa\|\nabla \fnc{V}\|^2_{\Omega_R}
		\le \dfrac{1}{2\alpha}\|g\|^2_{\partial \Omega_L\backslash\Sigma}+\dfrac{\beta}{2}\|h\|^2_{\partial \Omega_R\backslash\Sigma}.
	\end{aligned}
\end{align*} 
Notice that this approach works for any 
$\zeta> -\dfrac12\a\,\cdot\, \n_L$, and $ \varphi> 0$.
For simplicity in calculations, we assume 
$\zeta=\dfrac12\big(|\a\,\cdot\, \n_L|-\a\,\cdot\, \n_L\big) $ and $\varphi=\kappa$. 
Additionally, to ensure a meaningful physical interpretation of the interface condition $\a\cdot\n_{L}\ge0$, we restrict our analysis to the case where $\a\,\cdot\,\n_L=0$. Note that if $\a\cdot\n_L\neq 0$, the energy argument contains an additional interface term and the analysis must be adapted accordingly, using an interface treatment consistent with characteristic (inflow/outflow) directions.

\section{Proof the theorem \eqref{thm:BE}}\label{apdB}

Set $\v^*=\v^{k}$ in \eqref{eq12a} and $\w^*=\w^{k+1}$ in \eqref{eq12b}. 
Assume the advection-diffusion equation  without source terms. 
Multiply the it by ${\w^{k+1}}\Tr \Pnorm{}$,  we obtain
{\setlength{\abovedisplayskip}{7pt}
\setlength{\belowdisplayskip}{7pt} 
\begin{align*}
	\begin{aligned}
		{\w^{k+1}}\Tr\Pnorm{}\MJack{\mat L} \delta_{\delta t} \w^{k+1}&+\dfrac12\sum_{l,m=1}^3 a_m{\w^{k+1}}\Tr\Pnorm{}\bigg\{\D{\xi_l}\MJdxildxmk{l}{m}{\mat L}+\MJdxildxmk{l}{m}{\mat L}\D{\xi_l}\bigg\}\w^{k+1}+\gamma_1^{\mat L}{\w^{k+1}}\Tr \Pnorm{}{\mat S}^{{\mat L},1}_{\Sigma}+\gamma_2^{\mat L}{\w^{k+1}}\Tr \Pnorm{}{\mat S}^{{\mat L},2}_{\Sigma}
		\\=&\epsilon \sum_{l,a=1}^3{\w^{k+1}}\Tr\Pnorm{}\D{\xi_l}\Cmat{l,a}{\mat L}\D{\xi_a}\w^{k+1}+\sum_{l=1}^3{\w^{k+1}}\Tr \Pnorm{}{\mat S}_{\alpha_l}^{\mat L}+\sum_{l=2}^3{\w^{k+1}}\Tr \Pnorm{}{\mat S}_{\beta_l}^{\mat L}.
	\end{aligned}
\end{align*}}
Notice that
{\setlength{\abovedisplayskip}{7pt}
\setlength{\belowdisplayskip}{7pt} 
\begin{align*}
	\begin{aligned}
		{\w^{k+1}}^T\Pnorm{}\MJack{\mat L}\delta_{\delta t} \w^{k+1}
		\ge \dfrac1{2\delta t}\|\w^{k+1}\|^2_{\mat L}-\dfrac1{2\delta t}\|\w^k\|^2_{\mat L}.
	\end{aligned}
\end{align*}}
Therefore
{\setlength{\abovedisplayskip}{7pt}
\setlength{\belowdisplayskip}{7pt} 
\begin{align}\label{eq15a}
	\begin{aligned}
		\dfrac1{2\delta t}\|\w^{k+1}\|^2_{\mat L}&+\dfrac12\sum_{l,m=1}^3 a_m{\w^{k+1}}\Tr\Pnorm{}\bigg\{\D{\xi_l}\MJdxildxmk{l}{m}{\mat L}+\MJdxildxmk{l}{m}{\mat L}\D{\xi_l}\bigg\}\w^{k+1}+\gamma_1^{\mat L}{\w^{k+1}}\Tr \Pnorm{}{\mat S}^{{\mat L},1}_{\Sigma}+\gamma_2^{\mat L}{\w^{k+1}}\Tr \Pnorm{}{\mat S}^{{\mat L},2}_{\Sigma}
		\\\le&\dfrac1{2\delta t}\|\w^{k}\|^2_{\mat L}+\epsilon\sum_{l,a=1}^3{\w^{k+1}}\Tr\Pnorm{}\D{\xi_l}\Cmat{l,a}{\mat L}\D{\xi_a}\w^{k+1}+\sum_{l=1}^3{\w^{k+1}}\Tr \Pnorm{}{\mat S}_{\alpha_l}^{\mat L}+\sum_{l=2}^3{\w^{k+1}}\Tr \Pnorm{}{\mat S}_{\beta_l}^{\mat L}.
	\end{aligned}
\end{align}}
Also 	
{\setlength{\abovedisplayskip}{7pt}
\setlength{\belowdisplayskip}{7pt} 
\begin{align}\label{eq15}
	\begin{aligned}
		\dfrac12\sum_{l,m=1}^3a_m{\w^{k+1}}\Tr\Pnorm{}\bigg\{\D{\xi_l}\MJdxildxmk{l}{m}{\mat L}&+\MJdxildxmk{l}{m}{\mat L}\D{\xi_l}\bigg\}\w^{k+1}
		=\dfrac12\sum_{l,m=1}^3a_m{\w^{k+1}}\Tr\big(\Q{\xi_l}+{\Q{\xi_l}}^T\big)\MJdxildxmk{l}{m}{\mat L}\w^{k+1}
\\&=\dfrac12\sum_{l,m=1}^3a_m{\w^{k+1}}\Tr\big(\R{\beta_l}\Tr\Pnorm{\perp\xi_l}\R{\beta_l}-\R{\alpha_l}\Tr\Pnorm{\perp\xi_l}\R{\alpha_l}\big)\MJdxildxmk{l}{m}{\mat L}\w^{k+1}.
	\end{aligned}
\end{align}}
Also we can simplify
{\setlength{\abovedisplayskip}{7pt}
\setlength{\belowdisplayskip}{7pt} 
\begin{align}\label{eq16}
	\begin{aligned}
		\sum_{l,a=1}^3&{\w^{k+1}}\Tr\Pnorm{}\D{\xi_l}\Cmat{l,a}{\mat L}\D{\xi_a}\w^{k+1}=
		\sum_{l,a=1}^3{\w^{k+1}}\Tr\Q{\xi_l}\Cmat{l,a}{\mat L}\D{\xi_a}\w^{k+1}
		\\&=\sum_{l,a=1}^3{\w^{k+1}}\Tr\big(\E{\xi_l}-\Q{\xi_l}\Tr\big)\Cmat{l,a}{\mat L}\D{\xi_a}\w^{k+1}
		\\&=\sum_{l,a=1}^3{\w^{k+1}}\Tr\E{\xi_l}\Cmat{l,a}{\mat L}\D{\xi_a}\w^{k+1}-\sum_{l,a=1}^3\big(\D{\xi_l} \w^{k+1}\big)\Tr\Pnorm{}\Cmat{l,a}{\mat L}\D{\xi_a}\w^{k+1}
		\\&=\sum_{l,a=1}^3{\w^{k+1}}\Tr\big(\R{\beta_l}\Tr\Pnorm{\perp\xi_l}\R{\beta_l}-\R{\alpha_l}\Tr\Pnorm{\perp\xi_l}\R{\alpha_l}\big)\Cmat{l,a}{\mat L}\D{\xi_a}\w^{k+1}-\sum_{l,a=1}^3\big(\D{\xi_l} \w^{k+1}\big)\Tr\Pnorm{}\Cmat{l,a}{\mat L}\D{\xi_a}\w^{k+1}.
	\end{aligned}
\end{align}}
Substitute \eqref{eq15} and \eqref{eq16} in \eqref{eq15a} we obtain
{\setlength{\abovedisplayskip}{7pt}
	\setlength{\belowdisplayskip}{7pt} 
\begin{align}\label{eq18}
\begin{aligned}
\dfrac1{2\delta t}\|\w^{k+1}\|^2_{\mat L}
&+\epsilon\|\D{}\w^{k+1}\|^2_{\mat L}+\gamma_1^{\mat L}{\w^{k+1}}\Tr \Pnorm{}{\mat S}^{{\mat L},1}_{\Sigma}+\gamma_2^{\mat L}{\w^{k+1}}\Tr \Pnorm{}{\mat S}^{{\mat L},2}_{\Sigma}
\\\le&  \dfrac1{2\delta t}\|\w^{k}\|^2_{\mat L}+\sum_{l=1}^3{\w^{k+1}}\Tr \Pnorm{}{\mat S}_{\alpha_l}^{\mat L}+\sum_{l=2}^3{\w^{k+1}}\Tr \Pnorm{}{\mat S}_{\beta_l}^{\mat L}
\\&-\dfrac12\sum_{l,m=1}^3a_m{\w^{k+1}}\Tr\R{\beta_l}\Tr\Pnorm{\perp\xi_l}\R{\beta_l}\MJdxildxmk{l}{m}{\mat L}\w^{k+1}
+\dfrac1{2}\sum_{l,m=1}^3a_m{\w^{k+1}}\Tr\R{\alpha_l}\Tr\Pnorm{\perp\xi_l}\R{\alpha_l}\MJdxildxmk{l}{m}{\mat L}\w^{k+1}
\\&+\epsilon\sum_{l,a=1}^3{\w^{k+1}}\Tr\R{\beta_l}\Tr\Pnorm{\perp\xi_l}\R{\beta_l}\Cmat{l,a}{\mat L}\D{\xi_a}\w^{k+1}-\epsilon\sum_{l,a=1}^3{\w^{k+1}}\Tr\R{\alpha_l}\Tr\Pnorm{\perp\xi_l}\R{\alpha_l}\Cmat{l,a}{\mat L}\D{\xi_a}\w^{k+1}.
\end{aligned}
\end{align}}
where \(\|\D{}\w^{k+1}\|^2_{\mat L}:=\sum_{l,a=1}^3\big(\D{\xi_l} \w^{k+1}\big)\Tr\Pnorm{}\Cmat{l,a}{\mat L}\D{\xi_a}\w^{k+1}.\)
Next we consider terms at left boundaries in the inequality \eqref{eq18}. First consider terms with $\R{\alpha_l}$ in front and associated  SAT terms multiplied with ${\w^{k+1}}\Tr\Pnorm{}$, a direct calculation yields
{\setlength{\abovedisplayskip}{7pt}
	\setlength{\belowdisplayskip}{7pt} 
\begin{align}\label{eq18b}
	\begin{aligned}		\dfrac12\sum_{l,m=1}^3a_m{\w^{k+1}}\Tr&\R{\alpha_l}\Tr\Pnorm{\perp\xi_l}\R{\alpha_l}\MJdxildxmk{l}{m}{\mat L}\w^{k+1}		\\&+\epsilon\sum_{l,a=1}^3{\w^{k+1}}\Tr\R{\alpha_l}\Tr\Pnorm{\perp\xi_l}\R{\alpha_l}\Cmat{l,a}{\mat L}\D{\xi_a}\w^{k+1}
        +\sum_{l=1}^3{\w^{k+1}}\Tr \Pnorm{}{\mat S}_{\alpha_l}^{\mat L}
		\le \sum_{l=1}^3\dfrac1{2\zeta_{\alpha_l}}\left\|\g_{\alpha_l}^{k+1}\right\|^2_{\MSJack{\alpha_l}{\mat L}\Pnorm{\perp\xi_l}},
	\end{aligned}
\end{align}}
where \(\zeta_{\alpha_l}=\max_{j\in {\cal J}_{\alpha_l}}\{{|(\Normal{\alpha_l})_{jj}|}\}\) for \( l=1,2,3.\)\\
Then consider the  terms with $\R{\beta_2}$ and $\R{\beta_3}$ in front and associated SAT terms in \eqref{eq18},
{\setlength{\abovedisplayskip}{7pt}
\setlength{\belowdisplayskip}{7pt} 
	\begin{align}\label{eq18c}
	\begin{aligned}
		-\dfrac12\sum_{l=2}^3\sum_{m=2}^3 a_m{\w^{k+1}}\Tr&\R{\beta_l}\Tr\Pnorm{\perp\xi_l}\R{\beta_l}\MJdxildxmk{l}{m}{\mat L}\w^{k+1}
\\&+\epsilon\sum_{l=2}^3\sum_{a=1}^3{\w^{k+1}}\Tr\R{\beta_l}\Tr\Pnorm{\perp\xi_l}\R{\beta_l}\Cmat{l,a}{\mat L}\D{\xi_a}\w^{k+1}
+\sum_{l=2}^3{\w^{k+1}}\Tr \Pnorm{}{\mat S}_{\beta_l}^{\mat L}
		\le \sum_{l=2}^3\dfrac1{2\zeta_{\beta_l}}\left\|\g_{\beta_l}^{k+1}\right\|^2_{\MSJack{\beta_l}{\mat L}\Pnorm{\perp\xi_l}},
	\end{aligned}
\end{align}}
where \(\zeta_{\beta_l}=\max_{j\in {\cal J}_{\beta_l}}\{{|(\Normal{\beta_l})_{jj}|}\}\) for \(l=2,3.\) Then apply \eqref{eq18b} and \eqref{eq18c}  in \eqref{eq18} and use the assumption at the interface, $\a\cdot \n_{\mat L}=0$, to obtain
{\setlength{\abovedisplayskip}{7pt}
\setlength{\belowdisplayskip}{7pt} 
\begin{align}\label{eq19}
\begin{aligned}
\dfrac1{2\delta t}&\|\w^{k+1}\|^2_{\mat L}+\epsilon\|\D{}\w^{k+1}\|^2_{\mat L }+\gamma_1{\w^{k+1}}\Tr\Pnorm{}\,{\mat S}_{\Sigma}^{L,1}+\gamma_2^{\mat L}{\w^{k+1}}\Tr\Pnorm{}\,{\mat S}_{\Sigma}^{L,2}
\\\le&  \dfrac1{2\delta t}\|\w^{k}\|^2_{\mat L}+\sum_{l=1}^3\dfrac1{2\zeta_{\alpha_l}}\left\|\g_{\alpha_l}^{k+1}\right\|^2_{\MSJack{\alpha_l}{\mat L}\Pnorm{\perp\xi_l}}+\sum_{l=2}^3\dfrac1{2\zeta_{\beta_l}}\left\|\g_{\beta_l}^{k+1}\right\|^2_{\MSJack{\beta_l}{\mat L}\Pnorm{\perp\xi_l}}
+\epsilon\sum_{a=1}^3{\w^{k+1}}\Tr\R{\beta_1}\Tr\Pnorm{\perp\xi_1}\R{\beta_1}\Cmat{1,a}{\mat L}\D{\xi_a}\w^{k+1}.
\end{aligned}
\end{align}
}
Doing the same for the heat equation on right subdomain,
{\setlength{\abovedisplayskip}{7pt}
\setlength{\belowdisplayskip}{7pt} 
\begin{align*}
	\begin{aligned}
		\dfrac1{2\delta t}\|\v^{k+1}\|^2_{\mat R}&+\kappa\|\D{}\v^{k+1}\|^2_{\mat R}
		+\gamma_1{\v^{k+1}}\Tr\Pnorm{}\,{\mat S}_{\Sigma}^{R,1}
		+ \gamma_2^{\mat R}{\v^{k+1}}\Tr\Pnorm{}\,{\mat S}_{\Sigma}^{R,2}		\\&+\sum_{a=1}^3{\v^{k+1}}^T\R{\alpha_1}\Tr\Pnorm{\perp\xi_1}\bigg(\epsilon\R{\beta_1}\Cmat{1,a}{\mat L}\D{\xi_a}\w^{k+1}-\kappa\R{\alpha_1}\Cmat{1,a}{\mat R}\D{\xi_a}\v^{k+1}\bigg)	
		\\\le& \dfrac1{2\delta t}\|\v^{k}\|^2_{\mat R}
		+\sum_{l=1}^3\dfrac1{4}\left\|\h_{\beta_l}^{k+1}\right\|^2_{\MSJack{\beta_l}{\mat R}\Pnorm{\perp\xi_l}}	+\sum_{l=2}^3\dfrac1{4}\left\|\h_{\alpha_l}^{k+1}\right\|^2_{\MSJack{\alpha_l}{\mat R}\Pnorm{\perp\xi_l}}	
		-\kappa \sum_{a=1}^3{\v^{k+1}}^T\R{\alpha_1}\Tr\Pnorm{\perp\xi_1}\R{\alpha_1}\Cmat{1,a}{\mat R}\D{\xi_a}\v^{k+1}.
	\end{aligned}
\end{align*}}
Then
{\setlength{\abovedisplayskip}{7pt}
\setlength{\belowdisplayskip}{7pt} 
\begin{align}\label{eq20}
	\begin{aligned}
		\dfrac1{2\delta t}\|\v^{k+1}\|^2_{\mat R}&+\kappa\|\D{}\v^{k+1}\|^2_{\mat R}
		+\gamma_1{\v^{k+1}}\Tr\Pnorm{}\,{\mat S}_{\Sigma}^{R,1}
		+\gamma_2^{\mat R}{\v^{k+1}}\Tr\Pnorm{}\,{\mat S}_{\Sigma}^{R,2}				
		\\\le&\dfrac1{2\delta t}\|\v^{k}\|^2_{\mat R}
		+\dfrac1{4}\sum_{l=1}^3\left\|\h_{\beta_l}^{k+1}\right\|^2_{\MSJack{\beta_l}{\mat R}\Pnorm{\perp\xi_l}}	+\dfrac1{4}\sum_{l=2}^3\|
		\h_{\alpha_l}^{k+1}\|_{\MSJack{\alpha_l}{\mat R}\Pnorm{\perp\xi_l}}	
		-\epsilon\sum_{a=1}^3{\v^{k+1}}^T\R{\alpha_1}\Tr\Pnorm{\perp\xi_1}\R{\beta_1}\Cmat{1,a}{\mat L}\D{\xi_a}\w^{k+1}.
	\end{aligned}
\end{align}}
Add \eqref{eq19} and \eqref{eq20}, use notation \eqref{data} and simplify  to obtain
{\setlength{\abovedisplayskip}{7pt}
	\setlength{\belowdisplayskip}{7pt} 
	\begin{align}\label{eq21}
		\begin{aligned}
			\|\w^{k+1}\|^2_{\mat L}&+ \|\v^{k+1}\|^2_{\mat R}+2\delta t\,\epsilon\|\D{}\w^{k+1}\|^2_{\mat L}
			+2\delta t\,\kappa\|\D{}\v^{k+1}\|^2_{\mat R}
			\\&\quad+2\delta t\,\gamma_1{\w^{k+1}}\Tr\Pnorm{}\,{\mat S}_{\Sigma}^{\mat L,1}+2\delta t\,\gamma_1{\v^{k+1}}\Tr\Pnorm{}\,{\mat S}_{\Sigma}^{\mat R,1}+2\delta t\gamma_2^{\mat L}{\w^{k+1}}\Tr\Pnorm{}\,{\mat S}_{\Sigma}^{\mat L,2}+2\delta t\gamma_2^{\mat R}{\v^{k+1}}\Tr\Pnorm{}\,{\mat S}_{\Sigma}^{\mat R,2}
			\\&\le \|\w^{k}\|^2_{\mat L} +\|\v^{k}\|^2_{\mat R}+\calG^{k+1}+2\delta t\,\epsilon\left(\R{\beta_1}\w^{k+1}-\R{\alpha_1}\v^{k+1}\right)\Tr\Pnorm{\perp\xi_1}\left(\sum_{a=1}^3\R{\beta_1}\Cmat{1,a}{\mat L}\D{\xi_a}\w^{k+1}\right),
		\end{aligned}
\end{align}}
Now consider the last line of the inequality \eqref{eq21}. By using Cauchy Schwarz, Young inequalities and Lemma \eqref{lemTrace} for left subdomain we obtain
{\setlength{\abovedisplayskip}{7pt}
	\setlength{\belowdisplayskip}{7pt} 
	\begin{align*}
		\begin{aligned}
			&2\left(\R{\beta_1}\w^{k+1}-\R{\alpha_1}\v^{k+1}\right)\Tr\Pnorm{\perp\xi_1}\bigg(\sum_{a=1}^3\R{\beta_1}\Cmat{1,a}{\mat L}\D{\xi_a}\w^{k+1}\bigg)
			\\&\quad\le \dfrac {1}{\rho_{\mat L}}\bigg(\R{\beta_1}\w^{k+1}-\R{\alpha_1}\v^{k+1}\bigg)\Tr\MSJack{\Sigma}{}\Pnorm{\perp\xi_1}\left(\R{\beta_1}\w^{k+1}-\R{\alpha_1}\v^{k+1}\right)
			\\&\quad\qquad\qquad+
			\rho_{\mat L}\bigg(\sum_{a=1}^3 \R{\beta_1}\Cmat{1,a}{\mat L}\D{\xi_a}\w^{k+1}\bigg)\Tr{\MSJack{\Sigma}{}}^{-1}\Pnorm{\perp\xi_1} \bigg(\sum_{a=1}^3\R{\beta_1}\Cmat{1,a}{\mat L}\D{\xi_a}\w^{k+1}\bigg)
			\\&\quad\le\dfrac{1}{\rho_{\mat L}}\big\|\R{\alpha_1}\v^{k+1}-\R{\beta_1}\w^{k+1}\big\|^2_{\Sigma}
			+\|\D{}\w^{k+1}\|^2_{\mat L}.
		\end{aligned}
\end{align*}}
Using  this in  \eqref{eq21}, we obtain
{\setlength{\abovedisplayskip}{7pt}
	\setlength{\belowdisplayskip}{7pt} 
	\begin{align}\label{eq24aa}
		\begin{aligned}
			\|\w^{k+1}\|^2_{\mat L}&+ \|\v^{k+1}\|^2_{\mat R}
			+\delta t\,\epsilon\|\D{}\w^{k+1}\|^2_{\mat L}
			+2\delta t\,\kappa\|\D{}\v^{k+1}\|^2_{\mat R}
			\\&+2\delta t\,\gamma_1{\w^{k+1}}\Tr\Pnorm{}\,{\mat S}_{\Sigma}^{\mat L,1}
			+2\delta t\,\gamma_1{\v^{k+1}}\Tr\Pnorm{}\,{\mat S}_{\Sigma}^{\mat R,1}
			+2\delta t\,\gamma_2^{\mat L}{\w^{k+1}}\Tr\Pnorm{}\,{\mat S}_{\Sigma}^{\mat L,2}
            +2\delta t\,\gamma_2^{\mat R}{\v^{k+1}}\Tr\Pnorm{}\,{\mat S}_{\Sigma}^{\mat R,2}
			\\&\le \|\w^{k}\|^2_{\mat L} +\|\v^{k}\|^2_{\mat R}+\calG^{k+1}
			+\delta t\,\,\dfrac{\epsilon}{\rho_{\mat L}}\big\|\R{\alpha_1}\v^{k+1}-\R{\beta_1}\w^{k+1}\big\|^2_{\Sigma}.
		\end{aligned}
\end{align}}
Consider SAT terms ${\mat S}_{\Sigma}^{\mat L,1}$ and ${\mat S}_{\Sigma}^{\mat R,1}$ together in \eqref{eq24aa}, we have 
{\setlength{\abovedisplayskip}{7pt}
	\setlength{\belowdisplayskip}{7pt} 
	\begin{align}\label{eq1a}
		\begin{aligned}
			I_1=2 \delta t\gamma_1{\w^{k+1}}\Tr\Pnorm{}\,{\mat S}_{\Sigma}^{\mat L,1}+2 \delta \gamma_1{\v^{k+1}}\Tr\Pnorm{}\,{\mat S}_{\Sigma}^{\mat R,1}
			=2\delta t\gamma_1 \calE_1^{k+1}
			+2\delta t\gamma_1(\R{\beta_1}\w^{k+1})\Tr\MSJack{\Sigma}{}\Pnorm{\perp\xi_1}\big(\R{\alpha_1}\v^{k+1}-\R{\alpha_1}\v^k\big),
		\end{aligned}
\end{align}}
using notations \eqref{E1} and \eqref{gamma}. Next consider ${\mat S}_{\Sigma}^{\mat L,2}$ and ${\mat S}_{\Sigma}^{\mat R,2}$ together in \eqref{eq24aa},  to obtain
{\setlength{\abovedisplayskip}{7pt}
	\setlength{\belowdisplayskip}{7pt} 
	\begin{align*}
		\begin{aligned}
			I_2&=2\delta t\gamma_2^{\mat L}{\w^{k+1}}\Tr\Pnorm{}\,{\mat S}_{\Sigma}^{\mat L,1}+2\delta t \gamma_2^{\mat R}{\v^{k+1}}\Tr\Pnorm{}\,{\mat S}_{\Sigma}^{\mat R,1}
			\\&\ge  2\delta t\gamma_2 \calE_2^{k+1}
			-2\delta t\hat \gamma_2 \bigg(\epsilon\sum_{a=1}^3\R{\beta_1}\Cmat{1,a}{\mat L}\D{\xi_a}\w^{k+1}\bigg)\Tr\MSJack{\Sigma}{}^{-1}{}\Pnorm{\perp\xi_1}\bigg(\kappa\sum_{a=1}^3\R{\alpha_1}\Cmat{1,a}{\mat R}\D{\xi_a}\v^{k+1}\bigg)
			\\&\quad+2\delta t\gamma_2\bigg(\epsilon\sum_{a=1}^3\R{\beta_1}\Cmat{1,a}{\mat L}\D{\xi_a}\w^{k+1}\bigg)\Tr\MSJack{\Sigma}{}^{-1}{}\Pnorm{\perp\xi_1}
			\bigg(\kappa\sum_{a=1}^3\R{\alpha_1}\Cmat{1,a}{\mat R}\D{\xi_a}\v^{k+1}-\kappa\sum_{a=1}^3\R{\alpha_1}\Cmat{1,a}{\mat R}\D{\xi_a}\v^{k}\bigg),
		\end{aligned}
\end{align*}}
using notations \eqref{E2} and \eqref{gamma}. This gives:
{\setlength{\abovedisplayskip}{7pt}
	\setlength{\belowdisplayskip}{7pt} 
	\begin{align}\label{eq1b}
		\begin{aligned}
			- I_2 &\le-2 \delta t\, \gamma_2 \calE_2^{k+1}
			+\delta t\, \hat  \gamma_2\bigg\|\epsilon\sum_{a=1}^3\R{\beta_1}\Cmat{1,a}{\mat L}\D{\xi_a}\w^{k+1}\bigg\|_{\bar\Sigma}^2 +\delta t\, \hat \gamma_2\bigg\|\kappa\sum_{a=1}^3\R{\alpha_1}\Cmat{1,a}{\mat R}\D{\xi_a}\v^{k+1}\bigg\|_{\bar\Sigma}^2
			\\&\quad-2 \delta t \gamma_2\bigg(\epsilon\sum_{a=1}^3\R{\beta_1}\Cmat{1,a}{\mat L}\D{\xi_a}\w^{k+1}\bigg)\Tr\MSJack{\Sigma}{}^{-1}{}\Pnorm{\perp\xi_1}
			\bigg(\kappa\sum_{a=1}^3\R{\alpha_1}\Cmat{1,a}{\mat R}\D{\xi_a}\v^{k+1}-\kappa\sum_{a=1}^3\R{\alpha_1}\Cmat{1,a}{\mat R}\D{\xi_a}\v^{k}\bigg)
			\\&\le-2 \delta t\gamma_2 \calE_2^{k+1}
			+\dfrac{\kappa^2\delta t\,\hat \gamma_2}{\rho_{\mat L}}\big\|\D{}\w^{k+1}\big\|_{\mat L}^2 +\dfrac{\kappa^2\delta t\,\hat \gamma_2}{\rho_{\mat R}}\big\|\D{}\v^{k+1}\big\|_{\mat R}^2
			\\&\quad-2 \delta t\,\gamma_2\bigg(\epsilon\sum_{a=1}^3\R{\beta_1}\Cmat{1,a}{\mat L}\D{\xi_a}\w^{k+1}\bigg)\Tr\MSJack{\Sigma}{}^{-1}{}\Pnorm{\perp\xi_1}
			\bigg(\kappa\sum_{a=1}^3\R{\alpha_1}\Cmat{1,a}{\mat R}\D{\xi_a}\v^{k+1}-\kappa\sum_{a=1}^3\R{\alpha_1}\Cmat{1,a}{\mat R}\D{\xi_a}\v^{k}\bigg).
		\end{aligned}
\end{align}}
Substitute \eqref{eq1a}  and \eqref{eq1b} in \eqref{eq24aa} to obtain
{\setlength{\abovedisplayskip}{7pt}
	\setlength{\belowdisplayskip}{7pt} 
	\begin{align*}
		\begin{aligned}
			\|\w^{k+1}\|^2_{\mat L}&+ \|\v^{k+1}\|^2_{\mat R}+2\delta t\,\gamma_1\calE_1^{k+1}+2\delta t\,\gamma_2\calE_2^{k+1}
			+\delta t\,\epsilon (1-\, \dfrac{\epsilon\hat\gamma_2}{\rho_{\mat L}})\|\D{}\w^{k+1}\|^2_{\mat L}
			+\delta t\, \kappa (1-\, \dfrac{\kappa\hat\gamma_2}{\rho_{\mat R}})\|\D{}\v^{k+1}\|^2_{\mat R}
			\\&\le \|\w^{k}\|^2_{\mat L} +\|\v^{k}\|^2_{\mat R}+\calG^{k+1}
			+\delta t\,\dfrac{\epsilon}{\rho_{\mat L}}\left\|\R{\alpha_1}\v^{k+1}-\R{\beta_1}\w^{k+1}\right\|^2_{\Sigma}
			-2\delta t\,\gamma_1\big(\R{\beta_1}\w^{k+1}\big)\Tr\MSJack{\Sigma}{}\Pnorm{\perp\xi_1}\big(\R{\alpha_1}\v^{k+1}-\R{\alpha_1}\v^k\big)	
			\\&\quad-2\delta t\,\gamma_2\bigg(\epsilon\sum_{a=1}^3\R{\beta_1}\Cmat{1,a}{\mat L}\D{\xi_a}\w^{k+1}\bigg)\Tr\MSJack{\Sigma}{}^{-1}\Pnorm{\perp\xi_1}
			\bigg(\kappa\sum_{a=1}^3\R{\alpha_1}\Cmat{1,a}{\mat R}\D{\xi_a}\v^{k+1}-\kappa\sum_{a=1}^3\R{\alpha_1}\Cmat{1,a}{\mat R}\D{\xi_a}\v^{k}\bigg).
		\end{aligned}
\end{align*}}
Assume assumptions \eqref{a1}-\eqref{a3} hold, then 
{\setlength{\abovedisplayskip}{7pt}
	\setlength{\belowdisplayskip}{7pt} 
	\begin{align}\label{eq27a}
		\begin{aligned}
			&\|\w^{k+1}\|^2_{\mat L}+\|\v^{k+1}\|^2_{\mat R}+\delta t\,\gamma_1\calE_1^{k+1}+2\delta t\,\gamma_2\calE_2^{k+1}
			\le \|\w^{k}\|^2_{\mat L} +\|\v^{k}\|^2_{\mat R}+\calG^{k+1}
			-2\delta t\,\gamma_1\big(\R{\beta_1}\w^{k+1}\big)\Tr\MSJack{\Sigma}{}\Pnorm{\perp\xi_1}\big(\R{\alpha_1}\v^{k+1}-\R{\alpha_1}\v^k\big)	
			\\&\quad-2\delta t\,\gamma_2\bigg(\epsilon\sum_{a=1}^3\R{\beta_1}\Cmat{1,a}{\mat L}\D{\xi_a}\w^{k+1}\bigg)\Tr\MSJack{\Sigma}{}^{-1}\Pnorm{\perp\xi_1}
            \bigg(\kappa\sum_{a=1}^3\R{\alpha_1}\Cmat{1,a}{\mat R}\D{\xi_a}\v^{k+1}-\kappa\sum_{a=1}^3\R{\alpha_1}\Cmat{1,a}{\mat R}\D{\xi_a}\v^{k}\bigg).
		\end{aligned}
\end{align}}
Then consider the last term in  the line 2 of above inequality, we have 
{\setlength{\abovedisplayskip}{7pt}
\setlength{\belowdisplayskip}{7pt} 
	\begin{align*}
		\begin{aligned}
			-\big(\R{\beta_1}\w^{k+1}\big)&\Tr\MSJack{\Sigma}{}\Pnorm{\perp\xi_1}\big(\R{\alpha_1}\v^{k+1}-\R{\alpha_1}\v^k\big)	
			\\&=\big(\R{\beta_1}\w^{k+1}-\R{\alpha_1}\v^{k+1}+\R{\alpha_1}\v^{k+1}\big)\Tr\MSJack{\Sigma}{}\Pnorm{\perp\xi_1}\big(\R{\alpha_1}\v^{k}-\R{\alpha_1}\v^{k+1}\big)		
			\\&\quad+\big(\R{\beta_1}\w^{k+1}-\R{\alpha_1}\v^{k+1}\big)\Tr\MSJack{\Sigma}{}\Pnorm{\perp\xi_1}\big(\R{\alpha_1}\v^{k}-\R{\alpha_1}\v^{k+1}\big)
			\\&\le\big(\R{\alpha_1}\v^{k+1}\big)\Tr\MSJack{\Sigma}{}\Pnorm{\perp\xi_1}\R{\alpha_1}\v^{k}-\big(\R{\alpha_1}\v^{k+1}\big)\Tr\MSJack{\Sigma}{}\Pnorm{\perp\xi_1}\R{\alpha_1}\v^{k+1}
			\\&\quad+\dfrac12\big(\R{\beta_1}\v^{k}-\R{\alpha_1}\v^{k+1}\big)\Tr\MSJack{\Sigma}{}\Pnorm{\perp\xi_1}\big(\R{\beta_1}\v^{k}-\R{\alpha_1}\v^{k+1}\big)
			\\&\quad+\dfrac12\big(\R{\beta_1}\w^{k+1}-\R{\alpha_1}\v^{k+1}\big)\Tr\MSJack{\Sigma}{}\Pnorm{\perp\xi_1}\big(\R{\beta_1}\w^{k+1}-\R{\alpha_1}\v^{k+1}\big)
			\\&\le
			\dfrac12\left\|\R{\alpha_1}\v^{k}\right\|^2_{\Sigma}-\dfrac12\left\|\R{\alpha_1}\v^{k+1}\right\|^2_{\Sigma}
			+\dfrac12\left\|\R{\beta_1}\w^{k+1}-\R{\alpha_1}\v^{k+1}\right\|^2_{\Sigma}.
		\end{aligned}
\end{align*}}
Doing the same for last two lines of the inequality \eqref{eq27a} and substitute them back in \eqref{eq27a}, it simplifies to 
{\setlength{\abovedisplayskip}{7pt}
\setlength{\belowdisplayskip}{7pt} 
\begin{align*}
	\begin{aligned}
		\|\w^{k+1}\|^2_{\mat L}+ \|\v^{k+1}\|^2_{\mat R}+\delta t\,\gamma_2\calE_2^{k+1}
		\le& \|\w^{k}\|^2_{\mat L} +\|\v^{k}\|^2_{\mat R}+\calG^{k+1}
		+\delta t\,\gamma_1\left(\left\|\R{\alpha_1}\v^{k}\right\|^2_{\Sigma}
		-\left\|\R{\alpha_1}\v^{k+1}\right\|^2_{\Sigma}\right)
		\\&+\kappa^2\delta t\,\gamma_2\left(
		\bigg\|\sum_{a=1}^3\R{\alpha_1}\Cmat{1,a}{\mat R}\D{\xi_a}\v^{k}\bigg\|^2_{\bar \Sigma}
		-\bigg\|\sum_{a=1}^3\R{\alpha_1}\Cmat{1,a}{\mat R}\D{\xi_a}\v^{k+1}\bigg\|^2_{\bar \Sigma}\right).
	\end{aligned}
\end{align*}}
Consequently
{\setlength{\abovedisplayskip}{7pt}
\setlength{\belowdisplayskip}{7pt} 
\begin{align*}
	\begin{aligned}
		\|\w^{k+1}\|^2_{\mat L}+ \|\v^{k+1}\|^2_{\mat R}
		+\delta t\,\gamma_2\sum_{j=1}^{k+1}\calE_2^{j}
		\le& \|\w^{0}\|^2_{\mat L} +\|\v^{0}\|^2_{\mat R}
		+\sum_{j=1}^{k+1}\calG^{j}
		+\delta t\,\gamma_1\sum_{j=0}^k\left(\left\|\R{\alpha_1}\v^{j}\right\|^2_{\Sigma}
		-\left\|\R{\alpha_1}\v^{j+1}\right\|^2_{\Sigma}\right)
		\\&+\kappa^2\delta t\,\gamma_2\sum_{j=0}^k\left(
		\bigg\|\sum_{a=1}^3\R{\alpha_1}\Cmat{1,a}{\mat R}\D{\xi_a}\v^{j}\bigg\|^2_{\bar \Sigma}
		-\bigg\|\sum_{a=1}^3\R{\alpha_1}\Cmat{1,a}{\mat R}\D{\xi_a}\v^{j+1}\bigg\|^2_{\bar \Sigma}\right).
	\end{aligned}
\end{align*}}
Observe there are telescoping sums, therefore the above simplifies to
{\setlength{\abovedisplayskip}{7pt}
\setlength{\belowdisplayskip}{7pt} 
\begin{align*}
	\begin{aligned}
		\|\w^{k+1}\|^2_{\mat L}+ \|\v^{k+1}\|^2_{\mat R}
		&+\delta t\gamma_2\sum_{j=1}^{k+1}\calE_2^{j}+\delta t\gamma_1\calF_1^{k+1}+\kappa^2\delta t\gamma_2\calF_2^{k+1}
		\\\le&\|\w^0\|^2_{\mat L} +\|\v^0\|^2_{\mat R}+\sum_{j=1}^{k+1}\calG^{j}
		+\delta t\,\gamma_1\left\|\R{\alpha_1}\v^{0}\right\|^2_{\Sigma}+\kappa^2 \delta t\,\gamma_2\sum_{a=1}^3
		\left\|\R{\alpha_1}\Cmat{1,a}{\mat R}\D{\xi_a}\v^{0}\right\|^2_{\bar \Sigma}
		=\sum_{j=0}^{k+1}\calG^{j},
	\end{aligned}
\end{align*}}
using notation \eqref{F}. Notice that $\w^{0}=\q_L$ and $\v^0=\q_R$. Therefore under the assumption \eqref{a1} and \eqref{a3}, we have the stability estimate.

\section{Proof the theorem \eqref{thm:BE-EXT2}}\label{apdC}
\begin{proof}
We begin by multiplying the advection-diffusion equation by ${\w^{k+1}}\Tr \Pnorm{}$ and the heat equation by  ${\v^{k+1}}\Tr \Pnorm{}$. Performing the same initial calculations as in Theorem \ref{thm:BE}, we obtain
{\setlength{\abovedisplayskip}{7pt}
\setlength{\belowdisplayskip}{7pt} 
\begin{align}\label{eq24a}
	\begin{aligned}
	\|\w^{k+1}\|^2_{\mat L}+& \|\v^{k+1}\|^2_{\mat R}+\delta t\epsilon\|\D{}\w^{k+1}\|^2_{\mat L}+2\delta t\kappa\|\D{}\v^{k+1}\|^2_{\mat R}
	+2\delta t\,\gamma_1{\w^{k+1}}\Tr\Pnorm{}\,{\mat S}_{\Sigma}^{L,1}
	+2\delta t\,\gamma_1{\v^{k+1}}\Tr\Pnorm{}\,{\mat S}_{\Sigma}^{R,1}
	\\&+2\delta t\gamma_2^{\mat L}\epsilon{\w^{k+1}}\Tr\Pnorm{}\,{\mat S}_{\Sigma}^{L,2}+2\delta t\gamma_2^{\mat R}\kappa{\v^{k+1}}\Tr\Pnorm{}\,{\mat S}_{\Sigma}^{R,2}
	\le \|\w^{k}\|^2_{\mat L} +\|\v^{k}\|^2_{\mat R}+\calG^{k+1}+\delta t\,\, \dfrac{\epsilon}{\rho_{\mat L}}\big\|\R{\alpha_1}\v^{k+1}-\R{\beta_1}\w^{k+1}\|^2_{\Sigma}
	\end{aligned}
\end{align}}
Let $\v^*=2\v^{k}-\v^{k-1}$ and  $\w^*=\w^{k+1} $ in SAT terms given by \eqref{eqS1} and \eqref{eqS2}. We have 
{\setlength{\abovedisplayskip}{7pt}
\setlength{\belowdisplayskip}{7pt} 
\begin{align}\label{eq25a}
		\begin{aligned}
			I_1&=\gamma_1{\w^{k+1}}\Tr\Pnorm{}\,{\mat S}_{\Sigma}^{L,1}
			+\gamma_1{\v^{k+1}}\Tr\Pnorm{}\,{\mat S}_{\Sigma}^{R,1}
			\\&=\gamma_1{\w^{k+1}}\Tr\R{\beta_1}\Tr\MSJack{\Sigma}{}\Pnorm{\perp\xi_1}\big(\R{\beta_1}\w^{k+1}-2\R{\alpha_1}\v^k+\R{\alpha_1}\v^{k-1}\big)+\gamma_1{\v^{k+1}}\Tr\R{\alpha_1}\Tr\MSJack{\Sigma}{}\Pnorm{\perp\xi_1}\big(\R{\alpha_1}\v^{k+1}-\R{\beta_1}\w^{k+1}\big)	
			\\&=\gamma_1\big(\R{\alpha_1}\v^{k+1}-\R{\beta_1}\w^{k+1}\big)\Tr\MSJack{\Sigma}{}\Pnorm{\perp\xi_1}\big(\R{\alpha_1}\v^{k+1}-\R{\beta_1}\w^{k+1}\big)
			\\&\quad\;+\gamma_1{\w^{k+1}}\Tr\R{\beta_1}\Tr\MSJack{\Sigma}{}\Pnorm{\perp\xi_1}\big(\R{\alpha_1}\v^{k+1}-\R{\alpha_1}\v^k\big)
			\;+\gamma_1{\w^{k+1}}\Tr\R{\beta_1}\Tr\MSJack{\Sigma}{}\Pnorm{\perp\xi_1}\big(\R{\alpha_1}\v^{k-1}-\R{\alpha_1}\v^k\big)
			\\&= \gamma_1\|\R{\alpha_1}\v^{k+1}-\R{\beta_1}\w^{k+1}\|^2_{\Sigma}
			\\&\quad\;+\gamma_1(\R{\beta_1}\w^{k+1})\Tr\MSJack{\Sigma}{}\Pnorm{\perp\xi_1}\big(\R{\alpha_1}\v^{k+1}-\R{\alpha_1}\v^k\big)
			\;+\gamma_1(\R{\beta_1}\w^{k+1})\Tr\MSJack{\Sigma}{}\Pnorm{\perp\xi_1}\big(\R{\alpha_1}\v^{k-1}-\R{\alpha_1}\v^k\big)
		\end{aligned}
\end{align}}
Also, using the same calculations as in \eqref{eq25a} and \eqref{eq1b} we obtain
{\setlength{\abovedisplayskip}{7pt}
\setlength{\belowdisplayskip}{7pt} 
\begin{align}\label{eq25b}
		\begin{aligned}
			-2I_2&=-2\gamma_2^{\mat L}{\w^{k+1}}\Tr\Pnorm{}\,{\mat S}_{\Sigma}^{L,2}-2\gamma_2^{\mat R}{\v^{k+1}}\Tr\Pnorm{}\,{\mat S}_{\Sigma}^{R,2}
			\le-2 \gamma_2 \calE_2^{k+1}
			+\dfrac{\kappa^2\hat \gamma_2}{\rho_{\mat L}}\big\|\D{}\w^{k+1}\big\|_{\mat L}^2 +\dfrac{\kappa^2\hat \gamma_2}{\rho_{\mat R}}\big\|\D{}\v^{k+1}\big\|_{\mat R}^2
			\\&-2\gamma_2\bigg(\epsilon\sum_{a=1}^3\R{\beta_1}\Cmat{1,a}{\mat L}\D{\xi_a}\w^{k+1}\bigg)\Tr\MSJack{\Sigma}{}^{-1}{}\Pnorm{\perp\xi_1}
			\bigg(\kappa\sum_{a=1}^3\R{\alpha_1}\Cmat{1,a}{\mat R}\D{\xi_a}\v^{k+1}-\kappa\sum_{a=1}^3\R{\alpha_1}\Cmat{1,a}{\mat R}\D{\xi_a}\v^{k}\bigg)	\\&-2\gamma_2\bigg(\epsilon\sum_{a=1}^3\R{\beta_1}\Cmat{1,a}{\mat L}\D{\xi_a}\w^{k+1}\bigg)\Tr\MSJack{\Sigma}{}^{-1}{}\Pnorm{\perp\xi_1}
			\bigg(\kappa\sum_{a=1}^3\R{\alpha_1}\Cmat{1,a}{\mat R}\D{\xi_a}\v^{k-1}-\kappa\sum_{a=1}^3\R{\alpha_1}\Cmat{1,a}{\mat R}\D{\xi_a}\v^{k}\bigg).
		\end{aligned}
\end{align}}
Substitute  \eqref{eq25a} and \eqref{eq25b} in \eqref{eq24a} to obtain
{\setlength{\abovedisplayskip}{7pt}
\setlength{\belowdisplayskip}{7pt} 
	\begin{align}\label{g1}
		\begin{aligned}
			&\|\w^{k+1}\|^2_{\mat L}+ \|\v^{k+1}\|^2_{\mat R}+2\delta t\,\gamma_1\calE_1^{k+1}+\delta t\,\gamma_2\calE_2^{k+1}
			+\delta t\,\epsilon (1-\, \dfrac{\epsilon\hat\gamma_2}{\rho_{\mat L}})\|\D{}\w^{k+1}\|^2_{\mat L}
			+\delta t\, \kappa (2-\, \dfrac{\kappa\hat\gamma_2}{\rho_{\mat R}})\|\D{}\v^{k+1}\|^2_{\mat R}
			\\&\le \|\w^{k}\|^2_{\mat L} +\|\v^{k}\|^2_{\mat R}+\calG^{k+1}
			+\delta t\,\dfrac{\epsilon}{\rho_{\mat L}}\left\|\R{\alpha_1}\v^{k+1}-\R{\beta_1}\w^{k+1}\right\|^2_{\Sigma}
			\\&\quad-2\delta t\,\gamma_1{\w^{k+1}}\Tr\R{\beta_1}\Tr\MSJack{\Sigma}{}\Pnorm{\perp\xi_1}\big(\R{\alpha_1}\v^{k+1}-\R{\alpha_1}\v^k\big)	
			-2\delta t\,\gamma_1{\w^{k+1}}\Tr\R{\beta_1}\Tr\MSJack{\Sigma}{}\Pnorm{\perp\xi_1}\big(\R{\alpha_1}\v^{k-1}-\R{\alpha_1}\v^{k}\big)	
			\\&\quad-2\delta t\,\gamma_2\bigg(\epsilon\sum_{a=1}^3\R{\beta_1}\Cmat{1,a}{\mat L}\D{\xi_a}\w^{k+1}\bigg)\Tr\MSJack{\Sigma}{}^{-1}\Pnorm{\perp\xi_1}
			\bigg(\kappa\sum_{a=1}^3\R{\alpha_1}\Cmat{1,a}{\mat R}\D{\xi_a}\v^{k+1}-\kappa\sum_{a=1}^3\R{\alpha_1}\Cmat{1,a}{\mat R}\D{\xi_a}\v^{k}\bigg)
			\\&\quad-2\delta t\,\gamma_2\bigg(\epsilon\sum_{a=1}^3\R{\beta_1}\Cmat{1,a}{\mat L}\D{\xi_a}\w^{k+1}\bigg)\Tr\MSJack{\Sigma}{}^{-1}\Pnorm{\perp\xi_1}
			\bigg(\kappa\sum_{a=1}^3\R{\alpha_1}\Cmat{1,a}{\mat R}\D{\xi_a}\v^{k-1}-\kappa\sum_{a=1}^3\R{\alpha_1}\Cmat{1,a}{\mat R}\D{\xi_a}\v^{k}\bigg).
		\end{aligned}
	\end{align}}
Then consider the term in  line 5 of the above inequality, we have 
{\setlength{\abovedisplayskip}{7pt}
\setlength{\belowdisplayskip}{7pt} 
	\begin{align*}
			\begin{aligned}
				-{\w^{k+1}}\Tr&\R{\beta_1}\Tr\MSJack{\Sigma}{}\Pnorm{\perp\xi_1}\big(\R{\alpha_1}\v^{k-1}-\R{\alpha_1}\v^{k}\big)	
				\\&=\big(\R{\alpha_1}\v^{k-1}\big)\Tr\MSJack{\Sigma}{}\Pnorm{\perp\xi_1}\big(\R{\alpha_1}\v^{k}-\R{\alpha_1}\v^{k-1}\big)	
				+\big(\R{\beta_1}\w^{k+1}-\R{\alpha_1}\v^{k-1}\big)\Tr\MSJack{\Sigma}{}\Pnorm{\perp\xi_1}\big(\R{\alpha_1}\v^{k}-\R{\alpha_1}\v^{k-1}\big)
				\\&\le\big(\R{\alpha_1}\v^{k-1}\big)\Tr\MSJack{\Sigma}{}\Pnorm{\perp\xi_1}\R{\alpha_1}\v^{k}-\big(\R{\alpha_1}\v^{k-1}\big)\Tr\MSJack{\Sigma}{}\Pnorm{\perp\xi_1}\R{\alpha_1}\v^{k-1}
				\\&\quad
				+\dfrac12\big\|\R{\alpha_1}\v^{k}-\R{\alpha_1}\v^{k-1}\big\|^2_{\Sigma}+\dfrac{\rho_{\mat R}}{4}\big\|\R{\beta_1}\w^{k+1}-\R{\alpha_1}\v^{k-1}\big\|^2_{\Sigma}
				+(\dfrac{1}{\rho_{\mat R}}-\dfrac12)\big\|\R{\alpha_1}\v^{k}-\R{\alpha_1}\v^{k-1}\big\|^2_{\Sigma}
				\\&\le\big(\R{\alpha_1}\v^{k-1}\big)\Tr\MSJack{\Sigma}{}\Pnorm{\perp\xi_1}\R{\alpha_1}\v^{k}-\big(\R{\alpha_1}\v^{k-1}\big)\Tr\MSJack{\Sigma}{}\Pnorm{\perp\xi_1}\R{\alpha_1}\v^{k-1}
				\\&\quad+\dfrac{1}{2}\|\R{\alpha_1}\v^{k}\|^2_\Sigma+\dfrac{1}{2}\|\R{\alpha_1}\v^{k-1}\|^2_\Sigma-\big(\R{\alpha_1}\v^{k-1}\big)\Tr\MSJack{\Sigma}{}\Pnorm{\perp\xi_1}\R{\alpha_1}\v^{k}
				\\&\quad+\dfrac{\rho_{\mat R}}{4}\big\|\R{\beta_1}\w^{k+1}-\R{\alpha_1}\v^{k+1}+\R{\alpha_1}\v^{k+1}-\R{\alpha_1}\v^{k-1}\big\|^2_{\Sigma}
				+(\dfrac{1}{\rho_{\mat R}}-\dfrac12)\big\|\R{\alpha_1}\v^{k}-\R{\alpha_1}\v^{k-1}\big\|^2_{\Sigma}
				\\&\le\dfrac{1}{2}\|\R{\alpha_1}\v^{k}\|^2_\Sigma-\dfrac{1}{2}\|\R{\alpha_1}\v^{k-1}\|^2_\Sigma
				+\dfrac{\rho_{\mat R}}{2}\big\|\R{\beta_1}\w^{k+1}-\R{\alpha_1}\v^{k+1}\big\|^2_{\Sigma}
				\\&\quad+\dfrac{\rho_{\mat R}}{2}\big\|\R{\alpha}\v^{k+1}-\R{\alpha_1}\v^{k-1}\big\|^2_{\Sigma}
				+\dfrac12(\dfrac{2}{\rho_{\mat R}}-1)\big\|\R{\alpha_1}\v^{k}-\R{\alpha_1}\v^{k-1}\big\|^2_{\Sigma}.
			\end{aligned}
\end{align*}}
Therefore
{\setlength{\abovedisplayskip}{7pt}
\setlength{\belowdisplayskip}{7pt} 
\begin{align*}
		\begin{aligned}
				-2{\w^{k+1}}\Tr&\R{\beta_1}\Tr\MSJack{\Sigma}{}\Pnorm{\perp\xi_1}\big(\R{\alpha_1}\v^{k-1}-\R{\alpha_1}\v^{k}\big)\le\|\R{\alpha}\v^{k}\|^2_{\Sigma}-\|\R{\alpha}\v^{k-1}\|^2_{\Sigma}
				\\&+\rho_{\mat R}\|\R{\beta_1}\w^{k+1}-\R{\alpha}\v^{k+1}\|^2
				+\rho_{\mat R}\|\R{\alpha_1}\v^{k+1}-\R{\alpha}\v^{k-1}\|^2_{\Sigma}
				+(\dfrac{2}{\rho_{\mat R}}-1)\big\|\R{\alpha_1}\v^{k}-\R{\alpha_1}\v^{k-1}\big\|^2_{\Sigma}.
		\end{aligned}
\end{align*}}
Doing the same kind of calculation for last three lines of the inequality \eqref{g1} and substitute, it simplifies to 
{\setlength{\abovedisplayskip}{7pt}
\setlength{\belowdisplayskip}{7pt}     
		\begin{align*}
			\begin{aligned}
				\|\w^{k+1}\|^2_{\mat L}&+ \|\v^{k+1}\|^2_{\mat R}
				+((1-\rho_{\mat R})\gamma_1-\dfrac{\epsilon }{\rho_{\mat L}})\delta t\calE_1^{k+1}
				+\delta t\,\epsilon (1-\, \dfrac{\epsilon\hat\gamma_2}{\rho_{\mat L}})\D{}\w^{k+1}\|^2_{\mat L}
				+\delta t\, \kappa (2-\, \dfrac{\kappa\hat\gamma_2}{\rho_{\mat R}})\|\D{}\v^{k+1}\|^2_{\mat R}
				\\\le&\|\w^{k}\|^2_{\mat L} +\|\v^{k}\|^2_{\mat R}
				+\calG^{k+1}
				\\&+\delta t\,\gamma_1\left(\left\|\R{\alpha_1}\v^{k}\right\|^2_{\Sigma}
				-\left\|\R{\alpha_1}\v^{k+1}\right\|^2_{\Sigma}\right)
				+\delta t\,\gamma_1\left(\left\|\R{\alpha_1}\v^{k}\right\|^2_{\Sigma}
				-\left\|\R{\alpha_1}\v^{k-1}\right\|^2_{\Sigma}\right)
				\\&+\delta t\,\gamma_1\rho_{\mat R}\left\|\R{\alpha_1}\v^{k+1}-\R{\alpha_1}\v^{k-1}\right\|^2_{\Sigma}
				+\delta t\,\gamma_1(\dfrac{2}{\rho_{\mat R}}-1)\big\|\R{\alpha_1}\v^{k}-\R{\alpha_1}\v^{k-1}\big\|^2_{\Sigma}
				\\&+\delta t\,\kappa^2\gamma_2\left(
				\bigg\|\sum_{a=1}^3\R{\alpha_1}\Cmat{1,a}{\mat R}\D{\xi_a}\v^{k}\bigg\|^2_{\bar \Sigma}
				-\bigg\|\sum_{a=1}^3\R{\alpha_1}\Cmat{1,a}{\mat R}\D{\xi_a}\v^{k+1}\bigg\|^2_{\bar \Sigma}\right)
				\\&+\delta t\,\kappa^2\gamma_2\left(
				\bigg\|\sum_{a=1}^3\R{\alpha_1}\Cmat{1,a}{\mat R}\D{\xi_a}\v^{k}\bigg\|^2_{\bar \Sigma}
				-\bigg\|\sum_{a=1}^3\R{\alpha_1}\Cmat{1,a}{\mat R}\D{\xi_a}\v^{k-1}\bigg\|^2_{\bar \Sigma}\right)
				\\&+\delta t\,\kappa^2\gamma_2\,
				\bigg\|\sum_{a=1}^3\R{\alpha_1}\Cmat{1,a}{\mat R}\D{\xi_a}\v^{k+1}
				-\sum_{a=1}^3\R{\alpha_1}\Cmat{1,a}{\mat R}\D{\xi_a}\v^{k-1}\bigg\|^2_{\bar \Sigma}
				\\&+\delta t\,\kappa^2\gamma_2\,
				\bigg\|\sum_{a=1}^3\R{\alpha_1}\Cmat{1,a}{\mat R}\D{\xi_a}\v^{k}
				-\sum_{a=1}^3\R{\alpha_1}\Cmat{1,a}{\mat R}\D{\xi_a}\v^{k-1}\bigg\|^2_{\bar \Sigma}.
			\end{aligned}
\end{align*}}
We assume $\rho_{\mat R}\le 1$ and \eqref{b1}-\eqref{b4} hold, and use Lemma \eqref{lemTrace}, 
{\setlength{\abovedisplayskip}{7pt}
\setlength{\belowdisplayskip}{7pt} 
	\begin{align*}
			\begin{aligned}
				\|\w^{k+1}\|^2_{\mat L}&+ \|\v^{k+1}\|^2_{\mat R}
				+((1-\rho_{\mat R})\gamma_1-\dfrac{\epsilon }{\rho_{\mat L}})\delta t\calE_1^{k+1}
				\\&+\delta t\,\epsilon (1-\, \dfrac{\epsilon\hat\gamma_2}{\rho_{\mat L}})\D{}\w^{k+1}\|^2_{\mat L}
				+\delta t\, \kappa (2-\, \dfrac{\kappa\hat\gamma_2}{\rho_{\mat R}})\|\D{}\v^{k+1}\|^2_{\mat R}
				\\\le& \|\w^{k}\|^2_{\mat L} +\|\v^{k}\|^2_{\mat R}
				+\calG^{k+1}
				\\&+\delta t\,\gamma_1\left(\left\|\R{\alpha_1}\v^{k}\right\|^2_{\Sigma}
				-\left\|\R{\alpha_1}\v^{k+1}\right\|^2_{\Sigma}\right)
				+\delta t\,\gamma_1\left(\left\|\R{\alpha_1}\v^{k}\right\|^2_{\Sigma}
				-\left\|\R{\alpha_1}\v^{k-1}\right\|^2_{\Sigma}\right)
				\\&+\delta t\,\gamma_1\big(\left\|\v^{k+1}\|_{\mat R}+\|\v^{k-1}\right\|^2_{\mat R}\big)
				+\delta t\,\gamma_1\,\dfrac{1}{\rho_{\mat R}}(\dfrac{2}{\rho_{\mat R}}-1)\big(\left\|\v^{k}\|^2_{\mat R}+\|\v^{k-1}\right\|^2_{\mat R}\big)
				\\&+\delta t\,\kappa^2\gamma_2\left(
				\bigg\|\sum_{a=1}^3\R{\alpha_1}\Cmat{1,a}{\mat R}\D{\xi_a}\v^{k}\bigg\|^2_{\bar \Sigma}
				-\bigg\|\sum_{a=1}^3\R{\alpha_1}\Cmat{1,a}{\mat R}\D{\xi_a}\v^{k+1}\bigg\|^2_{\bar \Sigma}\right)
				\\&+\delta t\,\kappa^2\gamma_2\left(
				\bigg\|\sum_{a=1}^3\R{\alpha_1}\Cmat{1,a}{\mat R}\D{\xi_a}\v^{k}\bigg\|^2_{\bar \Sigma}
				-\bigg\|\sum_{a=1}^3\R{\alpha_1}\Cmat{1,a}{\mat R}\D{\xi_a}\v^{k-1}\bigg\|^2_{\bar \Sigma}\right)
				\\&+\delta t\,\kappa^2\,\dfrac{\gamma_2}{\rho_{\mat R}}\bigg(\big
				\|\D{}\v^{k+1}\big\|^2_{\mat R}
				+\big\|\D{}\v^{k-1}\big\|^2_{\mat R}\bigg)
				+\delta t\,\kappa^2\,\dfrac{\gamma_2}{\rho_{\mat R}}
				\bigg(\big
				\|\D{}\v^{k}\big\|^2_{\mat R}
				+\big\|\D{}\v^{k-1}\big\|^2_{\mat R}\bigg).
			\end{aligned}
\end{align*}}
Then  simplify and again by assuming that  conditions \eqref{b1}-\eqref{b4} hold, we have
{\setlength{\abovedisplayskip}{7pt}
\setlength{\belowdisplayskip}{7pt}  
\begin{align*}
		\begin{aligned}
			\|\w^{k+1}\|^2_{\mat L}&+ (1-\delta t\gamma_1) \|\v^{k+1}\|^2_{\mat R} 
			+((1-\rho_{\mat R})\gamma_1-\dfrac{\epsilon }{\rho_{\mat L}})\delta t\calE_1^{k+1}
			\\&+\delta t\,\epsilon (1-\, \dfrac{\epsilon\hat\gamma_2}{\rho_{\mat L}})\|\D{}\w^{k+1}\|^2_{\mat L}
			+\delta t\, \kappa (2-\,\dfrac{\kappa\gamma_2}{\rho_{\mat R}}-\, \dfrac{\kappa\hat\gamma_2}{\rho_{\mat R}})\|\D{}\v^{k+1}\|^2_{\mat R}
			\\\le&\|\w^{k}\|^2_{\mat L} +(1-\delta t\gamma_1)\|\v^{k}\|^2_{\mat R} +\calG^{k+1}
			\\&+\delta t\,\gamma_1\left(\left\|\R{\alpha_1}\v^{k}\right\|^2_{\Sigma}
			-\left\|\R{\alpha_1}\v^{k+1}\right\|^2_{\Sigma}\right)
			+\delta t\,\gamma_1\left(\left\|\R{\alpha_1}\v^{k}\right\|^2_{\Sigma}
			-\left\|\R{\alpha_1}\v^{k-1}\right\|^2_{\Sigma}\right)
			\\&+\delta t\,\kappa^2\gamma_2\left(
			\bigg\|\sum_{a=1}^3\R{\alpha_1}\Cmat{1,a}{\mat R}\D{\xi_a}\v^{k}\bigg\|^2_{\bar \Sigma}
			-\bigg\|\sum_{a=1}^3\R{\alpha_1}\Cmat{1,a}{\mat R}\D{\xi_a}\v^{k+1}\bigg\|^2_{\bar \Sigma}\right)
			\\&+\delta t\,\kappa^2\gamma_2\left(
			\bigg\|\sum_{a=1}^3\R{\alpha_1}\Cmat{1,a}{\mat R}\D{\xi_a}\v^{k}\bigg\|^2_{\bar \Sigma}
			-\bigg\|\sum_{a=1}^3\R{\alpha_1}\Cmat{1,a}{\mat R}\D{\xi_a}\v^{k-1}\bigg\|^2_{\bar \Sigma}\right)
			\\&
			+\delta t\,\dfrac{2\gamma_1}{\rho^2_{\mat R}}\left(\|\v^{k-1}\|^2_{\mat R}+\|\v^{k}\|^2_{\mat R}\right)
			+\delta t\,\kappa^2\,\dfrac{2\gamma_2}{\rho_{\mat R}}
			\left(\big\|\D{}\v^{k-1}\big\|^2_{\mat R}+\big\|\D{}\v^{k}\big\|^2_{\mat R}\right).
		\end{aligned}
\end{align*}}
Consequently
{\setlength{\abovedisplayskip}{7pt}
\setlength{\belowdisplayskip}{7pt}  
	\begin{align*}
		\begin{aligned}
			\|\w^{k+1}&\|^2_{\mat L}+ (1-\delta t\gamma_1) \|\v^{k+1}\|^2_{\mat R} 
			+((1-\rho_{\mat R})\gamma_1-\dfrac{\epsilon }{\rho_{\mat L}})\delta t\sum_{k=2}^{k+1}\calE_1^{j}
			+\delta t\,\epsilon (1-\, \dfrac{\epsilon\hat\gamma_2}{\rho_{\mat L}})\|D\w^{k+1}\|^2_{\mat L}
			+\delta t\, \kappa (2-\,\dfrac{\kappa\gamma_2}{\rho_{\mat R}}-\, \dfrac{\kappa\hat\gamma_2}{\rho_{\mat R}})\|\D{}\v^{k+1}\|^2_{\mat R}
			\\&\le \|\w^{1}\|^2_{\mat L} +\|\v^{1}\|^2_{\mat R} 
			+\sum_{j=2}^{k+1}\calG^{j}
			+\delta t\,\gamma_1\sum_{j=1}^k\left(\left\|\R{\alpha_1}\v^{j}\right\|^2_{\Sigma}
			-\left\|\R{\alpha_1}\v^{j+1}\right\|^2_{\Sigma}\right)
			+\delta t\,\gamma_1\sum_{j=1}^k\left(\left\|\R{\alpha_1}\v^{j}\right\|^2_{\Sigma}
			-\left\|\R{\alpha_1}\v^{j-1}\right\|^2_{\Sigma}\right)
			\\&\qquad\qquad\qquad\qquad\qquad\qquad+\delta t\,\kappa^2\gamma_2\sum_{j=1}^k\left(
			\bigg\|\sum_{a=1}^3\R{\alpha_1}\Cmat{1,a}{\mat R}\D{\xi_a}\v^{j}\bigg\|^2_{\bar \Sigma}
			-\bigg\|\sum_{a=1}^3\R{\alpha_1}\Cmat{1,a}{\mat R}\D{\xi_a}\v^{j+1}\bigg\|^2_{\bar \Sigma}\right)
			\\&\qquad\qquad\qquad\qquad\qquad\qquad+\delta t\,\kappa^2\gamma_2\sum_{j=1}^k\left(
			\bigg\|\sum_{a=1}^3\R{\alpha_1}\Cmat{1,a}{\mat R}\D{\xi_a}\v^{j}\bigg\|^2_{\bar \Sigma}
			-\bigg\|\sum_{a=1}^3\R{\alpha_1}\Cmat{1,a}{\mat R}\D{\xi_a}\v^{j-1}\bigg\|^2_{\bar \Sigma}\right)
			\\&\qquad\qquad\qquad\qquad\qquad\qquad+\delta t\,\dfrac{2\gamma_1}{\rho^2_{\mat R}}\sum_{j=1}^k \left(\|\v^{j-1}\|^2_{\mat R}+\|\v^{j}\|^2_{\mat R}\right)+\delta t\,\kappa^2\,\dfrac{2\gamma_2}{\rho_{\mat R}}\sum_{j=1}^k\left(\|\D{}\v^{j-1}\|^2_{\mat R}+\|\D{}\v^{j}\|^2_{\mat R}\right).
		\end{aligned}
\end{align*}}
Since there are telescoping sums, the above simplifies to
{\setlength{\abovedisplayskip}{7pt}
\setlength{\belowdisplayskip}{7pt}  
\begin{align*}
		\begin{aligned}
			\|\w^{k+1}&\|^2_{\mat L}+ (1-\delta t\gamma_1)\|\v^{k+1}\|^2_{\mat R}
			+\delta t\,\gamma_1(\calF_1^0+\calF_1^{k+1})
			+\delta t\,\kappa^2\gamma_2(\calF_2^{0}+\calF_2^{k+1})
			\\&\quad+((1-\rho_{\mat R})\gamma_1-\dfrac{\epsilon }{\rho_{\mat L}})\delta t\sum_{k=2}^{k+1}\calE_1^{j}
			+\delta t\,\epsilon (1-\, \dfrac{\epsilon\hat\gamma_2}{\rho_{\mat L}})\|\D{}\w^{k+1}\|^2_{\mat L}
			+\delta t\, \kappa (2-\,\dfrac{\kappa\gamma_2}{\rho_{\mat R}}-\, \dfrac{\kappa\hat\gamma_2}{\rho_{\mat R}})\|\D{}\v^{k+1}\|^2_{\mat R}
			\\&\le \|\w^{1}\|^2_{\mat L} +\|\v^{1}\|^2_{\mat R} +\sum_{j=2}^{k+1}\calG^{j}
			+\delta t\,\gamma_1\left\|\R{\alpha_1}\v^{1}\right\|^2_{\Sigma}
			+\delta t\,\gamma_1\left\|\R{\alpha_1}\v^{k}\right\|^2_{\Sigma}
			\\&\quad+\delta t\,\kappa^2\gamma_2
			\bigg\|\sum_{a=1}^3\R{\alpha_1}\Cmat{1,a}{\mat R}\D{\xi_a}\v^{1}\bigg\|^2_{\bar \Sigma}
			+\delta t\,\kappa^2\gamma_2
			\bigg\|\sum_{a=1}^3\R{\alpha_1}\Cmat{1,a}{\mat R}\D{\xi_a}\v^{k}\bigg\|^2_{\bar \Sigma}
			\\&\quad+\delta t\,\dfrac{2\gamma_1}{\rho^2_{\mat R}}\sum_{j=1}^k \left(\|\v^{j-1}\|^2_{\mat R}+\|\v^{j}\|^2_{\mat R}\right)+\delta t\,\kappa^2\,\dfrac{2\gamma_2}{\rho_{\mat R}}\sum_{j=1}^k\left\|\D{}\v^{j-1}\|^2_{\mat R}+\|\D{}\v^{j}\|^2_{\mat R}\right).
		\end{aligned}
\end{align*}}
Using Lemma \eqref{lemTrace} and summing up
{\setlength{\abovedisplayskip}{7pt}
\setlength{\belowdisplayskip}{7pt}  
\begin{align*}
		\begin{aligned}
			\|\w^{k+1}&\|^2_{\mat L}+ (1-\delta t\gamma_1)\|\v^{k+1}\|^2_{\mat R}
			+\delta t\,\gamma_1(\calF_1^0+\calF_1^{k+1})
			+\delta t\,\kappa^2\gamma_2(\calF_2^{0}+\calF_2^{k+1})
			\\&\quad+((1-\rho_{\mat R})\gamma_1-\dfrac{\epsilon }{\rho_{\mat L}})\delta t\sum_{k=2}^{k+1}\calE_1^{j}
			+\delta t\,\epsilon (1-\, \dfrac{\epsilon\hat\gamma_2}{\rho_{\mat L}})\|\D{}\w^{k+1}\|^2_{\mat L}
			+\delta t\, \kappa (2-\,\dfrac{\kappa\gamma_2}{\rho_{\mat R}}-\, \dfrac{\kappa\hat\gamma_2}{\rho_{\mat R}})\|\D{}\v^{k+1}\|^2_{\mat R}
			\\&\;\le \|\w^{1}\|^2_{\mat L} +\|\v^{1}\|^2_{\mat R} +\sum_{j=2}^{k+1}\calG^{j}
			+\delta t\, \,\dfrac{4\gamma_1}{\rho_{\mat R}^2}\sum_{j=1}^k\left\|\v^{j}\right\|^2_{\mat R}
			+\delta t\,\kappa^2\,\dfrac{4\gamma_2}{\rho_{\mat R}}\sum_{j=1}^k\big\|\D{}\v^{j}\big\|^2_{\mat R}.
		\end{aligned}
\end{align*}}
Notice that by theorem \eqref{thm:BE} under assumption \eqref{b3}
{\setlength{\abovedisplayskip}{7pt}
\setlength{\belowdisplayskip}{7pt}  
\begin{align*}
		\begin{aligned}
			& \|\w^{1}\|^2_{\mat L}+ \|\v^{1}\|^2_{\mat R}+\delta t\gamma_1\calE_1^{1}+\delta t\gamma_2\calE_2^{1}
			+\delta t \gamma_1\calF_1^0+\kappa^2\delta t\gamma_2\calF_2^0\le \calG^{1}+\calG^{0}.
		\end{aligned}
\end{align*}}
Assuming this and assumptions \eqref{b1}-\eqref{b3} holds, then we obtain
{\setlength{\abovedisplayskip}{7pt}
\setlength{\belowdisplayskip}{7pt}  
\begin{align*}
		\begin{aligned}
			\|\w^{k+1}\|^2_{\mat L}+ (1-\delta t\gamma_1)\|\v^{k+1}\|^2_{\mat R}
			&+\delta t\, \kappa (2-\,\dfrac{\kappa\gamma_2}{\rho_{\mat R}}-\, \dfrac{\kappa\hat\gamma_2}{\rho_{\mat R}})\|\D{}\v^{k+1}\|^2_{\mat R}
			\\&\quad\le \mathcal{S}^{k+1}
			+(1-\delta t\gamma_1)\sum_{j=1}^k\left\|\v^{j}\right\|^2_{\mat R}
			+\kappa\delta t\,(2-\,\dfrac{\kappa\gamma_2}{\rho_{\mat R}}-\, \dfrac{\kappa\hat\gamma_2}{\rho_{\mat R}})\sum_{j=1}^k\big\|\D{}\v^{j}\big\|^2_{\mat R}.
		\end{aligned}
\end{align*}}
where $\mathcal{S}^{j}=\sum_{i=0}^j\calG^{i}$. Therefore
{\setlength{\abovedisplayskip}{7pt}
\setlength{\belowdisplayskip}{7pt}  
\begin{align*}
		\begin{aligned}
			\|\w^{k+1}\|^2_{\mat L}+ (1-\delta t\gamma_1)\|\v^{k+1}\|^2_{\mat R}
			\le \mathcal{H}^{k+1},
		\end{aligned}
\end{align*}}
where
\[\mathcal{H}^{k+1}=\mathcal{S}^{k+1}+(k+1)\mathcal{S}^1+\sum_{j=1}^{k}(k+1-j)\mathcal{S}^j,\]
assuming that the conditions \eqref{b1}-\eqref{b4} hold.
\end{proof}

\end{document}